\newcommand{\editdate}{August 1, 2018}
\newcommand{\Lra}{\Longrightarrow}
\makeatother\newcommand{\lesarrow}[1][{}]{\ar@`{[r],[d],[lll],[dlll]}[dll]|{#1}}\makeatletter
\newtheorem{proposition}[equation]{Proposition}
\newtheorem{theorem}[equation]{Theorem}
\newtheorem{corollary}[equation]{Corollary}
\newtheorem{lemma}[equation]{Lemma}
\theoremstyle{definition}
\newtheorem{definitions}[equation]{Definitions}
\newtheorem{remark}[equation]{Remark}
\newtheorem{remarks}[equation]{Remarks}
\newtheorem{example}[equation]{Example}
\newtheorem{question}[equation]{Question}
\newtheorem{inductive-hypotheses}[equation]{Inductive Hypotheses}
\newcommand{\C}{\mathbb{C}} 
\newcommand{\N}{\mathbb{N}} 
\newcommand{\R}{\mathbb{R}}
\newcommand{\Z}{\mathbb{Z}} 
\newcommand{\acts}{\mathbin{\raisebox{-.5pt}{\reflectbox{\begin{sideways}
$\circlearrowleft$\end{sideways}}}}}
\DeclareMathOperator{\Fred}{Fred}
\numberwithin{figure}{section}
\numberwithin{table}{section}
\numberwithin{equation}{section}
\let\c@equation\c@figure
\let\c@table\c@figure
\let\c@algorithm\c@figure
\newcommand{\ouremph}[1]{{\bf #1}}
\begin{document}

\title[Mayer-Vietoris sequences and equivariant $K$-theory rings of toric varieties]{Mayer-Vietoris sequences and equivariant\\ $K$-theory rings of toric varieties\\ \editdate}

\author{Tara S. Holm} 
\address{Department of Mathematics, Cornell University, Ithaca, New York 14853-4201, USA}
\email{tsh@math.cornell.edu}
\thanks{TH was partially supported by the Simons Foundation through Grants \#266377 and \#79064 and by the National Science Foundation through Grant \#DMS--1711317.}

\author{Gareth Williams} 
\address{School of Mathematics and Statistics, The Open University, Walton Hall, Milton Keynes MK7~6AA, UK} 
\email{g.r.williams@open.ac.uk}
\thanks{GW was partially supported by a Research in Pairs grant from the London Mathematical Society.}

\keywords{Toric variety, fan, equivariant $K$-theory, piecewise Laurent polynomial}
\subjclass[2010]{Primary: 19L47; Secondary: 55N15, 55N91, 14M25, 57R18}

\begin{abstract}
We apply a Mayer-Vietoris sequence argument to identify the Atiyah-Segal 
equivariant complex $K$-theory rings of certain toric varieties with rings of 
integral piecewise Laurent polynomials on the associated fans. We provide 
necessary and sufficient conditions for this identification to hold for toric 
varieties of complex dimension $2$, including smooth and singular cases. 
We prove that it always holds for smooth toric varieties, regardless of whether 
or not the fan is polytopal or complete. Finally, we introduce the notion of fans 
with ``distant singular cones," and prove that the identification holds for them. 
The identification has already been made by Hararda, Holm, Ray and Williams 
in the case of divisive weighted projective spaces; in addition to enlarging the 
class of toric varieties for which the identification holds, this work provides an 
example in which the identification fails. We make every effort to ensure that 
our work is rich in examples.
\end{abstract}

\maketitle

\section{Background and notation}\label{section:background}

Toric varieties are an important class of examples in symplectic and algebraic geometry.  Their explicit definition and combinatorial properties mean that their invariants are amenable to direct calculation. They are an important testing ground for conjectures and theories. In this paper, we use elementary tools to explore the topological equivariant $K$-theory rings of toric varieties. The goal is to find a large class of toric varieties for which we may identify this $K$-theory with rings of piecewise Laurent polynomials. We begin with a quick overview of where our work fits in the current literature.

Let $G$ be a compact Lie group and $G\acts Y$ a $G$-space, which we commonly abbreviate to $Y$. Our aim is to consider the $G$-equivariant complex $K$-theory rings $K^*_G(Y)$ for certain $Y$ in the case when $G$ is a torus. This work may be considered as a broadening of the results in \cite{HHRW}, though it is not a direct extension. In part, this is because there are $Y$ for which results of \cite{HHRW} but not the present paper apply, and yet other $Y$ for which the results of the present paper but not \cite{HHRW} apply (though there is a large family of $Y$, namely smooth, polytopal toric varieties, for which the results of both papers apply). The main tool of the current paper, namely the Mayer-Vietoris sequence, is fundamentally different from, and simpler than, the techniques developed in \cite{HHRW}. The other notable difference is that the present paper is concerned solely with equivariant $K$-theory, whereas \cite{HHRW} is one of a number of papers \cite{BFR,BNSS,BSS,MF} to consider other complex oriented equivariant cohomology theories.

Given the plurality of $K$-theory functors and results for algebraic vector bundles and algebraic $K$-theory, it is important to keep in mind precisely which $K$-theory rings we consider. We are concerned with the unreduced Atiyah-Segal $G$-equivariant ring $K^*_G(Y)$ \cite{Seg68}, graded over the integers. For compact $Y$, $K_G^0(Y)$ is constructed from equivalence classes of $G$-equivariant complex vector bundles; otherwise, it is given by equivariant homotopy classes $[Y,\Fred(\mathcal{H}_G)]_G$, where $\mathcal{H}_G$ is a Hilbert space containing infinitely many copies of each irreducible representation of $G$ \cite{AtiSeg04}. For the $1$-point space $*$ with trivial $G$-action, we write the \ouremph{coefficient ring} $K_G^*(*)$ as $K_G^*$. It is isomorphic to $R(G)[z,z^{-1}]$, where $R(G)$ denotes the complex representation ring of $G$, and realises $K_G^0$; the \ouremph{Bott periodicity element} $z$ has cohomological dimension $-2$. The equivariant projection $Y\to *$ induces the structure of a graded $K_G^*$-algebra on $K^*_G(Y)$, for any $G\acts Y$.

We consider $G\acts Y$ in the case that $Y$ is a toric variety and $G$ is a suitable torus. Specifically, we consider the $2n$-dimensional toric variety  $X_\Sigma$ where $\Sigma$ is a \ouremph{fan} in $N_{\R} =N\otimes_{\Z}\R \cong \R^n$, with respect to the lattice $N$. The compact $n$-torus $T=S^1\times\cdots\times S^1 = (N\otimes_{\Z}\R)/N$ acts on $X_\Sigma$. In the context of this $G\acts Y$, there is much \cite{anpa:okt, bri:spa,dup:fpv,kan:ctp,kly:ebt,liya:ste,mor:ktt,pay:tvb,vevi:hak} in the literature regarding algebraic bundles, results in algebraic and operational $K$-theory, and the relationships between these results. For example Vezzosi and Vistoli \cite{vevi:hak} computed equivariant algebraic $K$-theory for smooth toric varieties, and the comparison theorem of Thomason \cite{Thomason} provides a link between their answer and the topological {\bf Borel} equivariant $K$-theory. The latter is the completion of the topological Atiyah-Segal equivariant $K$-theory; however as there is no known method to reverse the process of completion, knowledge of the topological Borel equivariant $K$-theory does not guarantee results about topological Atiyah-Segal equivariant $K$-theory. Thus it is important to keep in mind that the present work focuses on {\bf topological, Atiyah-Segal} equivariant $K$-theory, and that we consider a {\bf topological} invariant of varieties arising in algebraic {geometry} (endowed with the classical topology). Further details regarding the relationship between topological Borel equivariant $K$-theory and topological Atiyah-Segal equivariant $K$-theory, in the context of toric varieties, may be found in \cite[\S6]{HHRW}.

The correspondence between fan and toric variety is an elegant interplay which is crucial to our work. Fans are constructed from \ouremph{cones} in a highly controlled manner. We assume that $\Sigma$ has finitely many cones, each of which is strongly convex and rational. Then we have affine pieces $U_{\sigma}$ for each cone $\sigma\in\Sigma$, and
\begin{equation}\label{equation:full-cover}
X_\Sigma=\bigcup\limits_{\sigma\in\Sigma} U_\sigma.
\end{equation}
We note that a single cone $\sigma$ may be viewed as a fan itself. As a fan, its cones are $\sigma$ and all the faces of $\sigma$. We will henceforth abuse notation and write $\sigma$ for both the cone and the fan. Occasionaly it will be convenient to consider a fan as a subspace of Euclidean space rather than as a collection of cones. For definitions and details relating to cones, fans and toric varieties, the reader is recommended to consult one or more of \cite{ful:itv}, \cite{colisc:tv} and \cite{oda:cba}.

Before proceeding further, it is useful to fix notation and conventions; our aim is to be as consistent as possible with \cite{HHRW}. In the case of a single (possibly singular) cone $\sigma$, we have an equivariant homotopy equivalence
\[
U_\sigma\simeq_{T^n}T^n/T_\sigma
\]
by \cite[Proposition 12.1.9, Lemma 3.2.5]{colisc:tv}, where $T_\sigma$ is the \ouremph{isotropy torus} of $U_\sigma$.  (Note that in \cite{colisc:tv}, the authors
work with algebraic tori $(\C^*)^n$; as we are only concerned with homotopy equivalence, we have
abusively used the same notation for the compact tori.) 
As is explained further in \cite[\S4]{HHRW}, we may write 
\begin{equation}\label{equation:one-cone}
K^*_{T^n}(U_\sigma)\cong K^*_{T^n}(T^n/T_\sigma)\cong P_K(\sigma),
\end{equation}
where (as discussed in \cite[Example~4.12]{HHRW}), $P_K(\sigma)$ is a graded ring which additively includes a copy of $\Z[{\alpha}_1^{\pm1},\dots,{\alpha}_n^{\pm1}]/J_\sigma$ in each even degree, and  zero in each odd degree. Here the classes $\alpha_i^{\pm1}$ are in cohomological degree zero and $J_\sigma$  denotes the ideal generated by certain Euler classes, which we describe below.

For each cone $\sigma$, we may define a subspace $\sigma^\perp$
of the dual space $M_{\R} = M\otimes_{\Z}\R$ by
\[
\sigma^\perp=\left\{ m\in M_{\R} \ \Big|\  \langle m,u\rangle= 0\ \forall u\in\sigma\right\}.
\]
When $\sigma$ is $d$-dimensional, $\sigma^\perp$ is $(n-d)$-dimensional. Because $\sigma$ is rational, $\sigma^\perp\cap M$ is a rank $(n-d)$ sublattice in the dual lattice $M$.  We may choose a $\Z$-basis of this sublattice, $\nu_1,\dots,\nu_{n-d}$, and for each $\nu_j$, there is a $K$-theoretic equivariant Euler class  $e(\nu_j) = (1-{\boldsymbol\alpha}^{\nu_j})$. We 
define 
\[
J_\sigma := \bigg\langle \left(1-{\boldsymbol\alpha}^{\nu_1}\right)\ ,\ \dots\ ,\ 
\left(1-{\boldsymbol\alpha}^{\nu_{n-d}}\right)\bigg\rangle 
< \Z[{\alpha}_1^{\pm 1},\dots,{\alpha}_n^{\pm 1}].
\]

\begin{remark}\label{remark:equivalence reps}
For an equivalence class $[f_\sigma]\in\Z[{\alpha}_1^{\pm 1},\dots,{\alpha}_n^{\pm 1}]/J_\sigma$, we shall usually work with a choice of representative $f_\sigma\in \Z[{\alpha}_1^{\pm 1},\dots,{\alpha}_n^{\pm 1}]$. One consequence of this is that when $\sigma$ has dimension $d$ and is considered in an ambient space of dimension $n>d$, we may make a choice of representative $f_\sigma$ for any $[f_\sigma]\in\Z[{\alpha}_1^{\pm 1},\dots,{\alpha}_n^{\pm 1}]/J_\sigma$ involving only those $\alpha_i^{\pm1}$ which do not arise in the definition of $J_\sigma$. In other words, considering $\sigma$ in a larger ambient space has the effect of introducing more variables in $\Z[\alpha_1^{\pm1},\ldots,\alpha_n^{\pm1}]$ but also more relations in $J_\sigma$, and hence no practical effect overall.
\end{remark}

More generally, a definition of $P_E(\Sigma)$ for any complex-oriented equivariant cohomology theory $E$ is given in \cite[Definition~4.6]{HHRW} as the limit of a diagram in an appropriate category. In this paper, we work solely in the case when $E$ is equivariant $K$-theory, and we interpret $P_K(\Sigma)$ in the same way as in \cite[Example~4.12]{HHRW}: each element of $P_K(\Sigma)$ may be interpreted as the equivalence class of an integral \ouremph{piecewise Laurent polynomial} on the fan $\Sigma$. A piecewise Laurent polynomial on $\Sigma$ is determined by its values on the maximal cones, and the ring of piecewise Laurent polynomials on $\Sigma$ is denoted $PLP(\Sigma)$. In this interpretation, as a graded ring $P_K(\Sigma)$ is zero in odd degrees, $PLP(\Sigma)$ in even degrees, and addition/multiplication of classes corresponds to cone-wise addition/multiplication of Laurent polynomials.

Recall from \eqref{equation:one-cone} that, in the case of a single cone $\sigma$, $K^*_T(X_\sigma)\cong P_K(\sigma)$. A natural question to ask is:
\begin{question}\label{question}
For which fans $\Sigma$ is $K_{T^n}^*(X_\Sigma)\cong P_K(\Sigma)$?
\end{question}
\noindent In Section \ref{section:mv}, we set up our main tool to analyse Question \ref{question}. In Sections \ref{section:R^2} and \ref{section:R^2-complete} we give a satisfying answer (Theorem \ref{theorem:R^2}) to Question \ref{question} for fans in $\R^2$. In particular, we show that fans $\Sigma$ in $\R^2$ corresponding to Hirzebruch surfaces or weighted projective spaces do satisfy $K_{T^n}^*(X_\Sigma)\cong P_K(\Sigma)$, but fans in $\R^2$ corresponding to those fake weighted projective spaces which are not weighted projective spaces, do not (Examples \ref{example:fwps} and \ref{example:Hirzebruch}). In Sections \ref{section:single-cone} and \ref{section:smooth} we develop theory for smooth fans. Smooth {\bf polytopal} fans were considered in \cite{HHRW} but the present treatment does not require polytopal, and culminates in the expected answer (Theorem \ref{theorem:smooth fans}) to Question \ref{question} for smooth fans. Finally, in Section \ref{section:distant} we introduce the notion of fans with \ouremph{distant singular cones} and analyse Question \ref{question} in this context, providing several examples in $\R^3$. We emphasize that our computations involve solely elementary techniques and do not rely upon any of the sophisticated machinery which is prominent in much of the literature.\\

\noindent {\bf Acknowledgements.} We are especially grateful to Nige Ray for many hours of discussion on the topology of toric varieties and the combinatorics of fans; and to Mike Stillman and Farbod Shokrieh for kindly discussing and pointing out references for the 
algebraic results described in Appendix~\ref{section:lil}. We are also grateful to the referee for constructive comments which have improved our work.

\section{Mayer-Vietoris}\label{section:mv}

We aim to use a Mayer-Vietoris argument to compute the Atiyah-Segal \cite{Seg68} equivariant $K$-theory ring $K^*_T(X_\Sigma)$ of a toric variety $X_\Sigma$. We set up the Mayer-Vietoris sequence as follows. Suppose $\Delta'$ and $\Delta''$ are both sub-fans of $\Sigma$ such that
$\Sigma=\Delta'\cup\Delta''$. We call such a union a \ouremph{splitting of $\Sigma$}.
For every splitting, there is a Mayer-Vietoris long exact sequence of $K^*_T$-algebras:

\begin{equation}\label{equation:mv}
\begin{array}{c}
\begin{tikzpicture}[descr/.style={fill=white,inner sep=1.5pt}]
        \matrix (m) [
            matrix of math nodes,
            row sep=1em,
            column sep=2.5em,
            text height=1.5ex, text depth=0.25ex
        ]
        { \phantom{\cdots} & & \cdots & K^{2i-1}_T(X_{\Delta'\cap\Delta''}) \\
            & K^{2i}_T(X_\Sigma)& K^{2i}_T(X_{\Delta'})\oplus K^{2i}_T(X_{\Delta''}) & K^{2i}_T(X_{\Delta'\cap\Delta''})  \\
            &  K^{2i+1}_T(X_\Sigma) & \cdots \ .& \\
        };
        \path[overlay,->,>=to,anchor=south,font=\scriptsize]
       (m-1-3) edge (m-1-4)
        (m-1-4) edge[out=355,in=175,black] (m-2-2) 
        (m-2-2) edge (m-2-3)
        (m-2-3) edge node {\#} (m-2-4)
        (m-2-4) edge[out=355,in=175,black] (m-3-2)
        (m-3-2) edge (m-3-3);
\end{tikzpicture}
\end{array}
\end{equation}
\noindent We say a splitting $\Sigma=\Delta'\cup\Delta''$ is \ouremph{proper}  if 
$\Sigma \neq \Delta'$ and $\Sigma\neq\Delta''$. The only fans which do not admit proper splittings are those fans which consist of a single cone (and all its faces). Since every fan contains the zero cone, $\Delta'\cap\Delta''$ is never empty. Whilst the union or intersection of two fans in general need not be a fan, $\Delta'\cup\Delta''$ and $\Delta'\cap\Delta''$ {\em are} fans here because $\Delta'$ and $\Delta''$ are both sub-fans of the {\em same} fan.

Typically we will work with splittings $\Sigma=\Delta'\cup\Delta''$
where we have control over $\Delta'$, $\Delta''$ and $\Delta'\cap\Delta''$.  In particular, 
when $K^{2i+1}_T(X_\Gamma)=0$ for $\Gamma=\Delta',\Delta'',\Delta'\cap\Delta''$ (all $i\in\Z$), the long exact sequence 
\eqref{equation:mv} becomes the $4$-term exact sequence in the top row of the diagram below.

\begin{xy}
\xymatrix{
0\ar[r]& K^{2i}_T(X_{\Sigma})\ar[r]&K^{2i}_T(X_{\Delta'})\oplus K^{2i}_T(X_{\Delta''})\ar[r]^-{\#}\ar[d]^\cong& K^{2i}_T(X_{\Delta'\cap\Delta''})\ar[r]\ar[d]^\cong& K^{2i+1}_T(X_{\Sigma})\ar[r]& 0\\
&&PLP(\Delta')\oplus PLP(\Delta'')\ar[r]^-{\#}&PLP(\Delta'\cap\Delta'')
}
\end{xy}

\noindent We shall work in situations where the vertical maps are isomorphisms of $K^*_T$-algebras and thus we treat the four terms exact sequence as

\begin{equation}\label{equation:4-term}
0\to K^{2i}_T(X_{\Sigma})\longrightarrow PLP(\Delta')\oplus PLP(\Delta'')\stackrel{\#}
{\longrightarrow} PLP(\Delta'\cap\Delta'')\longrightarrow K^{2i+1}_T(X_{\Sigma})\to 0,
\end{equation}

\noindent and deduce that we have $K^{2i}_T(X_\Sigma)\cong PLP(\Sigma)$ as $K^*_T$-algebras.

The exact sequence \eqref{equation:4-term} makes plain the central role played by the map 
\[
PLP(\Delta')\oplus PLP(\Delta'')\stackrel{\#}{\to}PLP(\Delta'\cap\Delta''),
\] 
a map which may be made very explicit. For $(F,G)\in PLP(\Delta')\oplus PLP(\Delta'')$,
\begin{equation}\label{equation:sharp}
\#\big( (F,G)\big) = F|_{\Delta'\cap \Delta''}-G|_{\Delta'\cap \Delta''}\in PLP(\Delta'\cap\Delta'').
\end{equation}
It is immediate from \eqref{equation:4-term} that we have $K^0_T(X_{\Sigma}) = \ker(\#) = PLP(\Sigma)$, and $K^1_T(X_{\Sigma}) = \mathrm{coker}(\#)$.

\section{General results for fans in $\R^2$}\label{section:R^2}

Fans in $\R^2$ are amenable to study because of the limited ways in which their cones may interact with each other. If $\Sigma$ is a fan in $\R^2$ with cones $\sigma$ and $\tau$, then $\sigma\cap\tau$ must be either $\{0\}$, a ray, or the union of two rays. Only the last case requires serious consideration, and by careful construction of the Mayer-Vietoris argument we may limit the frequency of this occurrence to just once when the fan is complete, and not at all when the fan is incomplete.

When the fan is a single cone $\sigma$, then \eqref{equation:one-cone} guarantees that $K^*_T(X_\sigma)\cong P_K(\sigma)$. We must now consider the possibility of more than one cone. We say that a non-trivial incomplete fan $\Sigma$ in $\R^2$ is a \ouremph{clump} if $\Sigma\setminus\{0\}$ is connected (as a subspace of $\R^2$). We begin our analysis by proving that clumps always satisfy $K^*_T(X_\Sigma)\cong P_K(\Sigma)$.

\begin{lemma}\label{lemma:consecutive-2-cones}
If $\Sigma$ is a clump, then $K^*_T(X_\Sigma)\cong P_K(\Sigma)$.
\end{lemma}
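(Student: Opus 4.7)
The plan is to proceed by induction on the number of maximal cones of $\Sigma$. The base case is when $\Sigma$ consists of a single cone (a ray or a $2$-dimensional cone) together with its faces; then \eqref{equation:one-cone} directly gives $K^*_T(X_\Sigma)\cong P_K(\Sigma)$.

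For the inductive step, suppose $\Sigma$ has $k\geq 2$ maximal cones. The connectivity of $\Sigma\setminus\{0\}$ in $\R^2$ forces these maximal cones to be $2$-dimensional and to be arranged in a linear chain, so I can order them $\sigma_1,\ldots,\sigma_k$ with $\sigma_i\cap\sigma_{i+1}$ a common ray for $1\leq i\leq k-1$. I set $\Delta'$ to be the sub-fan generated by $\sigma_1,\ldots,\sigma_{k-1}$, which is itself a clump with $k-1$ maximal cones, and $\Delta''$ to be $\sigma_k$, viewed as a fan. Then $\Delta'\cap\Delta''$ is the single ray $\sigma_{k-1}\cap\sigma_k$. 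By the inductive hypothesis applied to $\Delta'$ and \eqref{equation:one-cone} applied to $\Delta''$ and to $\Delta'\cap\Delta''$, all three of these fans satisfy $K^*_T(X_\Gamma)\cong P_K(\Gamma)$; in particular their odd equivariant $K$-theory vanishes, so the Mayer-Vietoris sequence \eqref{equation:mv} for $\Sigma=\Delta'\cup\Delta''$ collapses to the four-term exact sequence \eqref{equation:4-term}.

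The key computation is to show that the map $\#$ is surjective. Because $\Delta''$ is a $2$-dimensional cone, $J_{\Delta''}=0$, so $PLP(\Delta'')\cong \Z[\alpha_1^{\pm 1},\alpha_2^{\pm 1}]$; while $PLP(\Delta'\cap\Delta'')$ is the quotient of the same ring by a single Euler class $1-{\boldsymbol\alpha}^\nu$ coming from the perpendicular direction to the ray $\Delta'\cap\Delta''$. The restriction $PLP(\Delta'')\to PLP(\Delta'\cap\Delta'')$ is therefore the natural surjection, which means that $\#$ itself is surjective. From \eqref{equation:4-term} I then conclude $K^{2i+1}_T(X_\Sigma)=0$ and $K^{2i}_T(X_\Sigma)\cong \ker(\#)\cong PLP(\Sigma)$, as required.

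The main obstacle is really the combinatorial setup rather than the homological computation: one must verify that a clump always admits a splitting into a smaller clump and a single endpoint $2$-cone meeting in a single ray. Once this linear structure of clumps in $\R^2$ is in hand, the surjectivity of restriction from a $2$-dimensional cone onto its bounding ray is immediate (there are simply no relations on $\Delta''$ to obstruct it), and the Mayer-Vietoris machinery delivers the isomorphism without further effort.
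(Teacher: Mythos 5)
Your proof is correct and follows essentially the same route as the paper: induction on the number of maximal cones, splitting off the last $2$-cone $\sigma_k$ so that $\Delta'\cap\Delta''$ is a single ray, and observing that $\#$ is already surjective from the $PLP(\Delta'')$ summand since $PLP(\sigma_k)\cong\Z[\alpha_1^{\pm1},\alpha_2^{\pm1}]$ maps onto the quotient $PLP(\rho_k)$. The only (harmless) difference is that the paper's base case also explicitly covers a clump with no $2$-dimensional cones, i.e.\ a single zero- or one-dimensional cone, which is likewise immediate from \eqref{equation:one-cone}.
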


\begin{proof}If $\Sigma$ has no two-dimensional cones, then it must consist of a single zero- or one-dimensional cone and the lemma is immediate from \eqref{equation:one-cone}. We now consider the case that $\Sigma$ has $k>0$ two-dimensional cones, and proceed by induction on $k$. If $k=1$, the lemma is immediate from \eqref{equation:one-cone}, which concludes the base case. Now suppose that $\Sigma$ has two-dimensional cones $\sigma_1,\ldots,\sigma_k$ and rays $\rho_1,\ldots,\rho_{k+1}$, indexed so that $\sigma_i\cap\sigma_{i+1}=\rho_{i+1}$ for $1\leq i\leq k-1$. We assume inductively that $K^*_T(X_{\sigma_1\cup\cdots\cup\sigma_{k-1}})\cong P_K(\sigma_1\cup\cdots\cup\sigma_{k-1})$ as a $K^*_T$-algebra.

In the Mayer-Vietoris sequence (\ref{equation:mv}) take $\Delta'=\sigma_1\cup\cdots\cup\sigma_{k-1}$ and $\Delta''=\sigma_k$. Then $\Delta'\cap\Delta''=\rho_k$ and we have $K^*_T(X_{\Delta'\cap\Delta''})\cong P_K(\rho_k)$ as a $K^*_T$-algebra. Then the Mayer-Vietoris sequence splits
into a $4$-term sequence as in \eqref{equation:4-term},
\[
0\to K^0_T(X_{\Sigma})\longrightarrow PLP(\Delta')\oplus PLP(\Delta'')\stackrel{\#}
{\longrightarrow} PLP(\Delta'\cap\Delta'')\longrightarrow K^1_T(X_{\Sigma})\to 0.
\]
Noting that $\#$ is surjective from the second summand, we have $K^0_T(X_{\Sigma}) = \ker(\#) = PLP(\Sigma)$ and $K^1_T(X_{\Sigma}) = \mathrm{coker}(\#) = 0$. Since all the identifications made in the inductive step were as $K^*_T$-algebras, we may assemble the $4$-term sequences to achieve an algebra isomorphism $K^*_T(X_\Sigma) \cong P_K(\Sigma)$, as desired. This completes the inductive step, and the lemma follows.
\end{proof}

We are now able to analyze incomplete fans in $\R^2$.

\begin{lemma}\label{lemma:incomplete}
If $\Sigma$ is an incomplete fan in $\R^2$, then $K^*_T(X_\Sigma)\cong P_K(\Sigma)$.
\end{lemma}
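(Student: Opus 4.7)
The plan is to reduce an arbitrary incomplete fan in $\R^2$ to the clump case handled by Lemma \ref{lemma:consecutive-2-cones}. Any fan $\Sigma$ in $\R^2$ may be decomposed as a finite union of clumps $C_1,\ldots,C_m$, obtained by taking the closures in $\Sigma$ of the connected components of $\Sigma\setminus\{0\}$ (each clump is automatically a sub-fan of $\Sigma$ since in $\R^2$ two cones meeting in a ray lie in the same component). Distinct clumps intersect only in the zero cone, so $C_i\cap C_j = \{0\}$ for $i\ne j$. The proof will be an induction on the number $m$ of clumps.

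For the base case $m\leq 1$, either $\Sigma=\{0\}$ and the result follows from \eqref{equation:one-cone}, or $\Sigma$ is a single clump and Lemma \ref{lemma:consecutive-2-cones} applies. For the inductive step, I would split $\Sigma=\Delta'\cup\Delta''$ with $\Delta'=C_1$ and $\Delta''=C_2\cup\cdots\cup C_m$; then $\Delta'\cap\Delta''=\{0\}$. By the inductive hypothesis (and Lemma \ref{lemma:consecutive-2-cones} for $\Delta'$), both pieces satisfy the desired identification as $K^*_T$-algebras, and so does their intersection by \eqref{equation:one-cone}. In particular all three terms vanish in odd degrees, so the Mayer-Vietoris sequence \eqref{equation:mv} collapses to the four-term sequence \eqref{equation:4-term}:
\[
0\to K^{2i}_T(X_\Sigma)\longrightarrow PLP(\Delta')\oplus PLP(\Delta'')\stackrel{\#}{\longrightarrow} PLP(\{0\})\longrightarrow K^{2i+1}_T(X_\Sigma)\to 0.
\]

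To finish, I would show that $\#$ is surjective. Since $\{0\}$ is a face of every cone of $\Delta'$, any Laurent polynomial $h\in PLP(\{0\})=\Z[\alpha_1^{\pm 1},\alpha_2^{\pm 1}]$ is realized as the restriction of the constant piecewise Laurent polynomial on $\Delta'$ with value $h$ on every maximal cone; taking $(F,G)=(h,0)$ gives a preimage. Hence $K^{2i+1}_T(X_\Sigma)=0$ and $K^{2i}_T(X_\Sigma)=\ker(\#)=PLP(\Sigma)$, so $K^*_T(X_\Sigma)\cong P_K(\Sigma)$ as $K^*_T$-algebras, completing the induction.

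The main thing to watch is the step where clumps are glued together: one must check that the decomposition of $\Sigma$ into clumps is genuinely a decomposition of \emph{sub-fans} (so that \eqref{equation:mv} applies), and that the identifications provided by the inductive hypothesis are compatible as $K^*_T$-algebras when assembled. Both are straightforward in $\R^2$ because distinct clumps cannot share a ray, and the constant-extension argument for surjectivity of $\#$ is the reason incompleteness is essential — the analogous statement fails once the clumps fit together into a complete fan, as will be addressed in Section \ref{section:R^2-complete}.
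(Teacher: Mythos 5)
Your proposal is correct and follows essentially the same route as the paper: decompose the incomplete fan into clumps meeting pairwise in $\{0\}$, induct on the number of clumps using Lemma \ref{lemma:consecutive-2-cones} as the base case, and collapse the Mayer--Vietoris sequence \eqref{equation:mv} to the four-term sequence \eqref{equation:4-term} with $\Delta'\cap\Delta''=\{0\}$. The only cosmetic differences are which summand you peel off and that you justify surjectivity of $\#$ onto $PLP(\{0\})$ by the explicit constant extension (noting that $PLP(\{0\})$ is really the quotient $\Z[\alpha^{\pm1},\beta^{\pm1}]/J_{\{0\}}$, so one extends a chosen representative), which is exactly what underlies the paper's citation of Lemma \ref{lemma:consecutive-2-cones} at that step.
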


\begin{proof}
Since every incomplete fan in $\R^2$ may be decomposed as a union of clumps with pairwise 
intersections equal to $\{0\}$, it suffices to prove the lemma for such unions of clumps. We proceed by induction on the number of clumps. 
Lemma~\ref{lemma:consecutive-2-cones} establishes the base case.

Suppose inductively that the lemma holds for such unions of fewer than $k$ clumps, and consider $\Sigma=\Sigma_1\cup\cdots\cup\Sigma_k$, a union of $k$ clumps satisfying $\Sigma_i\cap\Sigma_j = \{0\}$ for $i\neq j$. In the Mayer-Vietoris sequence (\ref{equation:mv}), take $\Delta'=\Sigma_1\cup\cdots\cup\Sigma_{k-1}$ and $\Delta''=\Sigma_k$. Since the pairwise intersections $\Sigma_i\cap\Sigma_j$ are $\{0\}$, we have $\Delta'\cap\Delta''=\{0\}$ and $K^*_T(X_{\Delta'\cap\Delta''})\cong P_K(\{0\})$ as a $K^*_T$-algebra. The Mayer-Vietoris sequence becomes a $4$-term sequence as in \eqref{equation:4-term},
\[
0\to K^0_T(X_{\Sigma})\longrightarrow PLP(\Delta')\oplus PLP(\Delta'')\stackrel{\#}
{\longrightarrow} PLP(\Delta'\cap\Delta'')\longrightarrow K^1_T(X_{\Sigma})\to 0.
\]
Noting that $\#$ is surjective from the second summand by Lemma~\ref{lemma:consecutive-2-cones}, we have
$K^0_T(X_{\Sigma}) = \ker(\#) = PLP(\Sigma)$ and 
$K^1_T(X_{\Sigma}) = \mathrm{coker}(\#) = 0$. 

Since all the identifications made in the inductive step were as $K^*_T$-algebras, we may assemble the $4$-term sequences to achieve an algebra isomorphism $K^*_T(X_\Sigma) \cong P_K(\Sigma)$, as desired. This completes the inductive step, and the lemma follows.
\end{proof}

\noindent Lemma~\ref{lemma:incomplete} gives a description of the equivariant $K$-theory ring of a toric
variety whose fan in $\R^2$ is incomplete. To continue our study, we analyze the map $\#$ in more detail in the context of a clump.

Let $\Sigma$ be a clump. Denote the maximal cones of $\Sigma$ by $\sigma_1,\ldots,\sigma_k$ and the rays by $\rho_1,\ldots,\rho_{k+1}$, numbered so that 
$\sigma_i\cap\sigma_{i+1}=\rho_{i+1}$ for $1\leq i\leq k-1$ (as in Figure \ref{figure:clump}). We consider surjectivity of the map
\[
\#\colon PLP(\Sigma)\to PLP(\rho_1\cup\rho_{k+1}),
\]
interpreting $\#$ as the restriction of a piecewise Laurent polynomial on $\Sigma$, to a piecewise Laurent polynomial on the subfan $\rho_1\cup\rho_{k+1}\subset \Sigma$. Our use of the symbol $\#$ here is a small abuse of notation, which we justify upon anticipation of the application!

\begin{figure}[h]
\includegraphics[height=2in]{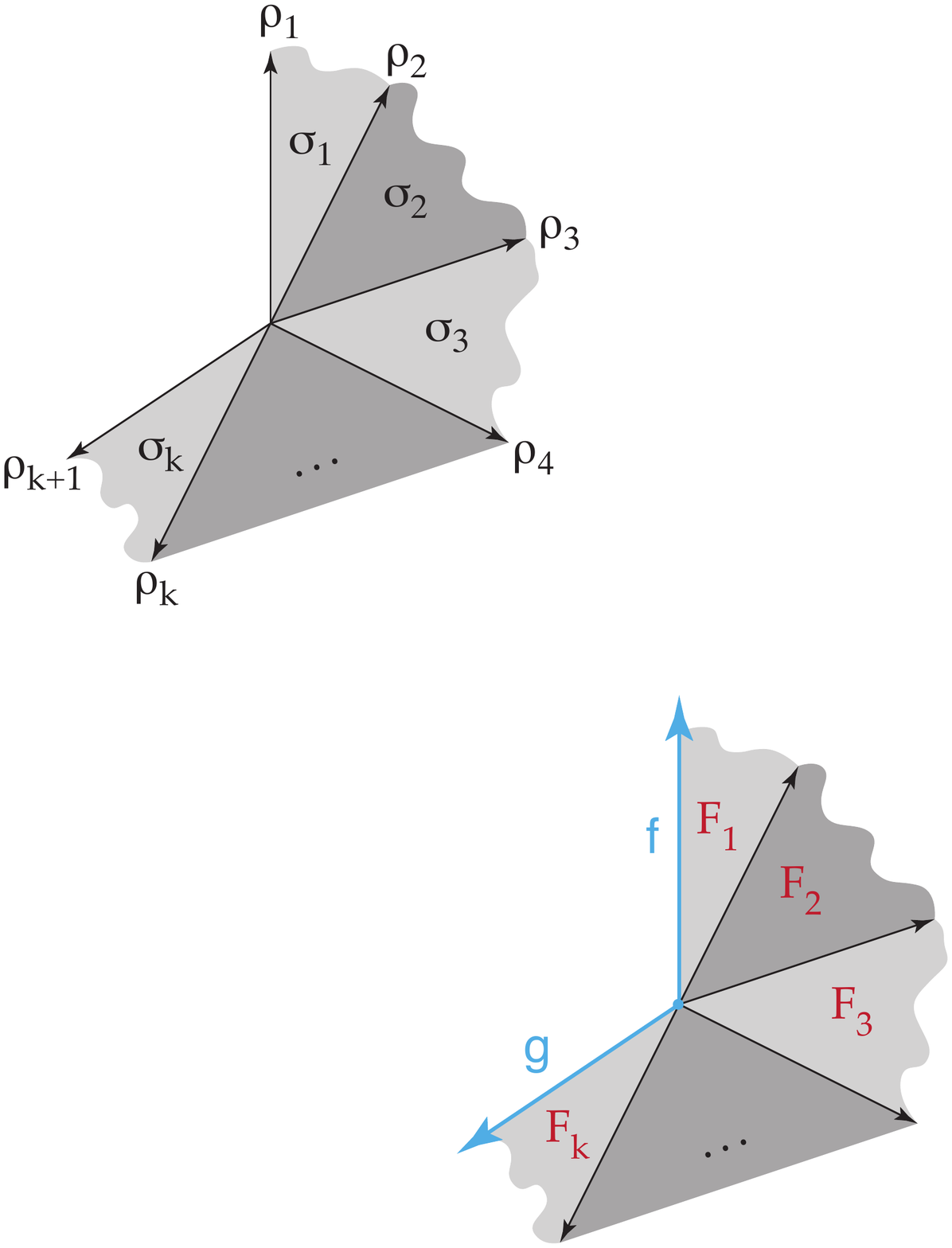}\hspace*{1em}\includegraphics[height=2in]{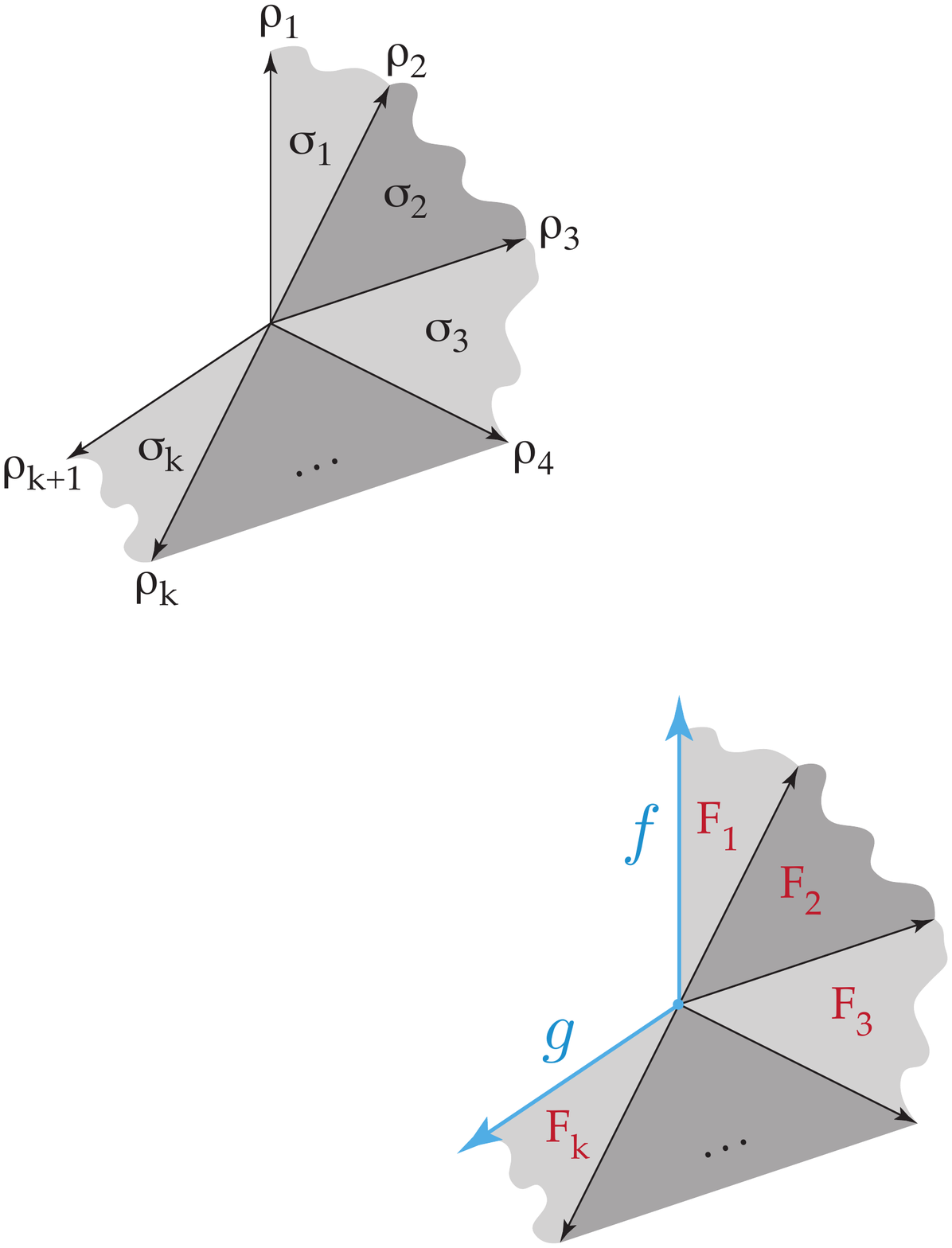}
\caption{An incomplete fan in $\R^2$ which is a clump (left) and related piecewise Laurent polynomials (right).\label{figure:clump}}
\end{figure}

The question of whether $\#\colon PLP(\Sigma)\to PLP(\rho_1\cup\rho_{k+1})$ is surjective becomes: given $(f,g)$ piecewise on $\rho_1\cup\rho_{k+1}$, does there exist $(F_1,\ldots,F_k)\in PLP(\Sigma)$ whose image under $\#$ is $(f,g)$? It is convenient to phrase the piecewise conditions in terms of ideal membership. Observe that $(f,g)$ being piecewise on $\rho_1\cup\rho_{k+1}$ is equivalent to $f-g\in J_{\{0\}}$. The tuple $(F_1,\ldots,F_k)$ being piecewise on $\Sigma$ with image under $\#$ equal to $(f,g)$ is equivalent to all of the ideal membership requirements
\begin{eqnarray}
\nonumber F_1-f&\in&J_{\rho_1}\\
\nonumber F_2-F_1&\in&J_{\rho_2}\\
\label{equation:piecewise ideal} &\vdots&\\
\nonumber F_k-F_{k-1}&\in&J_{\rho_k}\\
\nonumber g-F_k&\in&J_{\rho_{k+1}}.
\end{eqnarray}

\begin{lemma}\label{lemma:getting-there}
We have
\begin{equation}\label{equation:image sharp}
\mathrm{Im}\bigg( \#:PLP(\sigma_1\cup\cdots\cup\sigma_k)\to 
PLP(\rho_1\cup\rho_{k+1})\bigg)
 = \bigg\{ (f,g)\ \bigg| \ f-g\in J_{\rho_1}+\cdots+J_{\rho_{k+1}}\bigg\}.
\end{equation}
\end{lemma}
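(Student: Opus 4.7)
The proof will proceed by two inclusions, both of which are essentially bookkeeping around the telescoping pattern built into the piecewise conditions \eqref{equation:piecewise ideal}.

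For the inclusion $\mathrm{Im}(\#) \subseteq \{(f,g) : f-g \in J_{\rho_1}+\cdots+J_{\rho_{k+1}}\}$, suppose $(f,g)=\#(F_1,\ldots,F_k)$ for some $(F_1,\ldots,F_k)\in PLP(\Sigma)$. Then the sequence of ideal memberships in \eqref{equation:piecewise ideal} holds. Summing the left-hand sides of those memberships gives
\[
(F_1 - f) + (F_2 - F_1) + \cdots + (F_k - F_{k-1}) + (g - F_k) = g - f,
\]
which is a telescoping identity. Since each summand lies in the corresponding $J_{\rho_i}$, and each $J_{\rho_i}$ sits inside the sum $J_{\rho_1}+\cdots+J_{\rho_{k+1}}$, we conclude $g-f$ (and hence $f-g$) lies in this ideal sum.

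For the reverse inclusion, suppose $f-g \in J_{\rho_1}+\cdots+J_{\rho_{k+1}}$. Then we may write
\[
f - g = j_1 + j_2 + \cdots + j_{k+1}, \qquad j_i \in J_{\rho_i}.
\]
The idea is to use this decomposition to construct the $F_i$ so that each step picks up exactly one $j_i$. Explicitly, set
\[
F_i \ :=\ f - j_1 - j_2 - \cdots - j_i \qquad (1 \le i \le k).
\]
Then $F_1 - f = -j_1 \in J_{\rho_1}$, and for $2 \le i \le k$ we have $F_i - F_{i-1} = -j_i \in J_{\rho_i}$. Finally,
\[
g - F_k \ =\ g - f + j_1 + \cdots + j_k \ =\ -(j_1+\cdots+j_{k+1}) + (j_1+\cdots+j_k) \ =\ -j_{k+1} \in J_{\rho_{k+1}}.
\]
Thus all the conditions \eqref{equation:piecewise ideal} are satisfied, so $(F_1,\ldots,F_k)\in PLP(\Sigma)$ and $\#(F_1,\ldots,F_k)=(f,g)$.

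I do not anticipate any genuine obstacle: both directions reduce to the observation that the piecewise compatibility conditions form a telescoping chain whose total increment is $g-f$. The only point requiring mild care is the choice of Laurent polynomial representatives of classes in the various quotients $\Z[\alpha_1^{\pm 1},\ldots,\alpha_n^{\pm 1}]/J_{\rho_i}$, but Remark~\ref{remark:equivalence reps} lets us work directly with representatives in $\Z[\alpha_1^{\pm 1},\ldots,\alpha_n^{\pm 1}]$ throughout, so the construction of the $F_i$ above makes sense on the nose.
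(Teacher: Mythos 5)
Your proposal is correct and follows essentially the same route as the paper: the forward inclusion by telescoping the chain of ideal memberships \eqref{equation:piecewise ideal}, and the reverse inclusion by decomposing $f-g$ into summands $j_i\in J_{\rho_i}$ and peeling off one summand at each step to define $F_i$ (the paper writes $j_i=a_ie(\rho_i)$ using the single Euler-class generator of each $J_{\rho_i}$, but the construction is identical). The paper likewise notes, as you do, that the choice of representatives is immaterial (its Remark~\ref{remark: representatives}).
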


\begin{proof}
Adding all of the equations in the list \eqref{equation:piecewise ideal} gives 
$g-f\in J_{\rho_1}+\cdots+J_{\rho_{k+1}}$, and this establishes 
that the left-hand side of \eqref{equation:image sharp} is contained in the right-hand
side. To show the opposite inclusion, suppose $(f,g)$ satisfies
$f-g\in J_{\rho_1}+\cdots+J_{\rho_{k+1}}$
and write
\begin{equation}\label{eq:f-g}
f-g=a_1e(\rho_1)+\cdots+a_{k+1}e(\rho_{k+1})
\end{equation}
for some Laurent polynomials $a_1,\ldots,a_{k+1}\in\Z[\alpha^{\pm1},\beta^{\pm1}]$. Then set
\begin{eqnarray}
\nonumber F_1&=&f-a_1e(\rho_1)\\
\nonumber F_2&=&F_1-a_2e(\rho_2)\\
\label{eq:string} &\vdots&\\
\nonumber F_{k-1}&=&F_{k-2}-a_{k-1}e(\rho_{k-1})\\
\nonumber F_k&=&F_{k-1}-a_ke(\rho_k)=f-(f-g-a_{k+1}e(\rho_{k+1}))=g+a_{k+1}e(\rho_{k+1}).
\end{eqnarray}
By construction of the $F_j$, it follows that $f$, $g$ and the $F_j$ satisfy 
\eqref{equation:piecewise ideal},
and so 
\[
\#\Big( (F_1,\cdots,F_k)\Big) = (f,g),
\]
as desired.
\end{proof}

\begin{remark}\label{remark: representatives}
In the proof of Lemma~\ref{lemma:getting-there}, we could have chosen different representatives
in place of $f$ and $g$, giving rise to a different expression in \eqref{eq:f-g} and so in turn to different Laurent polynomials in \eqref{eq:string}. Nevertheless, these would still satisfy 
\eqref{equation:piecewise ideal} and so the choice of representative does not matter.
\end{remark}

We end this section with a result which is valid for both complete and incomplete fans in $\R^2$. This provides a tool for our analysis of complete fans in $\R^2$ in the next section.

\begin{theorem}\label{thm:2d}
Let $\Sigma$ be a fan in $\R^2$ that admits a proper splitting. Then $K^{2i}_T(X_\Sigma)=PLP(\Sigma)$ for each $i\in \Z$, and the following statements are equivalent.
\begin{enumerate}
\item For each proper splitting, the map $\#$ in \eqref{equation:mv}
is surjective.
\item For some proper splitting, the map $\#$ in \eqref{equation:mv}
is surjective.
\item $K^{2i+1}_T(X_\Sigma) = 0$ for each $i\in \Z$.
\item  $K^*_T(X_\Sigma)\cong P_K(\Sigma)$ as $K^*_T$-algebras.
\end{enumerate}
\end{theorem}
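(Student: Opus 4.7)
The plan is to apply the Mayer-Vietoris sequence \eqref{equation:mv} to an arbitrary proper splitting $\Sigma = \Delta' \cup \Delta''$, reducing everything to an analysis of the map $\#$. The key observation that unlocks the $\R^2$ case is that every proper subfan of $\Sigma$ is incomplete: if $\Sigma$ is itself incomplete, Lemma~\ref{lemma:incomplete} already yields the entire theorem; and if $\Sigma$ is complete, any proper subfan must omit a maximal cone and hence fails to cover $\R^2$. So in both cases each of $\Delta'$, $\Delta''$ and $\Delta' \cap \Delta''$ is an incomplete fan in $\R^2$, and Lemma~\ref{lemma:incomplete} identifies $K^*_T(X_\Gamma)$ with $P_K(\Gamma)$ as a $K^*_T$-algebra for each of these three subfans. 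In particular the odd equivariant $K$-theory of each piece vanishes, so the long exact sequence \eqref{equation:mv} collapses into the four-term sequence \eqref{equation:4-term}.

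From exactness of \eqref{equation:4-term} I would immediately extract $K^{2i}_T(X_\Sigma) \cong \ker(\#)$ and $K^{2i+1}_T(X_\Sigma) \cong \mathrm{coker}(\#)$. A pair $(F,G) \in PLP(\Delta') \oplus PLP(\Delta'')$ lies in $\ker(\#)$ precisely when its two restrictions to $\Delta' \cap \Delta''$ agree, and by the very definition of piecewise Laurent polynomials this equalizer is $PLP(\Sigma)$. This gives the first assertion $K^{2i}_T(X_\Sigma) = PLP(\Sigma)$ for every $i$, valid regardless of which of (1)--(4) might hold.

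For the equivalences, the identification $K^{2i+1}_T(X_\Sigma) = \mathrm{coker}(\#)$ shows that surjectivity of $\#$ for the chosen splitting is equivalent to the vanishing of $K^{2i+1}_T(X_\Sigma)$. As the latter is an invariant of $\Sigma$ alone, it does not depend on which proper splitting is used, which simultaneously delivers (1)$\Leftrightarrow$(2)$\Leftrightarrow$(3). For (3)$\Leftrightarrow$(4), the ring $P_K(\Sigma)$ is zero in odd degrees and equals $PLP(\Sigma)$ in even degrees; combined with $K^{2i}_T(X_\Sigma) = PLP(\Sigma)$ already established, the full $K^*_T$-algebra isomorphism $K^*_T(X_\Sigma) \cong P_K(\Sigma)$ is equivalent to the vanishing of the odd part.

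The step I expect to require the most care is verifying that the identification $K^{2i}_T(X_\Sigma) \cong PLP(\Sigma)$ is one of $K^*_T$-\emph{algebras}, not merely of $K^*_T$-modules. This amounts to the standard fact that the inclusion-induced maps $K^*_T(X_\Sigma) \to K^*_T(X_{\Delta'})$ and $K^*_T(X_\Sigma) \to K^*_T(X_{\Delta''})$ are ring homomorphisms, so that the image of $K^{2i}_T(X_\Sigma)$ inside the direct sum is the equalizer (a subring) rather than just an arbitrary submodule; this is precisely what makes the vertical identifications in the diagram preceding \eqref{equation:4-term} algebra isomorphisms, and is used implicitly throughout Section~\ref{section:mv}.
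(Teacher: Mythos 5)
Your proposal is correct and follows essentially the same route as the paper: invoke Lemma~\ref{lemma:incomplete} on the pieces of an arbitrary proper splitting (all of which are incomplete), collapse the Mayer-Vietoris sequence to the four-term sequence \eqref{equation:4-term}, and read off $K^{2i}_T(X_\Sigma)=\ker(\#)=PLP(\Sigma)$ and $K^{2i+1}_T(X_\Sigma)=\mathrm{coker}(\#)$, from which the four equivalences follow exactly as in the paper's argument. Your explicit remarks that $\Delta'\cap\Delta''$ is also incomplete and that the splitting-independence of $K^{\mathrm{odd}}_T(X_\Sigma)$ drives $(1)\Leftrightarrow(2)$ are points the paper leaves implicit, but the substance is identical.
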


\begin{proof}
Let $\Sigma$ be a fan in $\R^2$ that admits a proper splitting, and choose a proper splitting $\Sigma=\Delta'\cup\Delta''$. Then $\Delta'$ and $\Delta''$ are incomplete fans, so Lemma~\ref{lemma:incomplete} guarantees that both $K^*_T(X_{\Delta'})\cong P_K(\Delta')$ and $K^*_T(X_{\Delta''})\cong P_K(\Delta'')$ as $K^*_T$-algebras. Hence the long exact Mayer-Vietoris sequence 
\eqref{equation:mv} for this splitting reduces to a $4$-term exact sequence
\begin{equation}\label{equation:2dsplit}
0\to K^{2i}_T(X_{\Sigma})\longrightarrow PLP(\Delta')\oplus PLP(\Delta'')\stackrel{\#}
{\longrightarrow} PLP(\Delta'\cap\Delta'')\longrightarrow K^{2i+1}_T(X_{\Sigma})\to 0
\end{equation}
for each $i\in \Z$. Thus we may identify $K^{2i}_T(X_{\Sigma})=\ker(\#)=PLP(\Sigma)$, as required.

We next show that the four given statements are equivalent.

\vskip 0.1in
\noindent $\mathbf{\big(\,(1)\Lra (2)\,\big)}$  This is straightforward. \hfill \ding{52}

\vskip 0.1in
\noindent $\mathbf{\big(\,(2)\Lra (3)\,\big)}$ Fix a proper splitting for which $\#$ is surjective. Then it is straightforward from \eqref{equation:2dsplit} that $K^{2i+1}_T(X_{\Sigma})=0$ for each $i\in\Z$. \hfill \ding{52}

\vskip 0.1in
\noindent $\mathbf{\big(\,(3)\Lra (1)\,\big)}$ Let $\Sigma=\Delta'\cup\Delta''$ be any proper splitting. Then we have a $4$-term exact sequence  \eqref{equation:2dsplit}. But we are assuming $K^{2i+1}_T(X_{\Sigma})=0$, so we must have that $\#$ is surjective.  \hfill \ding{52}

\vskip 0.1in
\noindent $\mathbf{\big(\,(3)\Lra (4)\,\big)}$ When $K^{2i+1}_T(X_\Sigma) = 0$, the $4$-term exact sequence \eqref{equation:2dsplit} becomes a $3$-term exact sequence. Using the identification $K^{2i}_T(X_\Sigma)=PLP(\Sigma)$, this is
\begin{equation}\label{equation:2d-no-odd}
0\to PLP(X_{\Sigma})\longrightarrow PLP(\Delta')\oplus PLP(\Delta'')\stackrel{\#}
{\longrightarrow} PLP(\Delta'\cap\Delta'')\to 0.
\end{equation}
The identifications $K_T^{2i}(X_\Gamma)\cong PLP(\Gamma)$, for $\Gamma=\Delta',\Delta''$, and $\Delta'\cap\Delta''$ induce algebra isomorphisms $K_T^{*}(X_\Gamma)\cong P_K(\Gamma)$.  We may thus assemble the sequences \eqref{equation:2d-no-odd} to achieve an algebra isomorphism $K_T^*(X_\Sigma)\cong P_K(\Sigma)$. \hfill \ding{52}

\vskip 0.1in
\noindent $\mathbf{\big(\,(4)\Lra (3)\,\big)}$ This is straightforward.\hfill\ding{52}
\end{proof}

\section{Complete fans in $\R^2$}\label{section:R^2-complete}

We next consider surjectivity of the map $\#$ of \eqref{equation:mv} in the context of complete fans in $\R^2$. 
The ideal $J_{\rho_1}+\cdots+J_{\rho_{k+1}}$ appeared in Lemma \ref{lemma:getting-there}, and our study 
involves further ideals of this form. Thus we make a small algebraic digression to discuss these \ouremph{lattice ideals}.

The theory of lattice ideals in polynomial rings is well-developed; see  \cite[\S7.1]{ms:cca}, for example. We make the natural 
generalisation to Laurent polynomial rings here. Given a lattice $L\leq\Z^s$ for some $s>0$, we write $J_L$ for the 
\ouremph{Laurent polynomial lattice ideal} of $L$,
\[
J_L=\bigg\langle {\boldsymbol\alpha}^u-{\boldsymbol\alpha}^v\ \bigg|\ u-v\in L, u,v\in\Z^s\bigg\rangle 
< \Z[{\alpha}_1^{\pm 1},\dots,{\alpha}_s^{\pm 1}].
\]
If $\mathbb{L}$ is a matrix whose columns $\ell^1,\ldots,\ell^r$ $\Z$-span the lattice $L$, 
we write 
\[
J_\mathbb{L}=
\bigg\langle 1-{\boldsymbol\alpha}^{\ell^1}\ ,\ \dots\ ,\ 1-{\boldsymbol\alpha}^{\ell^r} \bigg\rangle
< \Z[{\alpha}_1^{\pm 1},\dots,{\alpha}_s^{\pm 1}].
\]
The lattice ideal lemma for Laurent polynomial rings (Corollary~\ref{cor:lattice LPL}) guarantees that $J_L = J_{\mathbb{L}}$. Thus, in the terminology of Section \ref{section:background}, for $\mathbb{L} = [ \nu_1 \cdots \nu_{n-d}]$, the ideal $J_\sigma=J_{\mathbb{L}}$ is the lattice ideal for the sublattice $\sigma^\perp\cap M$. Returning to the context of Lemma~\ref{lemma:getting-there}, the lattice ideal lemma for Laurent polynomial rings (Corollary~\ref{cor:lattice LPL}) guarantees that $J_{\rho_1}+\cdots+J_{\rho_{k+1}}=J_L$, where $L$ is the lattice $\Z$-spanned by the normal vectors to the rays $\rho_1,\dots,\rho_{k+1}$. For an incomplete fan $\Sigma\subset\R^2$ which is a clump, with rays $\rho_1,\dots, \rho_{k+1}$, we denote this lattice ideal by $J_\Sigma = J_L =J_{\rho_1}+\cdots+J_{\rho_{k+1}}$.
We provide further discussion of the lattice ideal lemma in Appendix \ref{section:lil}, contenting ourselves here with an immediate corollary of Proposition~\ref{prop:lattice subset}.

\begin{lemma}\label{lemma:lil-for-2d}
In $\Z[\alpha^{\pm1},\beta^{\pm1}]$, we have 
$J_{\rho_1}+\cdots+J_{\rho_{k+1}}=\langle1-\alpha,1-\beta\rangle$ if and only 
if the primitive generators of the rays
$\rho_1,\ldots,\rho_{k+1}$ span (over $\Z$) the lattice $\Z^2$. \qed
\end{lemma}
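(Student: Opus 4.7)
The plan is to reduce the lemma to two separate facts: an algebraic one (equality of lattice ideals reflects equality of lattices, supplied by Proposition~\ref{prop:lattice subset}) and a purely $2$-dimensional geometric one (passage from a primitive ray generator to its primitive normal is a unimodular operation).

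First, I would apply the lattice ideal lemma for Laurent polynomial rings (Corollary~\ref{cor:lattice LPL}) to rewrite the sum of ideals. Each ray $\rho_i \subset \R^2$ is a $1$-dimensional rational cone, so $\rho_i^\perp\cap M$ is a rank-$1$ sublattice of $M\cong \Z^2$; pick a primitive generator $\nu_i\in M$, so that $J_{\rho_i}=\langle 1-\boldsymbol\alpha^{\nu_i}\rangle$. Then Corollary~\ref{cor:lattice LPL} gives
\[
J_{\rho_1}+\cdots+J_{\rho_{k+1}} \;=\; J_L,
\]
where $L\le M$ is the $\Z$-span of $\nu_1,\ldots,\nu_{k+1}$. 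In the same way $\langle 1-\alpha,1-\beta\rangle$ is the lattice ideal $J_{\Z^2}$ of the full lattice $M=\Z^2$, since $(1,0),(0,1)$ is a $\Z$-basis of $M$. By Proposition~\ref{prop:lattice subset}, $J_L=J_{\Z^2}$ if and only if $L=\Z^2$. So the lemma reduces to showing that $L=\Z^2$ if and only if the primitive ray generators $u_1,\ldots,u_{k+1}\in N$ $\Z$-span all of $\Z^2$.

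Second, I would establish this last equivalence using the fact that in dimension~$2$ the duality sending a primitive $u\in N$ to its primitive normal $\nu\in M$ is realized by a unimodular map. Fix the standard $\Z$-basis of $N$ and its dual basis of $M$; then for $u_i=(a_i,b_i)\in N$ primitive (i.e.\ $\gcd(a_i,b_i)=1$), one has, up to sign, $\nu_i=(-b_i,a_i)$, since $(-b_i,a_i)$ is primitive and satisfies $\langle \nu_i,u_i\rangle=0$. The linear map
\[
\varphi\colon \Z^2\longrightarrow \Z^2, \qquad (a,b)\longmapsto (-b,a),
\]
lies in $\mathrm{GL}_2(\Z)$ (it has determinant $1$), so it carries the $\Z$-sublattice of $N$ generated by the $u_i$ bijectively onto the $\Z$-sublattice of $M$ generated by $\{\pm \nu_i\}=\{\nu_i\}_\Z$. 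Since $-\nu_i$ and $\nu_i$ generate the same subgroup, the sign ambiguity in the choice of $\nu_i$ plays no role, and we conclude $\langle u_1,\ldots,u_{k+1}\rangle_\Z = \Z^2$ iff $L = \langle \nu_1,\ldots,\nu_{k+1}\rangle_\Z = \Z^2$.

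I do not expect any serious obstacles. The only thing requiring care is step two: keeping straight which lattice ($N$ or $M$) each vector lives in, and confirming that the sign ambiguity in choosing each $\nu_i$ does not alter the $\Z$-span. Everything else is either built into the lattice ideal lemma cited from the appendix, or is the elementary observation that the $90^\circ$ rotation is a lattice automorphism of $\Z^2$.
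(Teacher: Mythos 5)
Your proposal is correct and follows essentially the same route as the paper, which states the lemma as an immediate corollary of Proposition~\ref{prop:lattice subset} after using Corollary~\ref{cor:lattice LPL} to identify $J_{\rho_1}+\cdots+J_{\rho_{k+1}}$ with the lattice ideal $J_L$ of the span of the normals. The only detail you spell out that the paper leaves implicit is the final translation between the span of the normals in $M$ and the span of the primitive ray generators in $N$ via the unimodular quarter-turn, and you handle the sign ambiguity correctly.
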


We now continue our study of fans in $\R^2$.

\begin{proposition}\label{prop:together}
Let $\Sigma$ be a complete fan in $\R^2$, and consider a splitting $\Sigma=\Delta'\cup\Delta''$ into two clumps, with intersection $\Delta'\cap \Delta''$ equal to two rays $\rho_1\cup\rho_{k+1}$. Then we have
\begin{equation}\label{equation:image full sharp}
\mathrm{Im}\bigg( \#\colon PLP(\Delta')\oplus 
PLP(\Delta'') \to
PLP(\rho_1\cup\rho_{k+1})\bigg)
 = \bigg\{ (f,g)\in PLP(\rho_1\cup\rho_{k+1})\ \bigg| \ f-g\in J_{\Delta'}+J_{\Delta''}\bigg\}.
\end{equation}
\end{proposition}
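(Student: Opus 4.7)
The plan is to decompose the Mayer--Vietoris map $\#$ into the difference of two pure restriction maps and then reduce everything to two applications of Lemma~\ref{lemma:getting-there}, one for each clump. Let $r'\colon PLP(\Delta')\to PLP(\rho_1\cup\rho_{k+1})$ and $r''\colon PLP(\Delta'')\to PLP(\rho_1\cup\rho_{k+1})$ denote restriction to the common boundary. Then by the explicit formula \eqref{equation:sharp}, we have $\#\big((F,G)\big)=r'(F)-r''(G)$, so as a subgroup of $PLP(\rho_1\cup\rho_{k+1})$,
\[
\mathrm{Im}(\#)\;=\;\mathrm{Im}(r')+\mathrm{Im}(r'')
\]
(since these are abelian groups, differences of elements generate the same subgroup as sums).

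Next I would observe that, because $\Sigma$ is a complete fan in $\R^2$ and $\Delta',\Delta''$ are the two clumps produced by the splitting, each of $\Delta'$ and $\Delta''$ is itself a clump whose two outer rays are exactly $\rho_1$ and $\rho_{k+1}$. Thus Lemma~\ref{lemma:getting-there} applies verbatim to each clump and identifies
\[
\mathrm{Im}(r')=\{(f,g)\in PLP(\rho_1\cup\rho_{k+1})\mid f-g\in J_{\Delta'}\},\qquad
\mathrm{Im}(r'')=\{(f,g)\in PLP(\rho_1\cup\rho_{k+1})\mid f-g\in J_{\Delta''}\}.
\]
It then remains to verify the set-theoretic identity
\[
\mathrm{Im}(r')+\mathrm{Im}(r'')=\bigl\{(f,g)\in PLP(\rho_1\cup\rho_{k+1})\mid f-g\in J_{\Delta'}+J_{\Delta''}\bigr\}.
\]
The inclusion $\subseteq$ is immediate: for $(f_1,g_1)\in\mathrm{Im}(r')$ and $(f_2,g_2)\in\mathrm{Im}(r'')$, the sum $(f_1+f_2,g_1+g_2)$ has difference $(f_1-g_1)+(f_2-g_2)\in J_{\Delta'}+J_{\Delta''}$. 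For the reverse inclusion, given $(f,g)$ with $f-g=u+v$ for some $u\in J_{\Delta'}$, $v\in J_{\Delta''}$, I would exhibit the decomposition
\[
(f,g)=(f,\,g+v)+(0,\,-v),
\]
checking that $(f,g+v)\in\mathrm{Im}(r')$ since $f-(g+v)=u\in J_{\Delta'}$, and that $(0,-v)\in\mathrm{Im}(r'')$ since $0-(-v)=v\in J_{\Delta''}$.

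The main potential obstacle is essentially bookkeeping rather than mathematical depth: ensuring that the representatives chosen for the classes $f,g,u,v$ are consistent across the two clumps (cf.\ Remarks~\ref{remark:equivalence reps} and~\ref{remark: representatives}), and confirming that both auxiliary pairs $(f,g+v)$ and $(0,-v)$ genuinely lie in $PLP(\rho_1\cup\rho_{k+1})$---this follows from the inclusions $J_{\Delta'},J_{\Delta''}\subseteq J_{\{0\}}$, which are immediate since each ray ideal $J_{\rho_i}$ is generated by classes of the form $1-\boldsymbol{\alpha}^{\nu}$ with $\nu\in M$. With these routine checks in place, the two inclusions combine with the first paragraph to yield \eqref{equation:image full sharp}.
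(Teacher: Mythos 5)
Your proposal is correct and follows essentially the same route as the paper's proof: both directions reduce to two applications of Lemma~\ref{lemma:getting-there}, with the reverse inclusion handled by splitting $(f,g)$ into a pair whose difference lies in $J_{\Delta'}$ plus a pair whose difference lies in $J_{\Delta''}$ (the paper writes $f-g=A-B$ and uses a slightly different but equivalent explicit decomposition). Your framing via $\mathrm{Im}(\#)=\mathrm{Im}(r')+\mathrm{Im}(r'')$ is a clean way to organize the same argument.
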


\begin{proof}
Let $\Sigma=\Delta'\cup\Delta''$ be such a splitting.
Denote the maximal cones of $\Delta'$ by
$\sigma_1,\ldots,\sigma_k$ and the rays by 
$\rho_1,\ldots,\rho_{k+1}$, numbered so that 
$\sigma_i\cap\sigma_{i+1}=\rho_{i+1}$ for $1\leq i\leq k-1$.
Similarly, denote the maximal cones of $\Delta''$ by
$\tau_1,\ldots,\tau_\ell$ and the rays by
$\delta_1,\ldots,\delta_{\ell+1}$, numbered so that 
$\tau_i\cap\tau_{i+1}=\delta_{i+1}$ for $1\leq i\leq \ell-1$, and so
that $\rho_1=\delta_{\ell+1}= \tau_{\ell}\cap \sigma_1$
and $\rho_{k+1}=\delta_{1}= \tau_{1}\cap \sigma_k$.

We first show that the left-hand side of \eqref{equation:image full sharp}
is contained in the right-hand side.  Let
\[
((F_1',\dots,F_k'),(F_1'',\dots,F_\ell''))
\in PLP(\Delta')\oplus PLP(\Delta'').
\]
Then by Lemma~\ref{lemma:getting-there}, we have that
$\#\Big((F_1',\dots,F_k')\Big) = (f',g')$ satisfying $f'-g'\in J_{\Delta'}$, and that
$\#\Big((F_1'',\dots,F_\ell'')\Big) = (f'',g'')$
satisfying $f''-g''\in J_{\Delta''}$.  So we have
\begin{eqnarray*}
\#\Big((F_1',\dots,F_k'),(F_1'',\dots,F_\ell'')\Big) & =  & (f'-g') - 
(f''-g'')\in J_{\Delta'}+J_{\Delta''},
\end{eqnarray*}
as desired.

Next, we show that the right-hand side of \eqref{equation:image full sharp}
is contained in the left-hand side.  Suppose $(f,g)\in PLP(\rho_1\cup\rho_{k+1})$ satisfies $f-g\in J_{\Delta'}+J_{\Delta''}$.
Then we can write
$f-g=A-B$, where $A\in J_{\Delta'}$ and $B\in J_{\Delta''}$.  Setting
\begin{eqnarray*}
f' = (A-B) & \phantom{MOVE} & g' = -B\\
f''=f-(A-B)=g & & g'' = g+B,
\end{eqnarray*}
we compute $f'+f''=f$ and $g'+g''=g$.  Moreover, we have $f'-g' = A\in J_{\Delta'}$ and
also 
$f''-g''=-B\in J_{\Delta''}$.  Then by Lemma~\ref{lemma:getting-there},
we know that there must exist $(F_1',\dots,F_k')$ and $(F_1'',\dots,F_\ell'')$
satisfying $\#\Big((F_1',\dots,F_k')\Big) = (f',g')$ and
$\#\Big((F_1'',\dots,F_\ell'')\Big) = (f'',g'')$.  But then
\begin{eqnarray*}
\#\Bigg(\Big((F_1',\dots,F_k'), -(F_1'',\dots,F_\ell'')\Big)\Bigg) &= &\#\Big((F_1',\dots,F_k')\Big) -  
\#\Big(-(F_1'',\dots,F_\ell'')\Big) \\
& = & (f',g')+ (f'',g'')\\
& = & (f,g),
\end{eqnarray*}
and so $(f,g)\in \mathrm{Im}(\#)$.  This completes the proof.
\end{proof}

We may now prove our main result for fans in $\R^2$.

\begin{theorem}\label{theorem:R^2}
Let $\Sigma$ be a fan in $\R^2$.
\begin{enumerate}
\item\label{item:1}If $\Sigma$ is incomplete, then $K^*_T(X_\Sigma)\cong P_K(\Sigma)$ as a $K^*_T$-algebra.
\item\label{item:2}If $\Sigma$ is complete, then $K^*_T(X_\Sigma)\cong P_K(\Sigma)$ as a $K^*_T$-algebra if and only if the primitive generators of the rays of $\Sigma$ span (over $\Z$) the lattice $\Z^2$.
\end{enumerate}
\end{theorem}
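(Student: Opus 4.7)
\medskip

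The plan is to leverage Theorem~\ref{thm:2d}, Proposition~\ref{prop:together}, and Lemma~\ref{lemma:lil-for-2d} to turn the cohomological question into a lattice-theoretic one.

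Part~\eqref{item:1} is immediate: Lemma~\ref{lemma:incomplete} already says $K^*_T(X_\Sigma)\cong P_K(\Sigma)$ whenever $\Sigma$ is an incomplete fan in $\R^2$.

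For part~\eqref{item:2}, let $\Sigma$ be complete in $\R^2$. Since $\Sigma$ is complete and its maximal cones are strongly convex, $\Sigma$ has at least three maximal cones and so admits a proper splitting $\Sigma=\Delta'\cup\Delta''$ in which both $\Delta'$ and $\Delta''$ are clumps whose intersection is the union of exactly two rays $\rho_1\cup\rho_{k+1}$ (the two ``boundary'' rays common to the two halves). By Theorem~\ref{thm:2d}, the desired isomorphism $K^*_T(X_\Sigma)\cong P_K(\Sigma)$ holds if and only if the map
\[
\#\colon PLP(\Delta')\oplus PLP(\Delta'')\longrightarrow PLP(\rho_1\cup\rho_{k+1})
\]
is surjective for this (or any) proper splitting, so the theorem reduces to characterising when $\#$ is onto.

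Next I would unpack both the codomain and the image of $\#$ as ideal-theoretic conditions. A pair $(f,g)$ lies in $PLP(\rho_1\cup\rho_{k+1})$ exactly when $f-g\in J_{\{0\}}=\langle 1-\alpha,1-\beta\rangle$. Proposition~\ref{prop:together} identifies the image of $\#$ with the pairs $(f,g)\in PLP(\rho_1\cup\rho_{k+1})$ for which $f-g\in J_{\Delta'}+J_{\Delta''}$. Since every ray $\rho$ of $\Sigma$ appears in $\Delta'$ or $\Delta''$, we have $J_{\Delta'}+J_{\Delta''}=\sum_{\rho} J_\rho$, summed over all rays of $\Sigma$; and each $J_\rho$ is contained in $J_{\{0\}}$. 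Consequently $\#$ is surjective if and only if $J_{\{0\}}\subseteq J_{\Delta'}+J_{\Delta''}$, equivalently if and only if $J_{\Delta'}+J_{\Delta''}=\langle 1-\alpha,1-\beta\rangle$.

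Finally, Lemma~\ref{lemma:lil-for-2d} translates this last equality directly into the lattice condition: it holds if and only if the primitive generators of the rays of $\Sigma$ span $\Z^2$ over $\Z$. Combining these equivalences gives part~\eqref{item:2}. No step should present a real obstacle; the only subtlety I want to check is that such a ``two-clump'' proper splitting really exists for every complete fan in $\R^2$, which is straightforward since completeness plus strong convexity forces at least three maximal cones and one can group them into two consecutive arcs.
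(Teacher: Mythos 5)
Your proposal is correct and follows essentially the same route as the paper's own proof: reduce to surjectivity of $\#$ via Theorem~\ref{thm:2d} for a two-clump splitting, identify the image via Proposition~\ref{prop:together}, and translate the resulting ideal equality $J_{\Delta'}+J_{\Delta''}=\langle 1-\alpha,1-\beta\rangle$ into the lattice condition via Lemma~\ref{lemma:lil-for-2d}. The only additions are your explicit remarks on the existence of the two-clump splitting and on $J_{\Delta'}+J_{\Delta''}=\sum_\rho J_\rho$, both of which the paper treats as immediate.
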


\begin{proof}
For \eqref{item:1} nothing beyond Lemma \ref{lemma:incomplete} is required. We now turn our attention to \eqref{item:2} and take a complete fan $\Sigma$ in $\R^2$. It is immediate that for any such fan we may choose a splitting into two clumps, $\Sigma=\Delta'\cup\Delta''$ with intersection $\Delta'\cap\Delta''$ equal to the union of two rays $\rho_{1}\cup\rho_{k+1}$. By Theorem \ref{thm:2d} it is now necessary and sufficient to show that, for our splitting, the map $\#$ in \eqref{equation:mv} is surjective if and only if the primitive generators of the rays of $\Sigma$ span (over $\Z$) the lattice $\Z^2$.

By Proposition \ref{prop:together}, the image of $\#$ is 
\[
\bigg\{ (f,g)\in PLP(\rho_1\cup\rho_{k+1})\ \bigg| \ f-g\in J_{\Delta'}+J_{\Delta''}\bigg\},
\]
and this is all of $PLP(\rho_1\cup\rho_{k+1})$ if and only if $J_{\Delta'}+J_{\Delta''}=J_{\{0\}}=\langle 1-\alpha,1-\beta\rangle$. By Lemma \ref{lemma:lil-for-2d}, this is the case if and only if the primitive generators of the rays of $\Sigma$ span (over $\Z$) the lattice $\Z^2$, as required.
\end{proof}

\begin{example}[Weighted projective spaces and fake weighted projective spaces]\label{example:fwps}
Amongst the best known examples of toric varieties are \ouremph{weighted projective spaces}; in fact each one is also a \ouremph{fake weighted projective space} (the class of the latter is strictly larger than the class of the former). Fake weighted projective spaces, and their relationship to weighted projective spaces, are discussed in \cite{ak:fwps}.

Let $\Sigma$ be a complete fan in $\R^n$ whose rays have primitive generators $v_0,\ldots,v_n\in\Z^n$ such that $\R^n=\mathrm{Span}_{\R_{\geq0}}(v_0,\ldots,v_n)$; as a consequence one may find coprime $\chi_0,\ldots,\chi_n\in\Z_{>0}$, unique up to order, such that $\chi_0 v_0+\cdots+\chi_n v_n=0$. Then $X_\Sigma$ is a fake weighted projective space with weights $(\chi_0,\ldots,\chi_n)$; if in addition $\Z^n=\mathrm{Span}_{\Z}(v_0,\ldots,v_n)$ then $X_\Sigma$ is also a weighted projective space with weights $(\chi_0,\ldots,\chi_n)$.

From the definitions and using Theorem \ref{theorem:R^2}, it is immediate that if $\Sigma$ is a fan in $\R^2$ such that $X_\Sigma$ is a fake weighted projective space but not a weighted projective space, then $K^*_T(X_\Sigma)$ is {\em not} isomorphic to $P_K(\Sigma)$. We deduce from Theorem \ref{thm:2d} that $K^{\mathrm{odd}}_T(X_\Sigma)\neq0$ and that there is no proper splitting for which the map $\#$ in \eqref{equation:mv} is surjective. On the other hand, if $\Sigma$ is a fan in $\R^2$ such that $X_\Sigma$ is a weighted projective space, then $K^*_T(X_\Sigma)\cong P_K(\Sigma)$ as $K^*_T$-algebras. This latter class includes examples such as $\mathbb{P}(2,3,5)$, which is {\em not} a divisive weighted projective space and hence its equivariant $K$-theory ring is not computed in \cite{HHRW}. 
\end{example}

\begin{example}[Hirzebruch surfaces]\label{example:Hirzebruch}
As discussed in \cite[Example 3.1.16]{colisc:tv}, the Hirzebruch surface $\mathcal{H}_r$ for $r=1,2,3,\ldots$ is the toric variety arising from the complete fan $\Sigma_r$ in $\R^2$ with rays $(1,0)$, $(0,\pm1)$ and $(-1,r)$; in the case of $r=1$, then $\mathcal{H}_r$ is nothing more than the product $\mathbb{P}^1\times\mathbb{P}^1$. Theorem \ref{theorem:R^2} applies and we deduce that $K^*_T(\mathcal{H}_r)\cong P_K(\Sigma_r)$ as a $K^*_T$-algebra, for $r=1,2,3,\ldots$. Of course, $\mathcal{H}_r$ is polytopal and smooth, so the result also follows from \cite{HHRW}. The result may also be deduced from the treatment of smooth toric varieties in the present paper (Section \ref{section:smooth}) without reliance upon the fact that $\mathcal{H}_r$ is polytopal.
\end{example}

\section{A single cone in $\R^n$ and its boundary}\label{section:single-cone}

The preceding sections highlight how Mayer-Vietoris arguments may be applied to toric varieties; crucial to our study is the correspondence between affine pieces and cones, together with an understanding of the map $\#$ of \eqref{equation:4-term}. As we shall see, it is especially useful to continue with the case of a single cone and its interactions with its boundary.

\begin{lemma}\label{lemma:general single cone to boundary}
Let $\sigma\subset \R^n$ be a $d$-dimensional cone with facets $\tau_1,\ldots,\tau_k$, so that $\partial\sigma=\tau_1\cup\cdots\cup\tau_k$. Then we have
\begin{equation}\label{equation:single cone to boundary}
\mathrm{Im}\Big(\#\colon PLP(\sigma)\to PLP(\tau_1\cup\cdots\cup\tau_k)\Big)
= \Big\{ (F_1,\ldots,F_k)\   \Big|\  F_i-F_j\in J_{\tau_i}+J_{\tau_j}     \Big\}.
\end{equation}
\end{lemma}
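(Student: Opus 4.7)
The first step would be to translate both sides of \eqref{equation:single cone to boundary} into concrete algebraic statements. Since $\sigma$ is the unique maximal cone of the fan $\sigma$, the identification \eqref{equation:one-cone} gives $PLP(\sigma) \cong \Z[\alpha_1^{\pm 1}, \ldots, \alpha_n^{\pm 1}]/J_\sigma$, and the restriction map $\#$ sends a class $[F]$ to the tuple $([F] \bmod J_{\tau_i})_{i=1}^k$. The forward inclusion is then immediate: if $(F_1, \ldots, F_k) = \#([F])$, I would pick representatives $F_i = F - a_i$ with $a_i \in J_{\tau_i}$ to conclude $F_i - F_j = a_j - a_i \in J_{\tau_i} + J_{\tau_j}$.

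For the reverse inclusion, my plan is to proceed by induction on $k$, building a representative $F$ by matching one facet at a time. The base case $k=1$ is trivial. For the inductive step, given $(F_1, \ldots, F_{k+1})$ satisfying the pairwise hypothesis, the inductive hypothesis applied to $(F_1, \ldots, F_k)$ produces $G$ with $G \equiv F_i \pmod{J_{\tau_i}}$ for $i \leq k$. To upgrade $G$ to a representative also matching $F_{k+1}$ modulo $J_{\tau_{k+1}}$, I would subtract an element $H$ lying in $\bigcap_{i \leq k} J_{\tau_i}$, which leaves the previously achieved congruences undisturbed. The existence of such an $H$ is equivalent to the containment
\[
G - F_{k+1} \in \Bigl(\bigcap_{i \leq k} J_{\tau_i}\Bigr) + J_{\tau_{k+1}}.
\]

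From the pairwise hypotheses together with the congruences already achieved by $G$, one sees directly that $G - F_{k+1} \in J_{\tau_i} + J_{\tau_{k+1}}$ for each $i \leq k$, so that $G - F_{k+1}$ lies in $\bigcap_{i \leq k}(J_{\tau_i} + J_{\tau_{k+1}})$. The main obstacle is therefore to establish the distributive identity
\[
\bigcap_{i \leq k}\bigl(J_{\tau_i} + J_{\tau_{k+1}}\bigr) \;=\; \Bigl(\bigcap_{i \leq k} J_{\tau_i}\Bigr) + J_{\tau_{k+1}},
\]
which is not a formal consequence of the ring axioms and constitutes the technical heart of the lemma. My intention is to invoke the lattice ideal lemma for Laurent polynomial rings (Corollary~\ref{cor:lattice LPL}) to translate the identity into a corresponding statement about the perpendicular sublattices $\tau_i^\perp \cap M$, and then to exploit the combinatorial structure of the face poset of $\sigma$—in particular the fact that each $\tau_i \cap \tau_{k+1}$ is itself a face of $\sigma$, with a controlled perpendicular lattice—to verify the equality of sublattices. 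Once this distributive identity is in hand, the desired $H$ exists, the induction closes, and the lemma follows.
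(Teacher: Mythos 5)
Your reduction to the ideal-theoretic statement and your forward inclusion are fine, and your inductive strategy --- match $\tau_1,\dots,\tau_k$ first, then correct by an element $H\in\bigcap_{i\le k}J_{\tau_i}$ that hits $F_{k+1}$ modulo $J_{\tau_{k+1}}$ --- is in spirit the same as the paper's telescoping construction, whose $j$-th correction term $(1-\boldsymbol\alpha^{\nu_1})\cdots(1-\boldsymbol\alpha^{\nu_{j-1}})F_j^{(j-1)}$ manifestly lies in the product ideal $J_{\tau_1}\cdots J_{\tau_{j-1}}\subseteq\bigcap_{i<j}J_{\tau_i}$. The gap is in how you propose to produce $H$. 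You correctly isolate the distributive identity $\bigcap_{i\le k}\bigl(J_{\tau_i}+J_{\tau_{k+1}}\bigr)=\bigl(\bigcap_{i\le k}J_{\tau_i}\bigr)+J_{\tau_{k+1}}$ as the crux, but you do not prove it, and the tool you name cannot prove it: Corollary~\ref{cor:lattice LPL} is compatible with \emph{sums} of lattice ideals (sum of lattices corresponds to sum of ideals) but not with \emph{intersections}. An intersection of lattice ideals is in general not a lattice ideal --- already $\langle 1-\alpha\rangle\cap\langle 1-\beta\rangle=\langle(1-\alpha)(1-\beta)\rangle$, whereas the lattice ideal of $\langle e_1\rangle\cap\langle e_2\rangle=\{0\}$ is the zero ideal --- so there is no ``corresponding statement about perpendicular sublattices'' into which your identity translates. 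As written, the technical heart of the lemma is left unestablished.

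The paper avoids any distributive law. It writes $(F_1,\dots,F_k)=(F_1,\dots,F_1)+(1-\boldsymbol\alpha^{\nu_1})(0,F_2^{(1)},\dots,F_k^{(1)})$ using only the pairwise hypothesis $F_i-F_1\in J_{\tau_1}+J_{\tau_i}$, observes that the quotient tuple $(0,F_2^{(1)},\dots,F_k^{(1)})$ again satisfies the hypotheses of the lemma (because $1-\boldsymbol\alpha^{\nu_1}$ remains regular modulo each $J_{\tau_i}$ for $i\ge2$), and iterates; the correction at each stage is obtained by applying the pairwise hypothesis to the quotient tuple, never by intersecting ideals. To salvage your route you would need either to prove the distributive identity directly for these binomial ideals (a genuine commutative-algebra fact, not a formal consequence of the ring axioms, and not a consequence of the lattice ideal lemma), or to replace $\bigcap_{i\le k}J_{\tau_i}$ by the product $J_{\tau_1}\cdots J_{\tau_k}$ and run the paper's divisibility argument, at which point you have reproduced the paper's proof.
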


\begin{remarks}\label{remarks:sharp}
\begin{enumerate}
\item Strictly, one ought to specify a splitting in order to discuss the map $\#$. Since $\sigma$ is a single cone, we cannot chose a proper splitting. Instead, we take in \eqref{equation:4-term} $\Sigma=\sigma$, $\Delta'=\sigma$ and $\Delta''=\partial\sigma$, so that $\Delta'\cap\Delta''=\partial\sigma$. Strictly, $\#$ is then a map $PLP(\sigma)\oplus PLP(\partial\sigma)\to PLP(\partial\sigma)$. In the lemma, we have restricted to the first summand, and retained the name $\#$ by abuse of notation.
\item Note that in this context, the piecewise condition requires that $F_i-F_j\in J_{\tau_i\cap\tau_j}$. We know that $\tau_i\cap\tau_j$ is a face of each of $\tau_i$ and $\tau_j$. Thus we also have that  $(\tau_i\cap\tau_j)^\perp$ contains both $\tau_i^\perp$ and $\tau_j^\perp$.  Proposition~\ref{prop:lattice subset} then says that $J_{\tau_i\cap\tau_j}\geq J_{\tau_i}+J_{\tau_j}$. This guarantees that the right-hand side of \eqref{equation:single cone to boundary} is indeed a subset of $PLP(\tau_1\cup\cdots\cup\tau_k)$.
\end{enumerate}
\end{remarks}

\begin{proof}[Proof of Lemma \ref{lemma:general single cone to boundary}.]
Let $\sigma$ be a  $d$-dimensional cone in $\R^n$.  That is, $\sigma$ 
is contained
in a $d$-dimensional subspace $V$ of $\R^n$.  The rationality of
$\sigma$ means that $V\cap \Z^n$ is a rank $d$ sublattice of $\Z^n$. Thus, we may apply an element of $SL_n(\Z)$ so that the image $\widehat{\sigma}$ of $\sigma$ is a subset of the subspace given by $x_{d+1}=\cdots=x_n=0$. The $SL_n(\Z)$-transformation induces an equivariant isomorphism of toric varieties $X_\sigma\cong X_{\widehat{\sigma}}$ and hence an isomorphism $PLP(\sigma)\cong PLP(\widehat{\sigma})$.  Thus, without loss of generality, we may assume that $\sigma$ is contained in the coordinate subspace spanned by the standard basis vectors $e_1,\ldots,e_d$.

We note that $PLP(\tau_1\cup\cdots\cup\tau_k)$ consists of tuples $(F_1,\ldots,F_k)$
of Laurent polynomials in $\Z[\alpha_1^{\pm 1},\ldots,\alpha_d^{\pm 1}]$
satisfying the piecewise condition, as follows.
Strictly speaking, each $F_i$ is an equivalence class in a quotient of 
$\Z[\alpha_1^{\pm 1},\ldots,\alpha_n^{\pm 1}]$ by an ideal, but following 
Remark~\ref{remark:equivalence reps}, we may work with representatives 
$F_i\in \Z[\alpha_1^{\pm 1},\ldots,\alpha_n^{\pm 1}]$.
For $F_i$ associated to $\tau_i$, then $F_i$ is a representative of an equivalence class in
\[
\frac{\Z[\alpha_1^{\pm 1},\ldots,\alpha_n^{\pm 1}]}{\langle 1-\alpha_{d+1},\ldots,1-\alpha_n,1-
\boldsymbol\alpha^{\nu_i}\rangle},
\]
where $\nu_i$ is a primitive generator of the lattice $M\cap \tau_i^\perp$. Here, $\tau_i^\perp$ is the concatenation $(\tau_i^{\perp_d},0,\ldots,0)$ where $\tau_i^{\perp_d}$ means the one dimensional subspace of $\R^d=\langle e_1,\ldots, e_d\rangle$ orthogonal to $\tau_i$ when the latter is considered in $\R^d$.

In practice, this means that we may view all of the $F_i$, and indeed an 
element $F\in PLP(\sigma)$, as not involving the variables $\alpha_{d+1},\ldots,\alpha_n$, 
by virtue of the fact that they may be replaced by 1 in any representative 
which involves them. Each $F_i$ ($1\leq i\leq k$) is well-defined only up
to a multiple of $(1-\boldsymbol\alpha^{\nu_i})$.  As we shall see, we shall only
require $F_i$ modulo the ideal $J_{\tau_i}$ and thus, choice of representative
of $F_i$ is immaterial to the {\em existence} of a preimage for a tuple $(F_1,\dots,F_k)$. 
Preimages are certainly {\em not} unique.

We now note that $\#(F) = (F,F,\dots,F)$ for any $F\in PLP(\sigma)$. Thus, because
$F-F = 0$, we immediately have 
\begin{equation*}
\mathrm{Im}\Big(\#\colon PLP(\sigma)\to PLP(\tau_1\cup\cdots\cup\tau_k)\Big)
\subseteq \Big\{ (F_1,\ldots,F_k)\   \Big|\  F_i-F_j\in J_{\tau_i}+J_{\tau_j}     \Big\}.
\end{equation*}

To prove the reverse containment, we start with a tuple $(F_1,\dots,F_k)$ and aim to find
$F\in PLP(\sigma)$ so that  $F-F_i=0\in J_{\tau_i}$. Then we shall have $\#(F) =(F,\dots,F)= (F_1,\dots,F_k)$ in the appropriate quotient ring. So take $(F_1,\dots,F_k)$ and write
\[
(F_1,\ldots,F_k)=(F_1,F_1,\ldots,F_1)+(0,F_2-F_1,\ldots,F_k-F_1).
\]
Now, because $(F_1,\ldots,F_k)$ and $(F_1,F_1,\ldots,F_1)$ are piecewise and satisfy
the right-hand side of \eqref{equation:single cone to boundary}, the same things hold
true for $(0,F_2-F_1,\ldots,F_k-F_1)$.  Because $(F_1,\ldots,F_k)$ satisfies
the right-hand side of \eqref{equation:single cone to boundary}, we know that
$F_i-F_1\in J_{\tau_1}+J_{\tau_i}$, so we may write
\[
F_i-F_1=F_{i}^{(1)}(1-\boldsymbol\alpha^{\nu_1}) + G_{i}^{(1)}(1-\boldsymbol\alpha^{\nu_i})
\]
for some Laurent polynomials $F_i^{(1)}$ and $G_i^{(1)}$. Observe that 
\[
F_{i}^{(1)}(1-\boldsymbol\alpha^{\nu_1}) + G_{i}^{(1)}(1-\boldsymbol\alpha^{\nu_i})=F_{i}^{(1)}(1-\boldsymbol\alpha^{\nu_1})
\] 
in $J_{\tau_i}$. Thus
\[
\Big(0,F_2-F_1,\dots,F_k-F_1\Big) = \Big(0,F_{2}^{(1)}(1-\boldsymbol\alpha^{\nu_1}),
\ldots,F_{k}^{(1)}(1-\boldsymbol\alpha^{\nu_1})\Big)\in PLP(\tau_1\cup\cdots
\cup\tau_k),
\]
and so we have
\begin{equation}\label{equation:general recursive}
(F_1,\ldots,F_k)=(F_1,F_1,\ldots,F_1)+(1-\boldsymbol\alpha^{\nu_1})
(0,F_2^{(1)},\ldots,F_k^{(1)}).
\end{equation}

We now observe that $(0,F_2^{(1)},\ldots,F_k^{(1)})$ is piecewise on $\tau_1\cup\cdots\cup\tau_k$. 
This follows from (\ref{equation:general recursive}) and the fact that $1-\boldsymbol\alpha^{\nu_1}$ is non-zero in the ideals $J_{\tau_2},\ldots,J_{\tau_k}$. Hence we may iterate the process. At the next stage,
\begin{eqnarray*}
(F_1,\ldots,F_k)&=&(F_1,F_1,\ldots,F_1)+(1-\boldsymbol\alpha^{\nu_1})
(0,F_2^{(1)},\ldots,F_k^{(1)})\\
&=&(F_1,F_1,\ldots,F_1)+(1-\boldsymbol\alpha^{\nu_1})(0,F_2^{(1)},\ldots,F_2^{(1)})\\
&&+(1-\boldsymbol\alpha^{\nu_1})(0,0,F_3^{(1)}-F_2^{(1)},\ldots,F_k^{(1)}-F_2^{(1)})\\
&=&(F_1,F_1,\ldots,F_1)+(1-\boldsymbol\alpha^{\nu_1})(0,F_2^{(1)},\ldots,F_2^{(1)})\\
&&+(1-\boldsymbol\alpha^{\nu_1})(1-\boldsymbol\alpha^{\nu_2})(0,0,F_3^{(2)},\ldots,F_k^{(2)}),
\end{eqnarray*}
where $(1-\boldsymbol\alpha^{\nu_2})F_i^{(2)}=F_i^{(1)}-F_2^{(1)}$ (modulo $J_{\tau_i}$).
Eventually one finds that the tuple
\begin{eqnarray*}
(F_1,\ldots,F_k)&=&(F_1,F_1,\ldots,F_1)+(1-\boldsymbol\alpha^{\nu_1})(0,F_2^{(1)},\ldots,F_2^{(1)})\\
&&+(1-\boldsymbol\alpha^{\nu_1})(1-\boldsymbol\alpha^{\nu_2})(0,0,F_3^{(2)},\ldots,F_3^{(2)})\\
&&\vdots\\
&&+(1-\boldsymbol\alpha^{\nu_1})\cdots(1-\boldsymbol\alpha^{\nu_{k-1}})(0,\ldots,0,F_k^{(k-1)}),
\end{eqnarray*}
and that the Laurent polynomial
\begin{eqnarray*}
F & := & F_1+(1-\boldsymbol\alpha^{\nu_1})F_2^{(1)}+(1-\boldsymbol\alpha^{\nu_1})
(1-\boldsymbol\alpha^{\nu_2})F_3^{(2)}
+\cdots+(1-\boldsymbol\alpha^{\nu_1})\cdots(1-\boldsymbol\alpha^{\nu_{k-1}})F_k^{(k-1)}\\
& \in & PLP(\sigma)
\end{eqnarray*}
 satisfies
$\#(F) = (F_1,\ldots,F_k)$.  This completes the proof.
\end{proof}

Lemma~\ref{lemma:general single cone to boundary} provides a powerful tool for analysing the image of $\#\colon PLP(\sigma)\to PLP(\partial\sigma)$ when $\sigma$ is a $d$-dimensional cone in $\R^n$. At one extreme, it allows us to deduce in Lemma~\ref{lemma:smooth cone} that $\#$ is surjective when $\sigma$ is smooth. At the other extreme, there exist non-simplicial $\sigma$ with facets $\tau_1$ and $\tau_2$ such that $\tau_1\cap\tau_2$ has codimension strictly less than $2$ inside $\sigma$. Then $J_{\tau_1}+J_{\tau_2}$ has two generators $1-\boldsymbol\alpha^{\nu_1}$ and $1-\boldsymbol\alpha^{\nu_2}$, whereas $J_{\tau_1\cap\tau_2}$ has strictly more than $2$ generators. In these circumstances $PLP(\partial\sigma)$ is strictly larger than the right hand side of \eqref{equation:single cone to boundary} and so, by Lemma~\ref{lemma:general single cone to boundary}, $\#$ is far from surjective.

\section{Smooth toric varieties}\label{section:smooth}

When $\Sigma$ is a smooth, polytopal fan, $K^*_T(X)$ has been identified with $P_K(\Sigma)$ \cite[Corollary 7.2.1]{HHRW}. That proof relied upon the existence of a polytope in order that the symplectic techniques of \cite{HaL} could be applied. However, there do exist smooth fans which are {\em not} polytopal: an example of such a fan in $\R^3$ is provided in \cite[p71]{ful:itv}. In the present section we provide a treatment of smooth fans without reliance on the hypothesis of being polytopal.

Recall that a cone in $\R^n$ is \ouremph{smooth} if its rays form part of a $\Z$-basis of $\Z^n$. A smooth cone of dimension $d$ has $\binom{d}{b}$ faces of dimension $b$ ($1\leq b\leq d$); in particular, it has exactly $d$ rays and  $d$ facets. It is immediate from the definition that every face of a smooth cone is a smooth cone. A fan is smooth if all of its cones are smooth, and every subfan of a smooth fan is smooth.

\begin{lemma}\label{lemma:smooth cone}
Let $\sigma\subset \R^n$ be a smooth $d$-dimensional cone. Then the map
\[
\#\colon PLP(\sigma)\to PLP(\partial\sigma)
\]
is surjective.
\end{lemma}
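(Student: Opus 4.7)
The plan is to show that for a smooth cone, the ``piecewise'' condition defining $PLP(\partial\sigma)$ coincides exactly with the condition $F_i - F_j \in J_{\tau_i} + J_{\tau_j}$ characterising the image of $\#$ in Lemma~\ref{lemma:general single cone to boundary}. Once these two conditions agree, surjectivity is immediate.

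First, as in the proof of Lemma~\ref{lemma:general single cone to boundary}, I would use an element of $SL_n(\Z)$ to place $\sigma$ inside the coordinate subspace spanned by $e_1,\ldots,e_d$; since $\sigma$ is smooth, I can arrange that the primitive ray generators of $\sigma$ are precisely $e_1,\ldots,e_d$. Following Remark~\ref{remark:equivalence reps}, all Laurent polynomial representatives may be taken in $\Z[\alpha_1^{\pm 1},\ldots,\alpha_d^{\pm 1}]$. The facet $\tau_i$ of $\sigma$ opposite the ray $\R_{\geq 0} e_i$ is spanned by $\{e_1,\ldots,e_d\} \setminus \{e_i\}$, so $\tau_i^\perp \cap M$ is generated by $e_i^*$, and hence $J_{\tau_i} = \langle 1-\alpha_i\rangle$.

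Next I would compute $J_{\tau_i \cap \tau_j}$. Because the rays of $\sigma$ are part of a $\Z$-basis, the face $\tau_i \cap \tau_j$ is the smooth cone spanned by $\{e_1,\ldots,e_d\} \setminus \{e_i, e_j\}$, and therefore
\[
J_{\tau_i \cap \tau_j} \;=\; \langle 1-\alpha_i,\, 1-\alpha_j\rangle \;=\; J_{\tau_i} + J_{\tau_j}.
\]
Consequently, the piecewise condition defining an element $(F_1,\ldots,F_d) \in PLP(\partial\sigma)$, namely $F_i - F_j \in J_{\tau_i \cap \tau_j}$ for all $i \neq j$, is identical to the condition $F_i - F_j \in J_{\tau_i} + J_{\tau_j}$ appearing on the right-hand side of \eqref{equation:single cone to boundary}.

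Finally, combining this identification with Lemma~\ref{lemma:general single cone to boundary}, the image of $\#$ is all of $PLP(\partial\sigma)$, establishing surjectivity. The work has essentially already been done in Lemma~\ref{lemma:general single cone to boundary}; the only substantive input from smoothness is the equality $J_{\tau_i \cap \tau_j} = J_{\tau_i} + J_{\tau_j}$, which is precisely where Remark~\ref{remarks:sharp}(2) fails to be an equality in the non-simplicial case flagged at the end of Section~\ref{section:single-cone}. Thus I do not anticipate any genuine obstacle; the proof is a short bookkeeping argument reducing to Lemma~\ref{lemma:general single cone to boundary}.
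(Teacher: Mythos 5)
Your proposal is correct and follows essentially the same route as the paper: normalise $\sigma$ to the standard coordinate subspace cone via $SL_n(\Z)$, observe $J_{\tau_i}=\langle 1-\alpha_i\rangle$ and $J_{\tau_i}+J_{\tau_j}=\langle 1-\alpha_i,1-\alpha_j\rangle=J_{\tau_i\cap\tau_j}$, and conclude via Lemma~\ref{lemma:general single cone to boundary}. You have also correctly identified that this equality of ideals is exactly where smoothness is used.
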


\begin{proof}
Let $\sigma$ be a smooth $d$-dimensional cone in $\R^n$. As in the proof of Lemma~\ref{lemma:general single cone to boundary}, we may apply an element of $SL_n(\Z)$ so that the image $\widehat{\sigma}$ of $\sigma$ is a subset of the positive orthant in $\R^n$. However, in this case, smoothness guarantees we can arrange that $\widehat{\sigma}$ is the \ouremph{standard coordinate
subspace cone} spanned by the standard basis vectors $e_1,\ldots,e_d$. The $SL_n(\Z)$-transformation induces an equivariant isomorphism of toric varieties $X_\sigma\cong X_{\widehat{\sigma}}$ and hence an isomorphism $P_K(\sigma)\cong P_K(\widehat{\sigma})$.  Thus, without loss of generality, we may
assume that $\sigma$ is a standard coordinate subspace cone with rays $e_1,\ldots,e_d$.

Let the facets of $\sigma$ be $\tau_i =\sigma\cap \{ x_i = 0\}$ for $i=1,\dots,d$. Thus $J_{\tau_i}=\langle 1-\alpha_i\rangle$ for each $i$, and for $i\neq j$, $J_{\tau_i}+J_{\tau_j}=\langle 1-\alpha_i,1-\alpha_j\rangle=J_{\tau_i\cap\tau_j}$, where the second equality follows because we are working with a standard coordinate subspace cone. Lemma~\ref{lemma:general single cone to boundary} now guarantees that
\[
\mathrm{Im}\Big(\#\colon PLP(\sigma)\to PLP(\tau_1\cup\cdots\cup\tau_d)\Big)
= \Big\{ (F_1,\ldots,F_d)\   \Big|\  F_i-F_j\in J_{\tau_i\cap \tau_j}\Big\}
= PLP(\tau_1\cup\cdots\cup\tau_d).
\]
Hence $\#$ is surjective, as required.
\end{proof}

Given $F\in PLP(\partial\sigma)$, Lemma \ref{lemma:smooth cone} guarantees that there is $\widetilde{F}\in PLP(\sigma)$ with $\#(\widetilde{F})=F$; we say that $\widetilde{F}$ \ouremph{extends} $F$.

\begin{lemma}\label{lemma:smooth cone subfan}
Let $\sigma\subset \R^n$ be a smooth $d$-dimensional cone, and $\Gamma\subseteq\sigma$ a non-empty subfan. Then the map
\[
\#\colon PLP(\sigma)\to PLP(\Gamma)
\]
is surjective.
\end{lemma}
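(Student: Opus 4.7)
The plan is to reduce to Lemma~\ref{lemma:smooth cone} via induction on the number $k$ of faces of $\sigma$ not in $\Gamma$ (call these the \emph{missing} cones). When $k = 0$, $\Gamma$ contains all faces of $\sigma$ and the restriction map is the identity. When $k \geq 1$, I would select a missing cone $\gamma$ of minimal dimension and adjoin it to $\Gamma$, producing a subfan $\Gamma' := \Gamma \cup \{\gamma\}$ with $k - 1$ missing cones. It then suffices to lift any $F \in PLP(\Gamma)$ to some $F' \in PLP(\Gamma')$, since the inductive hypothesis applied to $\Gamma'$ supplies an element of $PLP(\sigma)$ restricting to $F'$ and hence to $F$.

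To perform this one-step extension, I would use two observations. Minimality of $\dim \gamma$ forces every proper face of $\gamma$ to lie already in $\Gamma$, so $F$ restricts to an element of $PLP(\partial\gamma)$. And $\gamma$, being a face of the smooth cone $\sigma$, is itself smooth, so Lemma~\ref{lemma:smooth cone} applied to $\gamma$ yields a lift $F_\gamma \in PLP(\gamma)$ of $F|_{\partial\gamma}$. I would then define $F' \in PLP(\Gamma')$ by $F'|_\delta := F|_\delta$ for $\delta \in \Gamma$ and $F'|_\gamma := F_\gamma$.

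The point to verify is the piecewise compatibility $F'|_\gamma - F|_\delta \in J_{\gamma \cap \delta}$ for each $\delta \in \Gamma$. The key observation here is that $\gamma \cap \delta$ is a \emph{proper} face of $\gamma$: were $\gamma \cap \delta = \gamma$, then $\gamma$ would be a face of $\delta \in \Gamma$, forcing $\gamma \in \Gamma$ by face-closure and contradicting its missingness. Hence $\gamma \cap \delta$ belongs to $\Gamma$, and both $F_\gamma$ and $F|_\delta$ reduce to $F|_{\gamma \cap \delta}$ modulo $J_{\gamma \cap \delta}$ (the first by our construction of $F_\gamma$, the second by the piecewise condition on $F$), which gives the required congruence. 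I do not expect a serious obstacle; the argument is essentially an inductive repackaging of Lemma~\ref{lemma:smooth cone}, with the only subtlety being the choice of a minimal-dimension missing cone so that Lemma~\ref{lemma:smooth cone} can be applied to $\gamma$ cleanly.
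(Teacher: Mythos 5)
Your proposal is correct and follows essentially the same route as the paper: the paper's proof also extends $F$ to the missing faces of $\sigma$ in order of increasing dimension, applying Lemma~\ref{lemma:smooth cone} to each missing cone whose boundary has already been filled in, just processing a whole dimension level at a time rather than one cone per inductive step. Your explicit verification that $\gamma\cap\delta$ is a proper face of $\gamma$ lying in $\Gamma$ is a welcome detail that the paper leaves implicit.
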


\begin{proof}
Let $\Gamma$ be a non-empty subfan of the smooth $d$-dimensional cone $\sigma$ in $\R^n$. We will work with representatives of equivalence classes on each cone of $\Gamma$ or $\sigma$. In particular, we view an element $F\in PLP(\Gamma)$ as a collection of compatible Laurent polynomials $F=(F_\gamma)_{\gamma\in\Gamma}$.

Suppose we are given $F\in PLP(\Gamma)$. Write $\sigma(k)$ for the collection of $k$-dimensional cones of $\sigma$ ($0\leq k\leq d$), so that $\sigma(d)=\{\sigma\}$. We construct $\widetilde{F}\in PLP(\sigma)$ with $\#(\widetilde{F})=F$ as follows. First $\{0\}$ is always a sub-cone of $\Gamma$ and so we are given $F_{\{0\}}$. For a ray $\rho\in\Gamma$, we are given $F_{\rho}$. If $\rho$ is a ray of $\sigma$ but not $\Gamma$ we may use Lemma~\ref{lemma:smooth cone} to extend $F_{\{0\}}$ to $F_\rho$. In this way we obtain an element of $PLP(\Gamma\cup\sigma(1))$. Now, for a $2$-dimensional cone $\upsilon\in\sigma$, if $\upsilon\in\Gamma$ we are given $F_\upsilon$. If $\upsilon\not\in\Gamma$ we have $F_{\partial\upsilon}$ and hence may appeal to Lemma~\ref{lemma:smooth cone} to obtain $F_\upsilon$. In this way we obtain an element of $PLP(\Gamma\cup\sigma(2))$. We continue this process recursively until we obtain an element $\widetilde{F}\in PLP(\Gamma\cup\sigma(d))=PLP(\sigma)$. By construction, $\#(\widetilde{F})=F$.
\end{proof}

\begin{remark}
Our construction of $\widetilde{F}$ relied upon choices of representatives of equivalence classes. Different choices may yield a different $\widetilde{F}$, but it is clear that they are immaterial to the {\em existence} of $\widetilde{F}$.
\end{remark}

\begin{lemma}\label{lemma:k theory boundary cone PLP}
Let $\sigma\subset \R^n$ be a smooth $d$-dimensional cone ($d\geq1$) in $\R^n$ and for each $k$ with $1\leq k\leq d$ let $\Delta^d_k$ be a union of $k$ facets of $\sigma$. Then
\[
K^*_T(X_{\Delta^d_k})\cong P_K(\Delta^d_k)\mbox{ as $K^*_T$-algebras.}
\]
In particular, $K^*_T(X_{\partial\sigma})\cong P_K(\partial\sigma)$ as $K^*_T$-algebras.
\end{lemma}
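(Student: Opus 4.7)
My plan is a double induction: an outer induction on the dimension $d$ of the ambient smooth cone $\sigma$, and within each $d$, an inner induction on the number $k$ of facets comprising $\Delta^d_k$. Throughout, I will keep track of the algebra structure, which transfers for free since every identification in sight is known to be one of $K^*_T$-algebras.

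The base case $d=1$ forces $k=1$ and $\Delta^1_1 = \{0\}$, so the result is immediate from \eqref{equation:one-cone}. For the outer inductive step, fix $d\geq 2$ and suppose the theorem holds for all smooth cones of dimension less than $d$. Perform an inner induction on $k$. When $k=1$, $\Delta^d_1$ is a single smooth $(d-1)$-dimensional cone and \eqref{equation:one-cone} applies directly. For the inner inductive step, write $\Delta^d_{k+1} = \Delta^d_k \cup \tau_{k+1}$, where $\tau_{k+1}$ is a facet of $\sigma$ not already among those forming $\Delta^d_k$. Set $\Delta' = \Delta^d_k$ and $\Delta'' = \tau_{k+1}$. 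The intersection is
\[
\Delta' \cap \Delta'' = (\tau_1\cap\tau_{k+1}) \cup \cdots \cup (\tau_k\cap\tau_{k+1}),
\]
and each $\tau_i \cap \tau_{k+1}$ is a facet of the smooth $(d-1)$-dimensional cone $\tau_{k+1}$. Hence $\Delta'\cap\Delta''$ is a union of $k$ facets of a smooth $(d-1)$-dimensional cone, and the outer induction hypothesis gives $K^*_T(X_{\Delta'\cap\Delta''}) \cong P_K(\Delta'\cap\Delta'')$ as $K^*_T$-algebras. Meanwhile, the inner induction gives $K^*_T(X_{\Delta'}) \cong P_K(\Delta')$, and \eqref{equation:one-cone} gives $K^*_T(X_{\Delta''}) \cong P_K(\tau_{k+1})$.

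Since odd $K$-theory vanishes in all three of these algebras, the Mayer-Vietoris sequence \eqref{equation:mv} for the splitting $\Delta^d_{k+1} = \Delta' \cup \Delta''$ collapses to a four-term exact sequence of the form \eqref{equation:4-term}. To promote this to a short exact sequence (and thereby conclude both $K^0_T \cong PLP(\Delta^d_{k+1})$ and $K^1_T = 0$), I need surjectivity of the map $\#$. This is the key step, and it is delivered by Lemma~\ref{lemma:smooth cone subfan}: $\Delta'\cap\Delta''$ is a non-empty subfan of the smooth cone $\tau_{k+1}$, so $PLP(\tau_{k+1}) \to PLP(\Delta'\cap\Delta'')$ is surjective, which makes $\#$ surjective from its second summand. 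Thus $K^1_T(X_{\Delta^d_{k+1}}) = 0$, and the exact sequence reads as a three-term short exact sequence of $K^*_T$-algebras identifying $K^0_T(X_{\Delta^d_{k+1}})$ with $PLP(\Delta^d_{k+1})$. Assembling across degrees gives $K^*_T(X_{\Delta^d_{k+1}}) \cong P_K(\Delta^d_{k+1})$ as $K^*_T$-algebras, completing the inner induction and hence the outer one. The final assertion about $\partial\sigma$ is the case $k=d$.

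The only nontrivial hurdle is ensuring that the Mayer-Vietoris map $\#$ really is surjective; everything else is bookkeeping. This is precisely the role of Lemma~\ref{lemma:smooth cone subfan}, which was proved by peeling off faces of $\sigma$ one dimension at a time. I do not expect any further obstacle beyond being careful that the double induction is well-founded, which it is: the outer variable $d$ strictly decreases when we pass from $\sigma$ to a facet, and within fixed $d$ the inner variable $k$ strictly decreases.
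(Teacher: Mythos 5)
Your proposal is correct and follows essentially the same route as the paper: a double induction (outer on the dimension $d$, inner on the number of facets), splitting off one facet at a time, identifying the intersection as a union of facets of a smooth $(d-1)$-cone so the outer hypothesis applies, and invoking Lemma~\ref{lemma:smooth cone subfan} for surjectivity of $\#$ from the single-cone summand. The only difference is cosmetic indexing ($k+1$ versus $s$).
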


\begin{proof}

For each $d$ and $k$ with $1\leq k\leq d$ we say $(d,k)$ holds if and only if $K^*_T(X_{\Delta^d_k})\cong P_K(\Delta^d_k)$ as $K^*_T$-algebras. For each $d\geq1$ we say that $(d,\ast)$ holds if and only if $(d,k)$ holds for all $1\leq k\leq d$. We will prove that $(d,\ast)$ holds for all $d\geq 1$; this will prove the lemma. We proceed by induction on $d$.

For the initial step, we must show that $(1,\ast)$ holds, but this is no more than showing that $(1,1)$ holds. But $\Delta^1_1=\{0\}$ and so $(1,1)$ holds by \eqref{equation:one-cone}. This completes the initial step.

For the inductive step, suppose $(1,\ast),(2,\ast),\ldots,(r-1,\ast)$ hold for some $1\leq r\leq d$. We shall show that $(r,\ast)$ holds. This will complete the inductive step, and the proof.

We show that $(r,\ast)$ holds by an inductive argument of its own. By \eqref{equation:one-cone}, $(r,1)$ holds, so now suppose that $(r,1),(r,2),\ldots,(r,s-1)$ hold for some $1\leq s\leq r$. All that remains is to deduce that $(r,s)$ holds. So let $\sigma\subset\R^n$ be a smooth cone of dimension $r$ and let $\Delta^r_s$ be a union of $s$ facets of $\sigma$; write $\Delta^r_s=\tau_1\cup\cdots\cup\tau_s$ for facets $\tau_1,\ldots,\tau_s$ of $\sigma$. Set $\Delta'=\tau_1\cup\cdots\cup\tau_{s-1}$ and $\Delta''=\tau_s$, so $\Delta'\cup\Delta''=\Delta^r_s$ and
\[
\Delta'\cap \Delta'' = \left(\tau_1\cup\cdots\cup\tau_{s-1}\right) \cap \tau_s
=  (\tau_1\cap\tau_s)\cup\cdots\cup(\tau_{s-1}\cap\tau_s).
\]
We note that each $\tau_i\cap\tau_s$ ($1\leq i\leq s-1$) is a facet of the smooth $(r-1)$-cone $\tau_s\subset\R^n$, so $\Delta'\cap\Delta''$ is of the form $\Delta^{r-1}_{s-1}$. Thus we have $K^*_T(X_{\Gamma})\cong P_K(\Gamma)$ as $K^*_T$-algebras for 
$\Gamma =\Delta',\Delta''$ and $\Delta'\cap\Delta''$ because $(r,s-1)$ holds, by \eqref{equation:one-cone} and because $(r-1,\ast)$ holds, respectively. This means that the Mayer-Vietoris sequence \eqref{equation:mv} splits into a $4$-term sequence
\[
0\to K^{2i}_T(X_{\Delta^r_s})\longrightarrow PLP(\Delta')\oplus PLP(\Delta'')\stackrel{\#}
{\longrightarrow} PLP(\Delta'\cap\Delta'')\longrightarrow K^{2i+1}_T(X_{\Delta^r_s})\to 0.
\]
Since $\Delta'\cap\Delta''$ is a non-empty subfan of the smooth cone $\tau_s$, we may apply Lemma~\ref{lemma:smooth cone subfan} to see that $\#$ is surjective, from the second summand. It follows that 
\[
K^{2i}_T(X_{\Delta^r_s})\cong PLP(\Delta^r_s) \mbox{ and } K^{2i+1}_T(X_{\Delta^r_s})= 0
\]
for each $i\in\Z$. Since all the identifications made in the inductive step were as $K^*_T$-algebras, we may assemble the $4$-term sequences to achieve an algebra isomorphism $K^*_T(X_{\Delta^r_s}) \cong P_K(\Delta^r_s)$,
as desired.
\end{proof}

We are now ready to deduce our main result for smooth toric varieties.

\begin{theorem}\label{theorem:smooth fans}
If $\Sigma$ is a smooth fan in $\R^n$ then $K^*_T(X_\Sigma)\cong P_K(\Sigma)$ as $K^*_T$-algebras.
\end{theorem}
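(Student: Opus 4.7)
The plan is to induct on the number $m$ of maximal cones of $\Sigma$. The base case $m=1$ is the statement of \eqref{equation:one-cone}: when $\Sigma$ consists of a single (smooth) cone together with all of its faces, $X_\Sigma = U_\sigma$ and $K^*_T(U_\sigma) \cong P_K(\sigma)$ directly. For the inductive step, assume the theorem holds for every smooth fan in $\R^n$ having fewer than $m$ maximal cones, and let $\Sigma$ be a smooth fan with maximal cones $\sigma_1,\ldots,\sigma_m$. Choose any one of them, say $\sigma_m$, and split
\[
\Sigma = \Delta' \cup \Delta'', \qquad \Delta' := \sigma_1 \cup \cdots \cup \sigma_{m-1}, \qquad \Delta'' := \sigma_m.
\]
Both $\Delta'$ and $\Delta''$ are smooth sub-fans of $\Sigma$, and this is a proper splitting.

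Next I would identify the three $K^*_T$-algebras needed to feed the Mayer-Vietoris sequence \eqref{equation:mv}. Since $\Delta'$ is a smooth fan with $m-1$ maximal cones, the inductive hypothesis gives $K^*_T(X_{\Delta'}) \cong P_K(\Delta')$. Since $\Delta'' = \sigma_m$ is a single cone, \eqref{equation:one-cone} gives $K^*_T(X_{\Delta''}) \cong P_K(\sigma_m)$. For the intersection, the key observation is that by the fan axioms each $\sigma_i \cap \sigma_m$ is a face of $\sigma_m$, so
\[
\Delta' \cap \Delta'' \;=\; \bigcup_{i=1}^{m-1}\big(\sigma_i \cap \sigma_m\big)
\]
is a sub-fan of the smooth cone $\sigma_m$ whose maximal cones lie among the $\sigma_i \cap \sigma_m$ for $i < m$. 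In particular $\Delta' \cap \Delta''$ has at most $m-1$ maximal cones and is smooth, so the inductive hypothesis applies once more and yields $K^*_T(X_{\Delta'\cap\Delta''}) \cong P_K(\Delta'\cap\Delta'')$ as $K^*_T$-algebras.

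With these three identifications in place, the Mayer-Vietoris sequence \eqref{equation:mv} collapses to the $4$-term exact sequence \eqref{equation:4-term}. The remaining task is to show that the connecting map $\#$ is surjective, which forces $K^{2i+1}_T(X_\Sigma) = 0$ and $K^{2i}_T(X_\Sigma) = \ker(\#) = PLP(\Sigma)$. Here I would invoke Lemma~\ref{lemma:smooth cone subfan}: because $\Delta' \cap \Delta''$ is a non-empty sub-fan of the smooth cone $\sigma_m = \Delta''$, the map $\#\colon PLP(\sigma_m) \to PLP(\Delta'\cap\Delta'')$ is surjective, hence so is $\#$ from the full direct sum $PLP(\Delta') \oplus PLP(\Delta'')$. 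Assembling the resulting $3$-term exact sequences across all $i$ and using that the identifications are $K^*_T$-algebra isomorphisms yields $K^*_T(X_\Sigma) \cong P_K(\Sigma)$ as $K^*_T$-algebras, completing the induction.

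The main conceptual obstacle is the verification that the inductive hypothesis really applies to $\Delta' \cap \Delta''$, and more generally that the induction on ``number of maximal cones'' is well-founded here: one must use the fan axiom that $\sigma_i \cap \sigma_m$ is a face of $\sigma_m$ in order to bound the number of maximal cones of the intersection by $m-1$. Apart from this, all the hard work — identifying the surjectivity of $\#$ on a smooth cone restricted to an arbitrary non-empty sub-fan — has already been packaged into Lemma~\ref{lemma:smooth cone subfan}, which itself rests on the combinatorial calculation of Lemma~\ref{lemma:general single cone to boundary} applied to standard coordinate sub-space cones. No polytopality or completeness hypothesis enters anywhere, which is precisely the improvement over \cite{HHRW} that the section is designed to achieve.
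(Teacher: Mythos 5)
Your argument is correct, but it is organised around a different induction from the one in the paper. The paper enumerates \emph{all} cones of $\Sigma$ in order of non-decreasing dimension and adds them one at a time; at each stage the intersection $\Delta'\cap\Delta''$ is exactly $\partial\sigma_k$, and since a boundary is not an initial segment of that enumeration, the paper must first establish the auxiliary Lemma~\ref{lemma:k theory boundary cone PLP} (itself a double induction over unions of facets) in order to identify $K^*_T(X_{\partial\sigma_k})$ with $P_K(\partial\sigma_k)$. You instead perform a strong induction on the number of maximal cones and peel off one maximal cone $\sigma_m$ at a time; the key observation that each $\sigma_i\cap\sigma_m$ is a face of $\sigma_m$, so that $\Delta'\cap\Delta''$ is a smooth fan with at most $m-1$ maximal cones, lets you apply the \emph{same} inductive hypothesis to the intersection and thereby absorb the content of Lemma~\ref{lemma:k theory boundary cone PLP} into the main induction. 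Both proofs ultimately rest on Lemma~\ref{lemma:smooth cone subfan} for the surjectivity of $\#$ onto $PLP(\Delta'\cap\Delta'')$ (you apply it with the subfan sitting inside the single cone $\sigma_m$, the paper with the subfan $\partial\sigma_k\subseteq\sigma_k$), and both collapse \eqref{equation:mv} to \eqref{equation:4-term} in the same way. Your version is somewhat more economical, at the modest cost that the well-foundedness check on the intersection must be made explicit, which you do.
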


\begin{proof}
Let $\Sigma\subseteq \R^n$ be a smooth fan.  Enumerate all the cones of $\Sigma$ as $\sigma_0,\sigma_1,\sigma_2,\dots,\sigma_N$ in an order so that $\dim(\sigma_i)\leq \dim(\sigma_j)$ whenever $i<j$.  Let $\displaystyle{\Sigma_k = \bigcup_{i=0}^k \sigma_i}$.  We will prove that $K_T^*(\Sigma_k) \cong P_K(\Sigma_k)$ as $K^*_T$-algebras for $k\geq 0$ by induction on $k$.

The base case is when $k=0$. We know that $K^*_T(X_{\sigma_0})\cong P_K(\sigma_0)$ as $K^*_T$-algebras immediately from \eqref{equation:one-cone}.

Assume inductively that the statement holds for $\Sigma_{k-1}$ (some $k\leq N$) and consider $\Sigma_k=\sigma_k\cup\Sigma_{k-1}$. All proper faces of $\sigma_k$ must be in $\Sigma_{k-1}$, because $\dim(\sigma_i)\leq \dim(\sigma_j)$ for all $i<j$. It follows that $\sigma_k\cap\Sigma_{k-1} = \partial\sigma_k$. We now consider the Mayer-Vietoris sequence \eqref{equation:mv} with $\Delta'=\sigma_k$ and $\Delta''=\Sigma_{k-1}$. We have $K^*_T(X_\Gamma)\cong P_K(\Gamma)$ as $K^*_T$-algebras for $\Gamma=\Delta',\Delta''$ and $\Delta'\cap\Delta''$ by \eqref{equation:one-cone}, the inductive hypothesis and Lemma~\ref{lemma:k theory boundary cone PLP}, respectively. This means that the Mayer-Vietoris sequence \eqref{equation:mv} splits into a $4$-term sequence
\[
0\to K^{2i}_T(X_{\Sigma_k})\longrightarrow PLP(\Delta')\oplus PLP(\Delta'')\stackrel{\#}
{\longrightarrow} PLP(\Delta'\cap\Delta'')\longrightarrow K^{2i+1}_T(X_{\Sigma_k})\to 0.
\]
We now apply Lemma~\ref{lemma:smooth cone subfan} to see that $\#$ is surjective, from the first summand. It follows that 
\[
K^{2i}_T(X_{\Sigma_k})\cong PLP(\Sigma_k) \mbox{ and } K^{2i+1}_T(X_{\Sigma_k})= 0
\]
for each $i\in\Z$. Since all the identifications made in the inductive step were as $K^*_T$-algebras, we may assemble the $4$-term sequences to achieve an algebra isomorphism $K^*_T(X_{\Sigma_k}) \cong P_K(\Sigma_k)$, as desired.
\end{proof}

Our final result in this section concerns the map $\#$ in the case of a smooth fan $\Sigma$ and any non-empty subfan $\Gamma$.

\begin{theorem}\label{theorem:sharp onto subfan of smooth fan}
Let $\Sigma$ be a smooth fan in $\R^n$ and $\Gamma\subseteq\Sigma$ a non-empty subfan. Then the map
\[
\#\colon PLP(\Sigma)\to PLP(\Gamma)
\]
is surjective.
\end{theorem}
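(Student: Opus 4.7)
The plan is to mimic the inductive construction used in Lemma~\ref{lemma:smooth cone subfan}, but now at the level of the fan $\Sigma$ rather than a single cone. First I would enumerate the cones of $\Sigma\setminus\Gamma$ as $\sigma_1,\ldots,\sigma_N$ in an order of non-decreasing dimension, and set $\Sigma_0=\Gamma$ and $\Sigma_k=\Sigma_{k-1}\cup\sigma_k$ for $1\leq k\leq N$, so that $\Sigma_N=\Sigma$. Because $\Gamma$ is a subfan (hence closed under taking faces) and the enumeration is by non-decreasing dimension, every proper face $\eta$ of $\sigma_k$ either lies in $\Gamma$ or is some $\sigma_j$ with $\dim(\sigma_j)<\dim(\sigma_k)$, forcing $j<k$; consequently each $\Sigma_k$ is itself a subfan of $\Sigma$ and $\partial\sigma_k\subseteq\Sigma_{k-1}$ for every $k\geq 1$.

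Given $F\in PLP(\Gamma)$, I would construct extensions $F_k\in PLP(\Sigma_k)$ inductively, starting with $F_0=F$. At the $k$-th step, the restriction $F_{k-1}|_{\partial\sigma_k}$ is a well-defined element of $PLP(\partial\sigma_k)$ by the preceding observation. Since $\sigma_k$ is smooth, Lemma~\ref{lemma:smooth cone} produces $H\in PLP(\sigma_k)$ with $\#(H)=F_{k-1}|_{\partial\sigma_k}$, and I define $F_k$ to agree with $F_{k-1}$ on $\Sigma_{k-1}$ and with $H$ on $\sigma_k$. The piecewise compatibility on $\Sigma_k$ is automatic: for any $\tau\in\Sigma_{k-1}$ the intersection $\sigma_k\cap\tau$ is a proper face of $\sigma_k$ (the alternative, $\sigma_k\subseteq\tau$, would make $\sigma_k$ a face of $\tau$, forcing $\sigma_k\in\Gamma$ when $\tau\in\Gamma$, or contradicting the non-decreasing dimension ordering when $\tau=\sigma_j$ with $j<k$), and on this face the polynomials $H$ and $F_{k-1}$ agree by construction. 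Setting $\widetilde F:=F_N\in PLP(\Sigma)$ then gives $\#(\widetilde F)=F$, which yields surjectivity.

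The only substantive ingredient of the argument is the smooth-cone extension supplied by Lemma~\ref{lemma:smooth cone}; everything else is combinatorial bookkeeping on the ordering of cones and the subfan structure. The subtlest point, and the closest thing to an obstacle, is ensuring that $\sigma_k$ is never itself a proper face of a cone already in $\Sigma_{k-1}$—this is precisely what guarantees that a freshly chosen extension $H$ on $\sigma_k$ cannot be overridden by values prescribed on a larger ambient cone of $\Sigma_{k-1}$ containing $\sigma_k$—and this is exactly where both the non-decreasing dimension ordering and the subfan hypothesis on $\Gamma$ are used.
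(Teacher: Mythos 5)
Your proof is correct, and it takes a slightly different route from the paper's. The paper enumerates only the \emph{maximal} cones of $\Sigma$, placing those in $\Gamma$ first, and then adjoins the remaining maximal cones one at a time; since the boundary of a newly adjoined maximal cone need only \emph{partially} meet the region already covered, each step must invoke the stronger extension result, Lemma~\ref{lemma:smooth cone subfan}, which extends from an arbitrary non-empty subfan of a smooth cone. You instead enumerate \emph{all} cones of $\Sigma\setminus\Gamma$ in non-decreasing dimension, which (together with $\Gamma$ being closed under taking faces) guarantees that the entire boundary $\partial\sigma_k$ already lies in $\Sigma_{k-1}$ at each step, so the basic boundary-extension statement, Lemma~\ref{lemma:smooth cone}, suffices. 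This is essentially the same bookkeeping the paper uses in the proofs of Lemma~\ref{lemma:smooth cone subfan} and Theorem~\ref{theorem:smooth fans}, transplanted to the present setting; what it buys you is a proof that bypasses Lemma~\ref{lemma:smooth cone subfan} entirely, at the cost of a longer enumeration. Your identification of the key point --- that $\sigma_k$ can never be a proper face of a cone already in $\Sigma_{k-1}$, so the freshly chosen extension is never constrained from above --- is exactly right, and your verification of piecewise compatibility on $\sigma_k\cap\tau$ is sound (it implicitly uses that agreement modulo $J_{\tau_i}$ on facets propagates to agreement modulo $J_\eta$ on deeper faces $\eta$, which is Proposition~\ref{prop:lattice subset}; it would be worth making that one line explicit).
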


\begin{remark}
In \eqref{equation:4-term}, take $\Delta'=\Sigma$ and $\Delta''=\Gamma$, so that $\Delta'\cap\Delta''=\Gamma$. Strictly, $\#$ is then a map $PLP(\Sigma)\oplus PLP(\Gamma)\to PLP(\Gamma)$, but in the spirit of Remarks \ref{remarks:sharp} we have, in the theorem, restricted to the first summand, and retained the name $\#$ by abuse of notation.
\end{remark}

\begin{proof}[Proof of Theorem \ref{theorem:sharp onto subfan of smooth fan}]
Let $\Sigma\subseteq \R^n$ be a smooth fan.  Enumerate the maximal cones of $\Sigma$ as $\sigma_1,\sigma_2,\dots,\sigma_N$ such that if $\sigma_i\in\Gamma$ for some $i$, then $\sigma_j\in\Gamma$ for all $1\leq j\leq i$. Let $F\in PLP(\Gamma)$. We seek $\widetilde{F}\in PLP(\Sigma)$ such that $\#(\widetilde{F})=F$. As usual, we work with representatives of equivalence classes. Different choices may yield a different $\widetilde{F}$ but are immaterial to the {\em existence} of such an $\widetilde{F}$.

If $\Gamma=\Sigma$ there is nothing to prove, so let $\sigma_r$ be such that $\sigma_i\in\Gamma$ for $1\leq i\leq r-1$,  but $\sigma_r\not\in\Gamma$. Each element of $PLP(\Sigma)$ is determined by its values on the maximal cones of $\Sigma$, so we must extend $F$ to each of $\sigma_{r+1},\ldots,\sigma_N$ in a compatible way.

We will construct $\widetilde{F}_i\in PLP(\Gamma\cup\{\sigma_{r+1},\dots,\sigma_{r+i}\})$ for $i=1,\ldots,N-r$ in an inductive manner. Suppose we have constructed $\widetilde{F}_{i-1}$ (in the case $i=1$, we take $\widetilde{F}_0=F$ as our construction). Now $\partial\sigma_{r+i}\cap(\Gamma\cup\{\sigma_{r+1},\ldots,\sigma_{r+i-1}\})$ is a subfan of $\partial\sigma_{r+i}$, non-empty since both contain $\{0\}$. Hence we may apply Lemma~\ref{lemma:smooth cone subfan} to extend $\widetilde{F}_{i-1}|_{\partial\sigma_{r+i}\cap(\Gamma\cup\{\sigma_{r+1},\ldots,\sigma_{r+i-1}\})}$ to $\sigma_{r+i}$. We define $\widetilde{F_i}$ to have the same value as the extension on $\sigma_{r+i}$ and the same value as $\widetilde{F}_{i-1}$ on $\Gamma\cup\{\sigma_{r+1},\ldots,\sigma_{r+i-1}\}$.

The end result $\widetilde{F}=\widetilde{F}_N$ is an element of $\Gamma\cup\{\sigma_1,\ldots,\sigma_N\}=\Sigma$, and by construction, $\#(\widetilde{F})=F$. This completes the proof.
\end{proof}

\begin{remark}
The analysis in this section relied upon $\Sigma$ being smooth. This assumption is necessary to ensure the existence of an element of $SL_n(\Z)$ so that the image of a given smooth cone is a standard coordinate subspace cone (see the proof of Lemma~\ref{lemma:smooth cone}).
\end{remark}

\section{Fans in $\R^n$ with distant or isolated singular cones}\label{section:distant}

Having dealt with smooth fans in Section \ref{section:smooth}, we turn our attention now to fans in $\R^n$ with singularities. An arbitrary such fan is a step too far: instead, we impose control over the singularities we work with.

\begin{definitions}
Let $\Sigma$ be a singular fan in $\R^n$ and let $\sigma\in\Sigma$. We say that $\sigma$ is an \ouremph{isolated singular cone} in $\Sigma$ if $\sigma$ is a singular cone and $\sigma\cap\tau$ is smooth for every cone $\tau\in\Sigma$ with $\tau\neq\sigma$. We say that $\sigma$ is a \ouremph{distant singular cone} if $\sigma$ is a singular cone and $\sigma\cap\tau=\{0\}$ for every singular cone $\tau\in\Sigma$ with $\tau\neq\sigma$. We say that $\Sigma$ is a \ouremph{fan with  isolated singular cones} if every singular cone of $\Sigma$ is an isolated singular cone, and we say that $\Sigma$ is a \ouremph{fan with  distant singular cones} if every singular cone of $\Sigma$ is a distant singular cone.
\end{definitions}

It is immediate that any face of an isolated or distant singular cone is smooth, and in particular isolated and distant singular cones in $\Sigma$ are maximal in $\Sigma$. Isolated and distant singular cones may or may not be simplicial. 

When the fan $\Sigma$ is complete, these notions have easy interpretations in terms of the corresponding toric varieties: isolated singular cones correspond to isolated singular points in the variety. A distant singular cone corresponds to a singular point in the variety which does not lie on the same proper $T$-invariant subvariety as any other singular point in the variety.

We shall see some examples shortly, but first we state our main result about fans with distant singular cones.

\begin{theorem}\label{thm:distant}
Let $\Sigma$ be a fan in $\R^n$ with distant singular cones. Then $K^*_T(X_\Sigma)\cong P_K(\Sigma)$ as $K^*_T$-algebras.
\end{theorem}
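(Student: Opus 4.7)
The plan is to apply one master Mayer-Vietoris splitting that concentrates all of the non-smoothness on one side. Let $\sigma_1,\dots,\sigma_m$ denote the distant singular cones of $\Sigma$; if $m=0$ then $\Sigma$ is smooth and the statement reduces to Theorem~\ref{theorem:smooth fans}, so I will assume $m\geq 1$. I would set $\Sigma_0\subseteq\Sigma$ to be the subfan consisting of all smooth cones of $\Sigma$ (a subfan because faces of smooth cones are smooth) and $\widetilde\Sigma=\sigma_1\cup\cdots\cup\sigma_m$, which is a valid subfan because the $\sigma_i$ meet pairwise only at $\{0\}$. Every cone of $\Sigma$ is either one of the $\sigma_i$ or smooth, so $\Sigma=\Sigma_0\cup\widetilde\Sigma$; and since proper faces of distant singular cones are smooth, $\Sigma_0\cap\widetilde\Sigma=\partial\sigma_1\cup\cdots\cup\partial\sigma_m$ is a smooth subfan of $\Sigma_0$, non-empty because it contains $\{0\}$.

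Next I would establish $K^*_T(X_\Gamma)\cong P_K(\Gamma)$ as $K^*_T$-algebras for each of $\Gamma=\Sigma_0$, $\widetilde\Sigma$, and $\Sigma_0\cap\widetilde\Sigma$. The first and third are immediate from Theorem~\ref{theorem:smooth fans}, since both fans are smooth. For $\Gamma=\widetilde\Sigma$ I would induct on $m$: the base case $m=1$ is \eqref{equation:one-cone}, and the inductive step uses the splitting $\widetilde\Sigma=(\sigma_1\cup\cdots\cup\sigma_{m-1})\cup\sigma_m$, whose intersection is $\{0\}$. In the resulting four-term Mayer-Vietoris sequence the sharp map is already surjective from the second summand, because the quotient $\Z[{\boldsymbol\alpha}^{\pm 1}]/J_{\sigma_m}\twoheadrightarrow \Z[{\boldsymbol\alpha}^{\pm 1}]/J_{\{0\}}$ is surjective.

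With the three identifications in place, the Mayer-Vietoris sequence for $\Sigma=\Sigma_0\cup\widetilde\Sigma$ reduces to four-term exact sequences
\[
0\to K^{2i}_T(X_\Sigma)\longrightarrow PLP(\Sigma_0)\oplus PLP(\widetilde\Sigma)\stackrel{\#}{\longrightarrow} PLP(\Sigma_0\cap\widetilde\Sigma)\longrightarrow K^{2i+1}_T(X_\Sigma)\to 0,
\]
and the crux is that $\#$ is surjective. I would deduce this from the fact that the restriction $PLP(\Sigma_0)\to PLP(\Sigma_0\cap\widetilde\Sigma)$ is already surjective: $\Sigma_0$ is smooth and $\Sigma_0\cap\widetilde\Sigma$ is a non-empty subfan, so Theorem~\ref{theorem:sharp onto subfan of smooth fan} applies. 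It would then follow that $K^{2i+1}_T(X_\Sigma)=0$ and $K^{2i}_T(X_\Sigma)\cong PLP(\Sigma)$ for every $i$, and assembling the four-term sequences would yield the claimed $K^*_T$-algebra isomorphism.

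The main obstacle I anticipate is choosing this splitting rather than a more obvious one. If one instead split off a single singular cone, $\Sigma=(\Sigma\setminus\{\sigma_m\})\cup\sigma_m$, the intersection would be $\partial\sigma_m$, and one would need surjectivity onto $PLP(\partial\sigma_m)$ from at least one summand. Surjectivity from $PLP(\sigma_m)$ can fail for non-simplicial $\sigma_m$: by Lemma~\ref{lemma:general single cone to boundary}, two facets $\tau_i,\tau_j$ with $\tau_i\cap\tau_j=\{0\}$ give $J_{\tau_i}+J_{\tau_j}\subsetneq J_{\{0\}}$, so the image is a proper subset of $PLP(\partial\sigma_m)$. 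Surjectivity from $PLP(\Sigma\setminus\{\sigma_m\})$ would demand an extension of Theorem~\ref{theorem:sharp onto subfan of smooth fan} to non-smooth ambient fans, which is not available. Pooling \emph{all} singular cones into $\widetilde\Sigma$ sidesteps both issues by arranging for the smooth piece $\Sigma_0$ alone to supply the required surjectivity.
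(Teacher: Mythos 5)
Your proposal is correct and follows essentially the same route as the paper: the same splitting into the smooth subfan and the union of the distant singular cones, the same induction (splitting off one singular cone at a time, with intersection $\{0\}$) to identify $K^*_T$ of the singular part, and the same appeal to Theorem~\ref{theorem:sharp onto subfan of smooth fan} to get surjectivity of $\#$ from the smooth summand onto $PLP(\partial\sigma_1\cup\cdots\cup\partial\sigma_m)$. The only difference is that you spell out the ``easy induction'' and the reason the naive one-cone-at-a-time splitting fails, both of which the paper leaves implicit.
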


\begin{proof}
Let $\Sigma$ be a fan in $\R^n$ with distant singular cones $\sigma_1,\ldots,\sigma_k$. In the Mayer Vietoris sequence \eqref{equation:mv} take $\Delta'=\Sigma\setminus\{\stackrel{\circ}{\sigma_1},\ldots,\stackrel{\circ}{\sigma_k}\}$ and $\Delta''=\sigma_1\cup\cdots\cup\sigma_k$ so that $\Delta'\cap\Delta''=\partial\sigma_1\cup\cdots\cup\partial\sigma_k$. In particular, $\Delta'$ is a smooth fan, and $\Delta'\cap\Delta''$ is a subfan of $\Delta'$. An easy induction argument based on \eqref{equation:one-cone}, Mayer-Vietoris and the fact that the $\sigma_i$ are distant allows one to deduce that $K^*_T(X_{\sigma_1\cup\cdots\cup\sigma_k})\cong P_K(\sigma_1\cup\cdots\cup\sigma_k)$ as $K^*_T$-algebras. This, and Theorem \ref{theorem:smooth fans} allows us to conclude that the Mayer Vietoris sequence reduces to a 4-term exact sequence as in \eqref{equation:4-term},
\[
0\to K^{2i}_T(X_{\Sigma})\longrightarrow PLP(\Delta')\oplus PLP(\Delta'')\stackrel{\#}
{\longrightarrow} PLP(\Delta'\cap\Delta'')\longrightarrow K^{2i+1}_T(X_{\Sigma})\to 0.
\]
We now apply Theorem \ref{theorem:sharp onto subfan of smooth fan} to see that $\#$ is surjective (from the first summand), from which it follows that 
\[
K^{2i}_T(X_{\Sigma})\cong PLP(\Sigma) \mbox{ and } K^{2i+1}_T(X_{\Sigma})=0.
\]
Since all the identifications made in the inductive step were as $K^*_T$-algebras, we may assemble the $4$-term sequences to achieve an algebra isomorphism $K^*_T(X_\Sigma) \cong P_K(\Sigma)$, as desired.
\end{proof}

One might hope that a version of Theorem \ref{thm:distant} applies for fans with isolated singular cones. However, in that case, the \lq easy induction\rq\ mentioned in the proof of Theorem \ref{thm:distant} fails, since we have no way to analyze surjectivity of the map $\#$ in that set-up.

We conclude by providing examples of fans for which Theorem \ref{thm:distant} applies. It is trivial to construct non-complete fans with distant singularities (by taking a collection of singular cones, disjoint other than at $\{0\}$) so we restrict attention to examples for which the fan is complete.

\begin{example}\label{example:square based pyramid}
Let $\Sigma'$ be any complete, smooth fan in $\R^n$ with rays $\rho'_1,\ldots,\rho'_k$ for some $k\geq n+1$. Construct the complete fan $\Sigma$ in $\R^{n+1}$ as follows. To the primitive generator of each $\rho_i'$, adjoin a $1$ to give a primitive element of $\Z^{n+1}$; this specifies rays $\rho_1,\ldots,\rho_k$ in $\R^{n+1}$. Let $\rho$ be the ray in $\R^{n+1}$ with primitive generator $(0,\ldots,0,-1)$. Then $\Sigma$ is the fan with rays $\rho_1,\ldots,\rho_k,\rho$, and such that a collection $\mathcal{C}$ of rays generates a maximal cone of $\Sigma$ if and only if
\[
\mathcal{C}=\{\rho,\rho_{i_1},\ldots,\rho_{i_n}~|~\rho'_{i_1},\ldots,\rho'_{i_n}\mbox{ generate a maximal cone of }\Sigma'\}\mbox{ or }
\mathcal{C}=\{\rho_1,\ldots,\rho_k\}.
\]
Each maximal cone involving $\rho$ is smooth; this follows from the two facts that its projection to the hyperplane $x_{n+1}=0$ is a maximal cone of the smooth fan $\Sigma'$, and that it contains the ray $\rho$. Now consider the maximal cone $\langle\rho_1,\ldots,\rho_k\rangle$. If $k=n+1$, it follows that $\Sigma'$ was the fan of the unweighted projective space $\C P^{n}$. In these circumstances, it is straightforward to check, via \cite[Proposition 2.1]{ak:fwps}, that $\Sigma$ is the fan for the weighted projective space $\mathbb{P}(1,\ldots,1,n+1)$; this is a divisive example, so of little new interest. However, if $k>n+1$ the maximal cone is not simplicial and is a singular cone which is both distant and isolated. Thus Theorem \ref{thm:distant} applies and we may conclude that $K^*_T(X_\Sigma)\cong P_K(\Sigma)$ as a $K^*_T$-algebra.

If we choose $\Sigma'$ to have at least $n+2$ rays, we ensure that the maximal cone $\langle\rho_1,\ldots,\rho_k\rangle$ of $\Sigma$ is not smooth. It follows that there are examples of fans for which Theorem \ref{thm:distant} applies in all dimensions greater than $1$.

As an explicit example of this construction, let $\Sigma'=\Sigma_1$, the fan in $\R^2$ corresponding to the Hirzebruch surface $\mathcal{H}_1\cong\mathbb{P}^1\times\mathbb{P}^1$ (see Example \ref{example:Hirzebruch}). The fan $\Sigma$ in $\R^3$ is shown in Figure \ref{figure:distant-sings}. In more detail, $\Sigma$ has five rays with primitive generators
\[
\rho_1=\left( \begin{array}{r}1\\0\\1\end{array}\right), \rho_2=\left( \begin{array}{r}0\\1\\1\end{array}\right), \rho_3=\left( \begin{array}{r}-1\\0\\1\end{array}\right), \rho_4=\left( \begin{array}{r}0\\-1\\1\end{array}\right), \rho_5=\left( \begin{array}{r}0\\0\\-1\end{array}\right),
\]
five maximal cones
\[
\langle\rho_1,\rho_2,\rho_3,\rho_4\rangle, \langle\rho_1,\rho_2,\rho_5\rangle, \langle\rho_1,\rho_4,\rho_5\rangle, \langle\rho_2,\rho_3,\rho_5\rangle, \langle\rho_3,\rho_4,\rho_5\rangle,
\]
and is the normal fan to a square-based pyramid. Our analysis guarantees that $K^*_T(X_\Sigma)\cong P_K(\Sigma)$ as a $K^*_T$-algebra.
\end{example}

In the absence of the construction of $\Sigma$ from $\Sigma'$, one could check by hand that the fan $\Sigma$ in Figure \ref{figure:distant-sings} is a singular, complete, non-simplical, polytopal fan which has distant singular cones. However, in such circumstances, we prefer to make use of the packages {\tt Polyhedra} and {\tt NormalToricVarieties} for the computer algebra system {\tt Macaulay2}. We supply code for the fan $\Sigma$ shown in Figure \ref{figure:distant-sings} in Appendix \ref{M2 code}. Code for the other explicit examples in this section is very similar, and is available from either author upon request.

\begin{figure}[htb]
\includegraphics[height=2in]{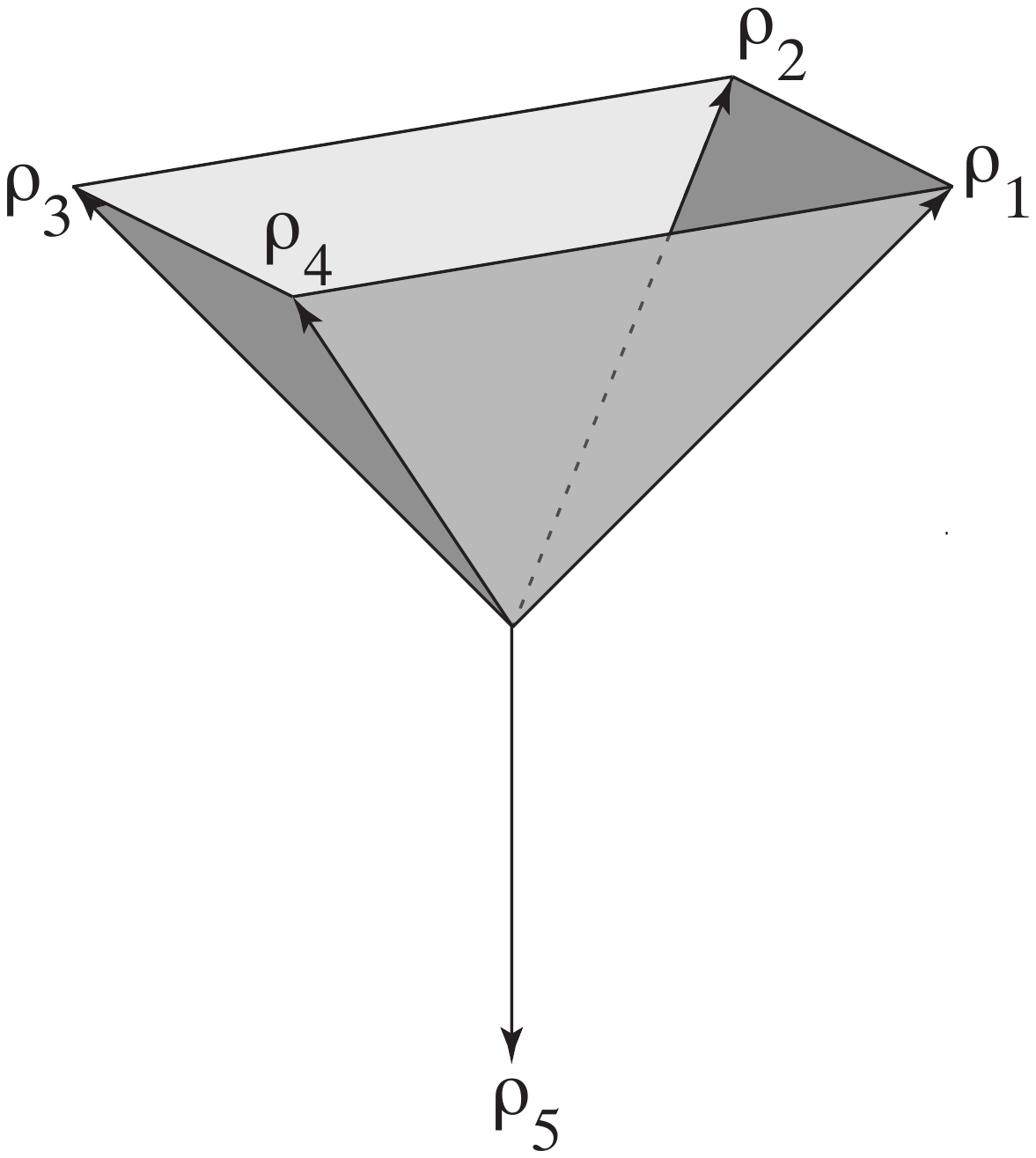}\hspace*{12em}\includegraphics[height=2in]{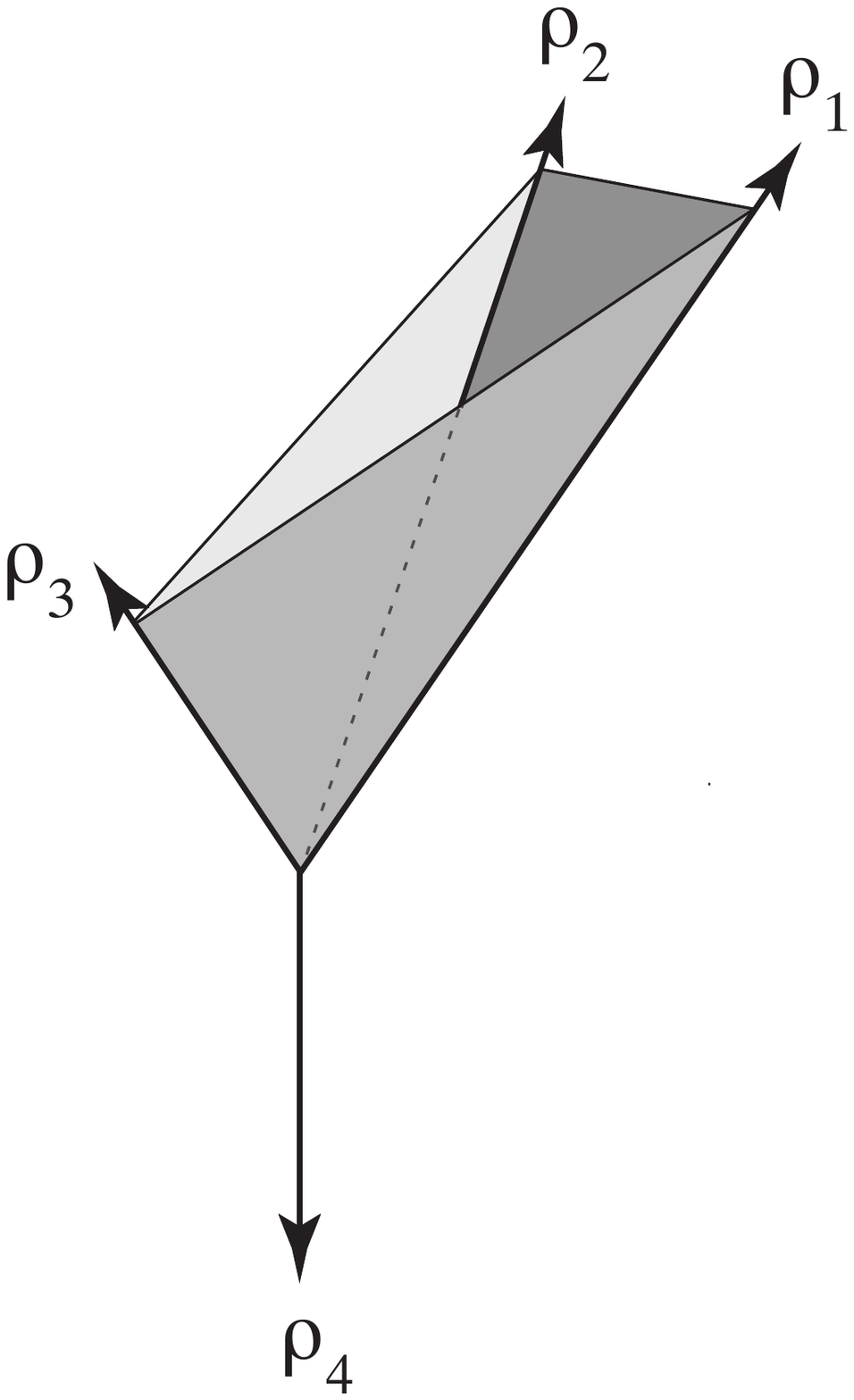}
\caption{Fans $\Sigma$ (left) and $\Delta$ (right) in $\R^3$ as discussed in Examples \ref{example:square based pyramid} and \ref{example:simplicial-distant}. In each case only one maximal cone is highlighted by shading.\label{figure:distant-sings}}
\end{figure}

\begin{example} \label{example:simplicial-distant}
Not all fans with a distant singular cone arise from the construction of Example \ref{example:square based pyramid}. Consider the fan $\Delta$ in $\R^3$ shown in Figure \ref{figure:distant-sings}. In more detail, $\Delta$ has four rays with primitive generators
\[
\rho_1=\left( \begin{array}{r}1\\0\\2\end{array}\right), \rho_2=\left( \begin{array}{r}0\\1\\2\end{array}\right), \rho_3=\left( \begin{array}{r}-1\\-1\\1\end{array}\right), \rho_4=\left( \begin{array}{r}0\\0\\-1\end{array}\right),
\]
and four maximal cones
\[
\langle\rho_1,\rho_2,\rho_3\rangle, \langle\rho_1,\rho_2,\rho_4\rangle, \langle\rho_1,\rho_3,\rho_4\rangle, \langle\rho_2,\rho_3,\rho_4\rangle.
\]
One checks that $\Delta$ is a singular, complete, simplical, polytopal fan which has distant singular cones. Thus Theorem \ref{thm:distant} applies, and we deduce that $K^*_T(X_\Delta)\cong P_K(\Delta)$ as $K^*_T$-algebras.
\end{example}

\begin{example}\label{example:two-isolated-singular-cones}
Consider the fan $\Sigma$ in $\R^3$ with twelve rays whose primitive generators are
\[
\rho_1=\left( \begin{array}{r}  1\\0\\1\end{array}\right),
\rho_2=\left( \begin{array}{r} 0\\1\\1\end{array}\right),
\rho_3=\left( \begin{array}{r} -1\\0\\1\end{array}\right),
\rho_4=\left( \begin{array}{r} 0\\-1\\1\end{array}\right),
\rho_5=\left( \begin{array}{r} 1\\0\\-1\end{array}\right),
\rho_6= \left( \begin{array}{r} 0\\1\\-1\end{array}\right),
\]
\[
\rho_7= \left( \begin{array}{r} -1\\0\\-1\end{array}\right),
\rho_8= \left( \begin{array}{r} 0\\-1\\-1\end{array}\right),
\rho_9= \left( \begin{array}{r} 1\\0\\0\end{array}\right),
\rho_{10}= \left( \begin{array}{r} 0\\1\\0\end{array}\right),
\rho_{11}= \left( \begin{array}{r} -1\\0\\0\end{array}\right),
\rho_{12}= \left( \begin{array}{r} 0\\-1\\0\end{array}\right),
\]
and eighteen maximal cones
\[
\sigma_1 = \langle \rho_1,\rho_2,\rho_3,\rho_4\rangle, 
\sigma_2 = \langle \rho_5,\rho_6,\rho_7,\rho_8\rangle, 
\sigma_3 = \langle \rho_1,\rho_2,\rho_9\rangle, 
\sigma_4 = \langle \rho_2,\rho_9,\rho_{10}\rangle, 
\sigma_5 = \langle \rho_2,\rho_3,\rho_{10}\rangle,
\]
\[
\sigma_6 = \langle \rho_3,\rho_{10},\rho_{11}\rangle,
\sigma_7 = \langle \rho_3,\rho_4,\rho_{11}\rangle, 
\sigma_8 = \langle \rho_4,\rho_{11},\rho_{12}\rangle, 
\sigma_9 = \langle \rho_1,\rho_4,\rho_{12}\rangle, 
\sigma_{10} = \langle \rho_1,\rho_9,\rho_{12}\rangle, 
\]
\[
\sigma_{11} = \langle \rho_5,\rho_6,\rho_9\rangle, 
\sigma_{12} = \langle \rho_6,\rho_9,\rho_{10}\rangle,
\sigma_{13} = \langle \rho_6,\rho_7,\rho_{10}\rangle, 
\sigma_{14} = \langle \rho_7,\rho_{10},\rho_{11}\rangle, 
\sigma_{15} = \langle \rho_7,\rho_8,\rho_{11}\rangle, 
\]
\[
\sigma_{16} = \langle \rho_8,\rho_{11},\rho_{12}\rangle, 
\sigma_{17} = \langle \rho_5,\rho_8,\rho_{12}\rangle,
\sigma_{18} = \langle \rho_5,\rho_9,\rho_{12}\rangle.
\]
\noindent The pictures in Figure \ref{figure:2-distant-singular-cones} may help in visualising $\Sigma$. One checks that $\Sigma$ is a singular, complete, non-simplical, non-polytopal fan with distant singular cones. Thus Theorem \ref{thm:distant} applies, and we deduce that $K^*_T(X_\Sigma)\cong P_K(\Sigma)$ as $K^*_T$-algebras.
\end{example}

\begin{figure}[htb]
\includegraphics[height=2in]{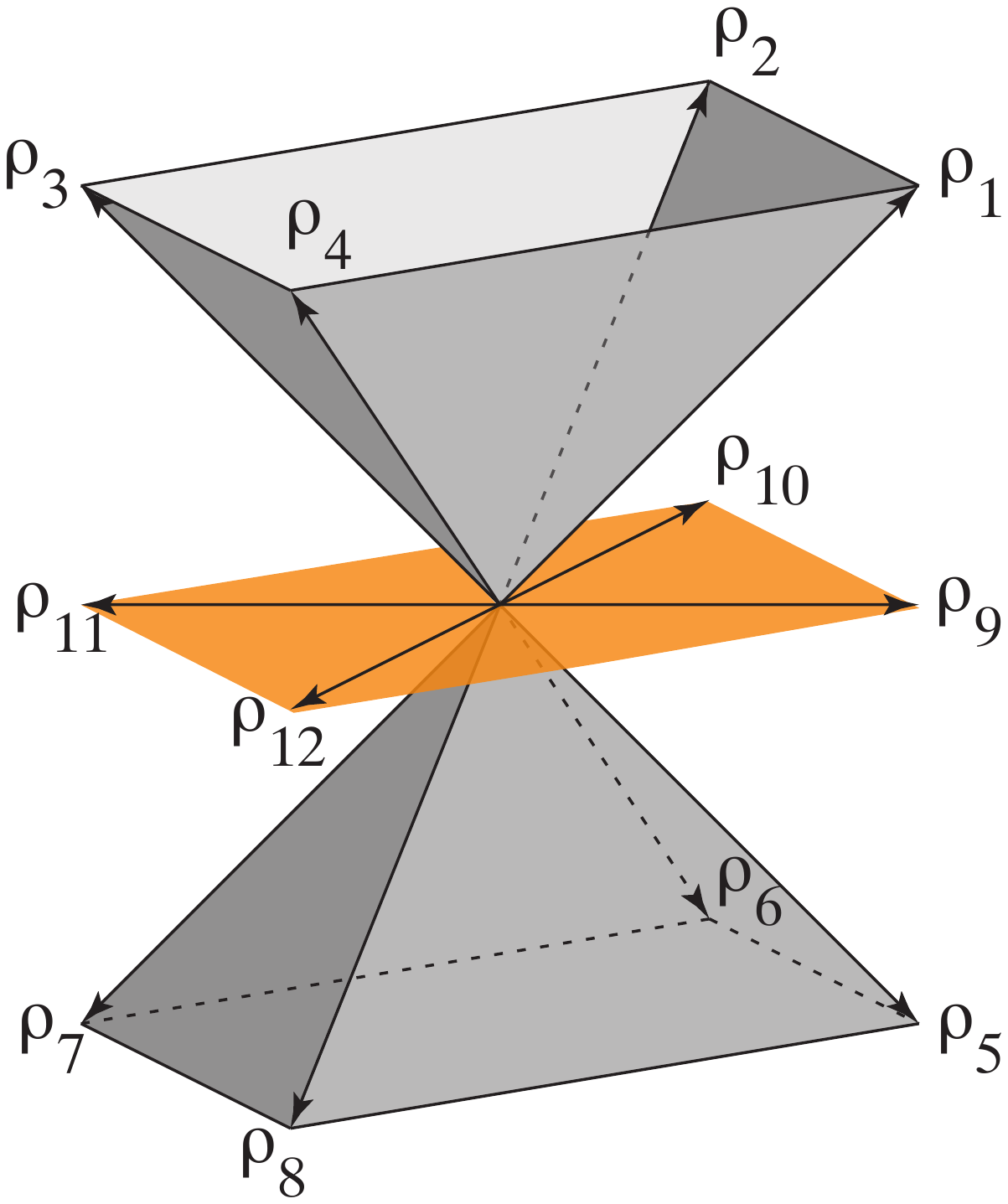} \hskip 0.5in \includegraphics[height=2in]{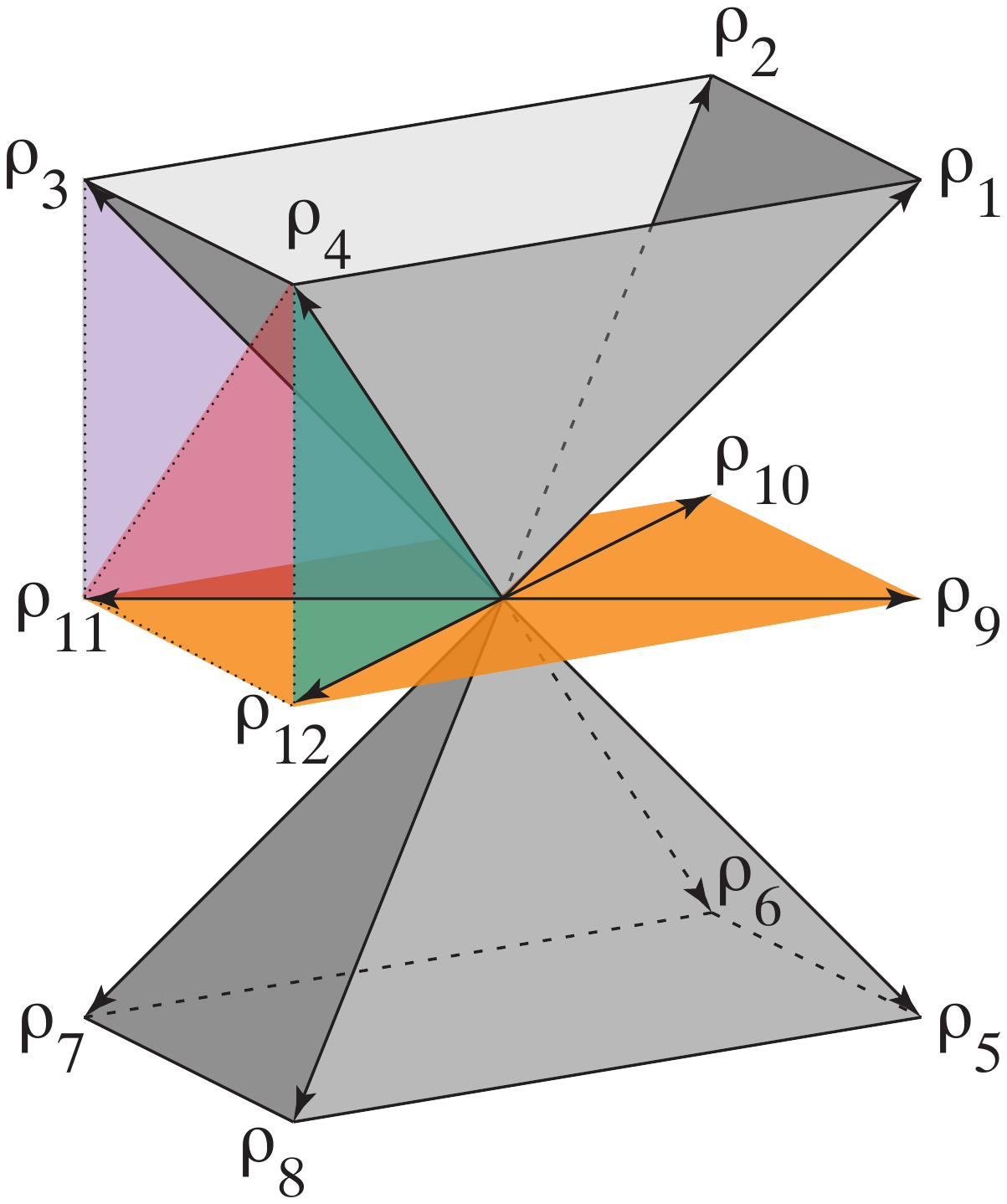}
\caption{The fan $\Sigma$ in $\R^3$ as discussed in Example \ref{example:two-isolated-singular-cones}. On the left we indicate the rays of $\Sigma$, two 3-cones and four 2-cones in the $xy$-plane. From this picture, the idea is to then produce a complete fan by adding smooth cones; the start of this process is indicated on the right.\label{figure:2-distant-singular-cones}}
\end{figure}

\begin{example}Consider the variation $\Sigma'$ on the fan $\Sigma$ of Example \ref{example:two-isolated-singular-cones} as follows. The rays of $\Sigma'$ are precisely the rays $\rho_1,\ldots,\rho_{12}$ of $\Sigma$, and we start as in the picture on the left in Example \ref{example:two-isolated-singular-cones}. However, rather than adding smooth cones, we add non-simplicial cones, each with four rays. If the red cone in the picture on the right in Figure \ref{figure:2-distant-singular-cones} were removed, the result would show the start of the process. Thus $\Sigma'$ has ten maximal cones, viz
\[
\sigma_1 = \langle \rho_1,\rho_4,\rho_9,\rho_{12}\rangle, 
\sigma_2 = \langle \rho_1,\rho_2,\rho_9,\rho_{10}\rangle, 
\sigma_3 = \langle \rho_2,\rho_3,\rho_{10},\rho_{11}\rangle, 
\sigma_4 = \langle \rho_3,\rho_4,\rho_{11},\rho_{12}\rangle, 
\]
\[
\sigma_5 = \langle \rho_5,\rho_8,\rho_9,\rho_{12}\rangle, 
\sigma_6 = \langle \rho_5,\rho_6,\rho_9,\rho_{10}\rangle, 
\sigma_7 = \langle \rho_6,\rho_7,\rho_{10},\rho_{11}\rangle, 
\sigma_8 = \langle \rho_7,\rho_8,\rho_{11},\rho_{12}\rangle, 
\]
\[
\sigma_9 = \langle \rho_1,\rho_2,\rho_3,\rho_4\rangle, 
\sigma_{10} = \langle \rho_7,\rho_8,\rho_9,\rho_{10}\rangle.
\]
\noindent One checks that $\Sigma'$ is a singular, complete, non-simplical, polytopal fan with isolated singular cones. However, the singular cones are {\em not} distant, so Theorem \ref{thm:distant} does not apply. Although we cannot say anything about $K^*_T(X_{\Sigma'})$, the fan $\Sigma'$ is of interest because it is a complete fan, and yet {\em every} maximal cone is an isolated singular cone because each proper face of each maximal cone is smooth.
\end{example}

\begin{example}
Toric degeneration is an important technique in algebraic geometry and representation theory.  It is a technique that starts with a variety and produces a family of varieties, at least one of which is a toric variety. One then hopes to use toric techniques on the toric variety to answer questions about the original variety. The Gelfand-Tsetlin system was first developed in the context of integrable systems \cite{gs:g-ts}. The connection to algebro geometric toric degeneration is described in \cite{nnu:g-ts}. In the simplest non-trivial example, the toric degeneration $X$ of $\mathscr{F}\ell ags(\C^3)$, has fan $\Sigma$ with a single (hence distant) singular cone.  This fan was explicitly described in \cite[\S 3.3]{pab:toric-degen}, and has rays
\[
\rho_1=\left( \begin{array}{r}-1\\0\\0\end{array}\right), \rho_2=\left( \begin{array}{r}1\\0\\0\end{array}\right), 
\rho_3=\left( \begin{array}{r}0\\-1\\0\end{array}\right), \rho_4=\left( \begin{array}{r}0\\1\\0\end{array}\right), 
\rho_5=\left( \begin{array}{r}1\\0\\-1\end{array}\right), \mbox{ and } \rho_6=\left( \begin{array}{r}0\\-1\\1\end{array}\right).
\]
It has maximal cones
\begin{gather*}
\sigma_1=\langle \rho_1,\rho_3,\rho_5\rangle , \sigma_2=\langle \rho_1,\rho_3,\rho_6\rangle , 
\sigma_3=\langle \rho_1,\rho_4,\rho_5\rangle , \sigma_4=\langle \rho_1,\rho_4,\rho_6\rangle ,\\
\sigma_5=\langle \rho_2,\rho_3,\rho_5,\rho_6\rangle , \sigma_6=\langle \rho_2,\rho_4,\rho_5\rangle , 
 \mbox{ and } \sigma_7=\langle \rho_2,\rho_4,\rho_6\rangle .
\end{gather*}
One checks that $\Sigma$ is a singular, complete, non-simplical, polytopal fan with distant singular cones; the single isolated singularity corresponds to the distant singular cone $\sigma_5$.  The geometry of this particular singularity is precisely that of \cite[Example 1.1.18]{colisc:tv}). Theorem~\ref{thm:distant} guarantees that this variety has $K^*_T(X)\cong P_K(\Sigma)$ as $K^*_T$-algebras.
\end{example}

\appendix
\section{The Lattice Ideal Lemma}\label{section:lil}

\noindent In Section \ref{section:R^2-complete} above, we associated to a lattice $L\leq\Z^s$ for some $s>0$, the \ouremph{lattice ideal} $J_L$ in the Laurent polynomial ring $\Z[x_1^{\pm1},\dots,x_s^{\pm1}]$. We also write $J_L$ for the lattice ideal of $L$ in the polynomial ring $\Z[x_1,\dots,x_s]$, where
\[
J_L=\langle x^u-x^v\mid u-v\in L, u,v\in\N^s\rangle.
\]
We write $J_L$ for the lattice ideal in either ring in what follows, taking care to be clear of the context.

If $\mathbb{L}=(\ell_{ij})$ is a matrix whose columns $\ell^1,\ldots,\ell^r$ are a $\Z$-basis for $L$, we write 
\[
J_\mathbb{L}=\left\langle \left(\prod\limits_{i~\text{with}~\ell_{ij}>0}x_i^{\ell_{ij}}\right)-\left(\prod\limits_{i~\text{with}~\ell_{ij}<0}x_i^{-\ell_{ij}}\right)\Bigg|\ 1\leq j\leq r\right\rangle.
\]
\noindent For notational convenience we write $\ell^j=\ell^j_+-\ell^j_-$ where the $i^{th}$ entry of $\ell^j_\epsilon\in\Z^s$ is $\epsilon\ell_{ij}$, for $\epsilon =+,-$. We may then write expressions in the form $x^{\ell^j_+}-x^{\ell^j_-}$.  Again,
$J_\mathbb{L}$  is an ideal in the polynomial ring $\Z[x_1,\dots,x_s]$ or in the Laurent polynomial
ring $\Z[x_1^{\pm1},\dots,x_s^{\pm1}]$, depending on the context. In the Laurent polynomial ring, it is clear that
\[
J_\mathbb{L}=
\left\langle 1-{x}^{\ell_j}~|~1\leq j\leq r\right\rangle,
\]
and so our notation here is consistent with that in Section \ref{section:R^2-complete}.

\begin{lemma}[The lattice ideal lemma for polynomial rings]
In the polynomial ring $\Z[x_1,\dots,x_s]$,
$J_L=J_\mathbb{L}{:}(x_1\cdots x_s)^\infty$, where the latter ideal is the \ouremph{saturation} of $J_\mathbb{L}$ with respect to $x_1\cdots x_s$, ie
\[
J_\mathbb{L}{:}(x_1\cdots x_s)^\infty=\left\{f\in\Z[x_1,\ldots,x_s]\ \bigg|\ f\cdot (x_1\cdots x_s)^N\in J_\mathbb{L}\mbox{ for some }N>0\right\}.
\]
\end{lemma}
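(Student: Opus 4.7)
The plan is to prove the two containments separately. For the easy direction $J_\mathbb{L} : (x_1\cdots x_s)^\infty \subseteq J_L$, I would first observe that $J_\mathbb{L} \subseteq J_L$, since each generator $x^{\ell^j_+} - x^{\ell^j_-}$ has exponent-difference $\ell^j \in L$ and is therefore a generator of $J_L$. This reduces the containment to showing $J_L$ is saturated with respect to $x_1\cdots x_s$. For this I would identify $J_L$ as the kernel of the ring homomorphism
\[
\varphi\colon \Z[x_1,\dots,x_s] \lra \Z[\Z^s/L], \qquad x_i \longmapsto \bar e_i,
\]
where $\bar e_i$ denotes the class of the $i^{\text{th}}$ standard basis vector in the abelian group $\Z^s/L$. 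Since $\Z^s/L$ is a group, $\bar e_i$ is a unit in the group ring with inverse $\overline{-e_i}$. Thus $\varphi(x_i)$ is a unit, hence a non-zero-divisor on $\Z[\Z^s/L]$, so $x_i$ is a non-zero-divisor modulo $J_L = \ker\varphi$. Consequently $J_L$ is saturated with respect to each $x_i$ individually, and therefore with respect to their product.

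For the reverse containment $J_L \subseteq J_\mathbb{L} : (x_1\cdots x_s)^\infty$, it suffices to verify that each binomial generator $x^u - x^v$ of $J_L$ (with $u,v \in \N^s$, $u-v \in L$) is annihilated by a power of $x_1\cdots x_s$ modulo $J_\mathbb{L}$. Writing $u-v = \sum_{j=1}^r c_j \ell^j$, the heart of the argument is the telescoping identity that, for any $\ell \in L$ written as $\ell = \ell_+ - \ell_-$ and any $w \in \N^s$ with $w \geq \ell_-$ componentwise,
\[
x^{w-\ell_-}\bigl(x^{\ell_+} - x^{\ell_-}\bigr) = x^{w+\ell} - x^{w} \in J_\mathbb{L}.
\]
I would define a path in $\Z^s$ from $v$ to $u$ whose consecutive steps are the signed generators $\pm\ell^j$ (each used $|c_j|$ times, with sign $\mathrm{sgn}(c_j)$), choose $N$ large enough that shifting the whole path by $(N,\dots,N)$ keeps all vertices in $\N^s$ and above the relevant $\ell^j_\pm$ componentwise, and then apply the telescoping identity at each step to exhibit
\[
(x_1\cdots x_s)^N\bigl(x^u - x^v\bigr) = x^{u+(N,\dots,N)} - x^{v+(N,\dots,N)} \in J_\mathbb{L}.
\]

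The main obstacle will be arranging the telescoping cleanly: choosing an ordering of the $|c_j|$ signed steps, producing a single value of $N$ that simultaneously keeps every intermediate vertex in $\N^s$ (and each $\geq$ the corresponding $\ell^j_-$), and verifying that consecutive terms cancel so the sum collapses to the desired endpoint difference. Once the path is set up correctly, the telescoping is a routine calculation; the saturation argument via $\varphi$ handles the other containment uniformly.
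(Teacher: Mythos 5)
Your proof is correct, and it differs from the paper's in instructive ways. For the containment $J_\mathbb{L}{:}(x_1\cdots x_s)^\infty \subseteq J_L$, the paper simply notes $J_\mathbb{L}\leq J_L$ and passes to saturations, leaving implicit the fact that $J_L$ is itself saturated with respect to $x_1\cdots x_s$; your argument via the group algebra $\Z[\Z^s/L]$, in which each $\varphi(x_i)$ is a unit, supplies exactly that missing justification. You should, however, record the (easy, standard) verification that $\ker\varphi\subseteq J_L$, since your saturation argument uses both inclusions of $J_L=\ker\varphi$: group the monomials of $f\in\ker\varphi$ by their class in $\Z^s/L$, observe that the coefficients within each class sum to zero, and rewrite each class as a $\Z$-combination of binomials $x^u-x^{u_0}$ with $u-u_0\in L$. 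For the reverse containment the two arguments share the same skeleton --- write $u-v=\sum_j c_j\ell^j$, multiply by a large power of $x_1\cdots x_s$ to clear denominators, and exhibit the resulting binomial in $J_\mathbb{L}$ --- but where the paper runs an induction on the number of basis vectors involved, via the somewhat heavy manipulation culminating in (\ref{equation:more-yuk}), you organize the cancellation as a lattice path from $v$ to $u$ with steps $\pm\ell^j$ and a one-line telescoping identity. Your version is the standard Eisenbud--Sturmfels/Miller--Sturmfels argument and is arguably cleaner; the uniform choice of $N$ that you flag as the main obstacle is indeed routine, as there are only finitely many path vertices and finitely many componentwise inequalities to satisfy.
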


\noindent This is \cite[Lemma 7.6]{ms:cca}, but as the full details of the proof are not given explicitly in \cite{ms:cca}, we provide them here.

\begin{proof}
Clearly $J_\mathbb{L}\leq J_L$ and hence $J_\mathbb{L}{:}(x_1\cdots x_s)^\infty\leq J_L$. For the converse we take a generator $x^u-x^v$ for $J_L$, so $u,v\in\N^s$ and $u-v\in L$. We shall show that $x^{u-v}-1\in J_\mathbb{L}{:}(x_1\cdots x_s)^\infty$.

Write $u-v=\sum\limits_{i=1}^ra_i\ell^i$ with $a_i\in\Z$. Then
\[
x^{u-v}-1=\prod\limits_{a_i>0}\left(\frac{x^{\ell^i_+}}{x^{\ell^i_-}}\right)^{a_i}\prod\limits_{a_i<0}\left(\frac{x^{\ell^i_-}}{x^{\ell^i_+}}\right)^{-a_i}-1
\]
and working with the saturation allows us to essentially clear denominators: for some $N>0$,
\[
x^N(x^{u-v}-1)
=
m\left(
\prod\limits_{a_i>0}\left(x^{\ell^i_+}\right)^{a_i}
\prod\limits_{a_i<0}\left(x^{\ell^i_-}\right)^{-a_i}
-
\prod\limits_{a_i>0}\left(x^{\ell^i_-}\right)^{a_i}
\prod\limits_{a_i<0}\left(x^{\ell^i_+}\right)^{-a_i}
\right),
\]
where $m$ is some monomial. It now suffices to show that
\begin{equation}\label{equation:yuk}
\prod\limits_{a_i>0}\left(x^{\ell^i_+}\right)^{a_i}
\prod\limits_{a_i<0}\left(x^{\ell^i_-}\right)^{-a_i}
-
\prod\limits_{a_i>0}\left(x^{\ell^i_-}\right)^{a_i}
\prod\limits_{a_i<0}\left(x^{\ell^i_+}\right)^{-a_i}
\end{equation}
is in $J_\mathbb{L}$, which we do by expressing it in terms of the generators of $J_\mathbb{L}$. We induct on the number of basis elements $\ell^1,\ldots,\ell^r$ involved in (\ref{equation:yuk}).

If only one basis element is involved, say $\ell^1$, and when $a_1>0$ the expression (\ref{equation:yuk}) may be written
\begin{equation}\label{equation:initial-step}
x^{a_1\ell^1_+}-x^{a_1\ell^1_-}=\left(x^{\ell^1_+}-x^{\ell^1_-}\right)\left((x^{\ell^1_+})^{a_1-1}+(x^{\ell^1_+})^{a_1-2}x^{\ell^1_-}+\cdots+x^{\ell^1_+}(x^{\ell^1_-})^{a_1-2}+(x^{\ell^1_-})^{a_1-1}\right)
\end{equation}
and hence is a polynomial multiple of $x^{\ell^1_+}-x^{\ell^1_-}$. The case $a_1<0$ is dealt with similarly. This completes the initial step of the induction.

Now suppose we may write (\ref{equation:yuk}) in terms of the generators of $J_\mathbb{L}$ whenever (\ref{equation:yuk}) involves no more that $k-1$ basis elements $\ell^1,\ldots,\ell^r$, and consider the situation in which $k$ basis elements are involved. Without loss, we may assume $\ell^1$ is involved. We also assume $a_1>0$, as the case $a_1<0$ is similar. Then (\ref{equation:yuk}) may be written as
\begin{eqnarray}
\nonumber&&\left(x^{a_1\ell^1_+}\prod\limits_{a_i>0,i\neq1}x^{a_i\ell^i_+}\prod\limits_{a_i<0}x^{-a_i\ell^i_-}\right)
-
\left(x^{a_1\ell^1_-}\prod\limits_{a_i>0,i\neq1}x^{a_i\ell^i_+}\prod\limits_{a_i<0}x^{-a_i\ell^i_-}\right)
\\
&&+
\left(x^{a_1\ell^1_-}\prod\limits_{a_i>0,i\neq1}x^{a_i\ell^i_+}\prod\limits_{a_i<0}x^{-a_i\ell^i_-}\right)
-\left(\prod\limits_{a_i>0}x^{a_i\ell^i_-}\prod\limits_{a_i<0}x^{-a_i\ell^i_+}\right)\label{equation:more-yuk}
\end{eqnarray}

The first two terms may be written as
\[
(x^{a_1\ell^1_+}-x^{a_1\ell^1_-})\prod\limits_{a_i>0,i\neq1}x^{a_i\ell^i_+}\prod\limits_{a_i<0}x^{-a_i\ell^i_-},
\]
which is, by (\ref{equation:initial-step}), a polynomial multiple of $x^{\ell^1_+}-x^{\ell^1_-}$. It now suffices to consider the final two terms of (\ref{equation:more-yuk}). Observe that
\begin{eqnarray*}
&&
\left(x^{a_1\ell^1_-}\prod\limits_{a_i>0,i\neq1}x^{a_i\ell^i_+}\prod\limits_{a_i<0}x^{-a_i\ell^i_-}\right)
-\left(\prod\limits_{a_i>0}x^{a_i\ell^i_-}\prod\limits_{a_i<0}x^{-a_i\ell^i_+}\right)\\
&=&
\left(x^{a_1\ell^1_-}\prod\limits_{a_i>0,i\neq1}x^{a_i\ell^i_+}\prod\limits_{a_i<0}x^{-a_i\ell^i_-}\right)
-\left(x^{a_1\ell^1_-}\prod\limits_{a_i>0,i\neq1}x^{a_i\ell^i_-}\prod\limits_{a_i<0}x^{-a_i\ell^i_+}\right)\\
&=&
x^{a_1\ell^1_-}\left(\left(\prod\limits_{a_i>0,i\neq1}x^{a_i\ell^i_+}\prod\limits_{a_i<0}x^{-a_i\ell^i_-}\right)
-\left(\prod\limits_{a_i>0,i\neq1}x^{a_i\ell^i_-}\prod\limits_{a_i<0}x^{-a_i\ell^i_+}\right)\right).
\end{eqnarray*}
Ignoring the factor $x^{a_1\ell^1_-}$, the remainder of the final line is of the form (\ref{equation:yuk}), but involving only $k-1$ basis elements. Hence by the inductive hypothesis, it may be expressed as a polynomial multiple of generators of $J_\mathbb{L}$. This completes the inductive step, and the proof.
\end{proof}

We note that in the Laurent polynomial ring
$\Z[x_1^{\pm1},\dots,x_s^{\pm1}]$, ideals are invariant under taking
saturation with respect to $x_1\cdots x_s$. Thus, we have an immediate corollary.

\begin{corollary}[The lattice ideal lemma for Laurent polynomial rings] \label{cor:lattice LPL}
In the Laurent polynomial ring $\Z[x_1^{\pm1},\dots,x_s^{\pm1}]$, we have $J_L=J_\mathbb{L}$.
\end{corollary}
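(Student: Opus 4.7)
The plan is to deduce the Laurent-ring statement directly from the polynomial-ring lemma just proved, by exploiting the fact that the passage from $\Z[x_1,\dots,x_s]$ to $\Z[x_1^{\pm1},\dots,x_s^{\pm1}]$ is the localization at the multiplicative set $S=\{(x_1\cdots x_s)^N:N\geq 0\}$, and that $S$-saturation becomes invisible after this localization.

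First I would record the trivial observation that for any ideal $I$ in $\Z[x_1^{\pm1},\dots,x_s^{\pm1}]$ one has $I:(x_1\cdots x_s)^\infty = I$: if $f\cdot(x_1\cdots x_s)^N\in I$, then since $x_1\cdots x_s$ is a unit, $f=(x_1\cdots x_s)^{-N}\cdot\bigl(f\cdot(x_1\cdots x_s)^N\bigr)\in I$.

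Next I would verify that the Laurent-ring ideals $J_L$ and $J_\mathbb{L}$ as defined in Section~\ref{section:R^2-complete} coincide with the extensions, along the localization map $\Z[x_1,\dots,x_s]\hookrightarrow \Z[x_1^{\pm1},\dots,x_s^{\pm1}]$, of the polynomial-ring ideals bearing the same names. For $J_\mathbb{L}$ this is essentially by inspection: the polynomial generator $x^{\ell^j_+}-x^{\ell^j_-}$ differs from the Laurent generator $1-x^{\ell^j}$ by the unit factor $x^{-\ell^j_-}$. For $J_L$, any Laurent-ring generator $x^u-x^v$ with $u,v\in\Z^s$ and $u-v\in L$ can be multiplied by a suitable monomial $x^w$ so that both $u+w$ and $v+w$ lie in $\N^s$, producing a polynomial-ring generator of $J_L$; conversely every polynomial-ring generator is, tautologically, a Laurent-ring generator. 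So extension and contraction along the localization match the two descriptions.

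Finally, since localization commutes with the ideal-theoretic operations involved and satisfies $S^{-1}(I:S^\infty)=S^{-1}I$, extending the identity $J_L=J_\mathbb{L}:(x_1\cdots x_s)^\infty$ from the polynomial ring to the Laurent ring yields $J_L=J_\mathbb{L}:(x_1\cdots x_s)^\infty$ in $\Z[x_1^{\pm1},\dots,x_s^{\pm1}]$, and the first observation collapses the right-hand side to $J_\mathbb{L}$. No step is a real obstacle here; the only care needed is in bookkeeping the polynomial-ring versus Laurent-ring meanings of the symbols $J_L$ and $J_\mathbb{L}$ so that one does not confuse the two versions at any stage.
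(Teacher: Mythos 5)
Your proposal is correct and follows essentially the same route as the paper, which deduces the corollary from the polynomial-ring lemma by observing that ideals in $\Z[x_1^{\pm1},\dots,x_s^{\pm1}]$ are invariant under saturation with respect to $x_1\cdots x_s$. You simply make explicit the bookkeeping (identifying the Laurent-ring ideals as extensions of the polynomial-ring ones along the localization) that the paper leaves implicit in calling the result ``immediate.''
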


Corollary~\ref{cor:lattice LPL} guarantees the following important relationship between sublattices and the corresponding lattice ideals when working in Laurent polynomial rings; the analogous statement for polynomial rings does not hold.

\begin{proposition}\label{prop:lattice subset}
Let $L$ and ${L'}$ be sublattices of $\Z^n$.  Then working in the Laurent polynomial ring 
$\Z[x_1^{\pm1},\dots,x_s^{\pm1}]$, we have
\[
L\leq {L'} \Longleftrightarrow J_L\leq J_{{L'}}.
\]
\end{proposition}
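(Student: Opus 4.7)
The plan is to prove the two directions separately, dispatching the easy one first and saving the work for the converse. The forward direction $L \leq L' \Rightarrow J_L \leq J_{L'}$ is immediate from the definition $J_L = \langle x^u - x^v \mid u-v \in L,\ u,v \in \Z^s \rangle$: any such generator has $u - v \in L \leq L'$, so it is already a generator of $J_{L'}$.

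For the converse $J_L \leq J_{L'} \Rightarrow L \leq L'$, I would take an arbitrary $\ell \in L$ and observe that $x^\ell - 1 = x^\ell - x^0$ is a generator of $J_L$ (since $\ell - 0 \in L$), hence lies in $J_{L'}$ by hypothesis. It thus suffices to show that the containment $x^\ell - 1 \in J_{L'}$ forces $\ell \in L'$. The cleanest route is to identify the Laurent polynomial ring $\Z[x_1^{\pm1}, \ldots, x_s^{\pm1}]$ with the integral group algebra $\Z[\Z^s]$ via $x^u \leftrightarrow [u]$. Under this identification, the generators of $J_{L'}$ are precisely the differences $[u] - [v]$ with $u - v \in L'$, so the quotient ring should be the group algebra $\Z[\Z^s/L']$ of the quotient group. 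This quotient is a free $\Z$-module on the cosets of $L'$; under the quotient map, $x^\ell - 1$ maps to $[\ell + L'] - [0 + L']$, which vanishes if and only if $\ell \in L'$.

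The main obstacle is verifying the identification $\Z[\Z^s]/J_{L'} \cong \Z[\Z^s/L']$. The surjection $\Z^s \twoheadrightarrow \Z^s/L'$ induces a surjection of group algebras whose kernel trivially contains $J_{L'}$. The reverse containment is the subtle part: given $\sum_i a_i x^{u_i}$ in the kernel, I would group the $u_i$ by their $L'$-cosets, use that within each coset the coefficients $a_i$ sum to zero, and then express each such subsum as a $\Z$-linear combination of binomials $x^{u_i} - x^{u_j}$ with $u_i, u_j$ lying in a common coset (so that $u_i - u_j \in L'$). Each such binomial is a generator of $J_{L'}$, completing the containment. I expect this step to be routine but to require a little bookkeeping; notably, it relies essentially on working in the Laurent rather than polynomial setting, since $\ell$ may have negative coordinates, and the paper's remark that the analogous polynomial-ring statement fails reflects exactly this asymmetry.
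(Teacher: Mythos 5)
Your proof is correct, but it takes a genuinely different route from the paper's. The paper establishes the chain $L\leq L' \Leftrightarrow J_L\leq J_{L'}$ in $\Z[x_1^{\pm1},\dots,x_s^{\pm1}] \Leftrightarrow J_L\leq J_{L'}$ in $\C[x_1^{\pm1},\dots,x_s^{\pm1}]$, and handles the crucial converse by passing to $\C$ and invoking the Eisenbud--Sturmfels structure theory of binomial ideals (\cite[Theorem 2.1(a)]{ES}), which recovers the lattice $L$ from any binomial generating set of its lattice ideal over a field. You instead stay over $\Z$ and argue through the group-algebra identification $\Z[x_1^{\pm1},\dots,x_s^{\pm1}]\cong\Z[\Z^s]$ and the quotient map onto $\Z[\Z^s/L']$, which is free as a $\Z$-module on the cosets of $L'$; this is more elementary and self-contained, and removes the need both for base change and for the external citation. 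One small economy is available to you: for the proposition itself you only need the easy containment $J_{L'}\subseteq\ker\bigl(\Z[\Z^s]\to\Z[\Z^s/L']\bigr)$, since once $x^\ell-1$ is known to map to $[\ell+L']-[0+L']=0$, freeness on the cosets already forces $\ell\in L'$; the coset-by-coset bookkeeping you flag as the subtle step (proving $\ker\subseteq J_{L'}$) is correct and routine, but not actually required here. Your closing observation that the argument depends essentially on working in the Laurent rather than the polynomial ring is exactly the point the paper makes as well.
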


\begin{proof}
In fact, we shall show that 
\[
L\leq {L'} \Longleftrightarrow J_L\leq J_{{L'}}\mbox{ in }\Z[x_1^{\pm1},\dots,x_s^{\pm1}]\Longleftrightarrow J_L\leq J_{{L'}}\mbox{ in }\C[x_1^{\pm1},\dots,x_s^{\pm1}].
\]

First, if $L\leq L'$, it follows immediately from the definition of a lattice ideal that $J_L\leq J_{L'}$ in $\Z[x_1^{\pm1},\dots,x_s^{\pm1}]$.

Second, suppose that $J_L\leq J_{L'}$ in $\Z[x_1^{\pm1},\dots,x_s^{\pm1}]$. Consider $J_L$ in $\C[x_1^{\pm1},\dots,x_s^{\pm1}]$; each of its generators is of the form $x^u-x^v$ where $u,v\in\Z^s$ and $u-v\in L$. But this is precisely the form of a generator of $J_L$ in $\Z[x_1^{\pm1},\dots,x_s^{\pm1}]$, so by supposition, $x^u-x^v\in J_{L'}$ in $\Z[x_1^{\pm1},\dots,x_s^{\pm1}]$. This means that $u-v\in L'$ and hence $x^u-x^v\in J_{L'}$ in $\C[x_1^{\pm1},\dots,x_s^{\pm1}]$. It follows that $J_L\leq J_{L'}$ in $\C[x_1^{\pm1},\dots,x_s^{\pm1}]$.

Third, and to complete the proof, we shall show that $J_L\leq J_{L'}$ in $\C[x_1^{\pm1},\dots,x_s^{\pm1}]$ implies $L\leq L'$. So suppose that $J_L\leq J_{L'}$ in $\C[x_1^{\pm1},\dots,x_s^{\pm1}]$. Since $J_L$ is a binomial ideal in a Laurent polynomial ring, we may choose a generating set with generators of the form $1-{x}^{\ell_j}$ for $j=1,\dots,r$, where $\ell_j\in\Z^s$. This in turn allows us to define a lattice in $\Z^s$, namely $\mathscr{L}_{J_L} := \mathrm{Span}(\ell_1,\dots,\ell_r)\leq \Z^s$. But by \cite[Theorem 2.1(a)]{ES}, this new lattice must in fact be the original,  $\mathscr{L}_{J_L}=L$. Now, since $J_L\leq J_{L'}$ in $\C[x_1^{\pm1},\dots,x_s^{\pm1}]$, we can choose our generating set of $J_L$ to be a subset of a generating set for $J_{L'}$. This guarantees that $\mathscr{L}_{J_L} \leq \mathscr{L}_{J_{L'}}$, which exactly means $L\leq L'$, as desired.
\end{proof}

\newpage

\section{{\tt Macaulay2} code for Example \ref{example:square based pyramid}}\label{M2 code}

\begin{verbatim}
restart
loadPackage "Polyhedra"
loadPackage "NormalToricVarieties"

--We give the rays as matrices; note they are columns
R1= matrix {{1},{0},{1}}
R2= matrix {{0},{1},{1}}
R3= matrix {{-1},{0},{1}}
R4= matrix {{0},{-1},{1}}
R5= matrix {{0},{0},{-1}}

--Set up the maximal cones, using posHull
C1 = posHull {R1,R2,R3,R4}
C2 = posHull {R1,R2,R5}
C3 = posHull {R1,R4,R5}
C4 = posHull {R2,R3,R5}
C5 = posHull {R3,R4,R5}

--Create the fan
F=fan C1
F=addCone(C2,F)
F=addCone(C3,F)
F=addCone(C4,F)
F=addCone(C5,F)

--Check whether maximal cones are smooth
isSmooth(C1)
isSmooth(C2)
isSmooth(C3)
isSmooth(C4)
isSmooth(C5)

--Verify that intersections of maximal cones with singular C1 are smooth
C12=intersection(C1,C2)
C13=intersection(C1,C3)
C14=intersection(C1,C4)
C15=intersection(C1,C5)
isSmooth(C12)
isSmooth(C13)
isSmooth(C14)
isSmooth(C15)

--Check things about the fan
isSmooth(F)
isComplete(F)
isSimplicial(F)
isPolytopal(F) 
\end{verbatim}


\begin{thebibliography}{99}

\bibitem{anpa:okt} Dave Anderson and Sam Payne,
\newblock Operational {$K$}-theory.
\newblock {\em Doc. Math.} 20:357--399, 2015.


\bibitem{AtiSeg04} Michael F. Atiyah and Graeme Segal,
\newblock Twisted $K$-theory. 
\newblock {\em Ukr. Mat. Visn.} 1 (2004), no. 3, 287--330; translation 
in {\em Ukr. Math. Bull.} 1 (2004), no. 3, 291--334.

\bibitem{BFR} Anthony Bahri, Matthias Franz and Nigel Ray, 
\newblock The equivariant cohomology ring of weighted projective space. 
\newblock {\em Math. Proc. Cambridge Philos. Soc.} 146 (2009), no. 2, 395--405.

\bibitem{BNSS}
Anthony Bahri, Dietrich Notbohm, Soumen Sarkar, and Jongbaek Song,
\newblock On integral cohomology of certain orbifolds.
\newblock {\tt arXiv:1711.01748}.

\bibitem{BSS}
Anthony Bahri, Soumen Sarkar and Jongbaek Song,
\newblock On the integral cohomology ring of toric orbifolds and singular toric
  varieties.
\newblock {\em Algebr. Geom. Topol.}, 17(6):3779--3810, 2017.

\bibitem{bri:spa} Michel Brion,
\newblock The structure of the polytope algebra. 
\newblock {\em Tohoku Math. J.} (2) 49 (1997), no. 1, 1--32.

\bibitem{colisc:tv} David A Cox, John B Little, and Henry K Schenck,
\newblock \emph{Toric Varieties}.
\newblock Graduate Studies in Mathematics Volume 124.
\newblock American Mathematical Society, 2011.

\bibitem{dup:fpv} Delphine Dupont, 
\newblock Faisceaux pervers sur les vari\'{e}t\'{e}s toriques lisses. 
\newblock {\em C. R. Math. Acad. Sci. Paris}, 348(15-16):853--856, 2010.

\bibitem{ES} David Eisenbud and Bernd Sturmfels,
\newblock Binomial ideals.
\newblock {\em Duke Math. J.}, 84(1):1--45, 1996.

\bibitem{MF} Matthias Franz,
\newblock Describing toric varieties and their equivariant cohomology.
\newblock {\em Colloq. Math.} 121 (2010), no. 1, 1--16.

\bibitem{ful:itv} William Fulton,
\newblock {\em Introduction to Toric Varieties}. 
\newblock Annals of Mathematics Studies 131, Princeton University 
Press, 1993.

\bibitem{gs:g-ts} Victor Guillemin and Shlomo Sternberg,
\newblock The Gel'fand-Cetlin system and quantization of the complex flag manifolds. 
\newblock {\em J. Funct. Anal.} 52 (1983), no. 1, 106--128. 

\bibitem{HHRW} Megumi Harada, Tara Holm, Nigel Ray and Gareth Williams,
\newblock The equivariant {$K$}-theory and cobordism rings of divisive weighted projective spaces.
\newblock {\em Tohoku Math. J. (2)} 68(4):487--513, 2016.

\bibitem{HaL} Megumi Harada and Gregory Landweber,
\newblock The {$K$}-theory of abelian symplectic quotients. 
\newblock {\em Mathematical Research Letters} 15(1):57--72, 2008.

\bibitem{kan:ctp} Tamafumi Kaneyama,
\newblock Torus-equivariant vector bundles on projective spaces. 
\newblock {\em Nagoya Math. J.} 111 (1988), 25--40.

\bibitem{ak:fwps} Alexander M. Kasprzyk,
\newblock Bounds on fake weighted projective space.
\newblock {\em Kodai Math. J.} 32 (2009), 197--208.

\bibitem{kly:ebt} Alexander A. Klyachko, 
\newblock Equivariant bundles over toric varieties. (Russian)
\newblock \emph{Izv. Akad. Nauk SSSR Ser. Mat.} 53 (1989), no. 5,
1001--1039, 1135; translation in \emph{Math. USSR-Izv.} 35 (1990), 
no. 2, 337--375.

\bibitem{liya:ste} Chen-Hao Liu and Shing-Tung Yau, 
\newblock On the splitting type of an equivariant vector bundle over a toric manifold. 
\newblock {Preprint \tt arXiv:math/0002031}.

\bibitem{ms:cca}Ezra Miller and Bernd Sturmfels,
\newblock {\em Combinatorial Commutative Algebra}. 
\newblock Springer, New York, 2005.

\bibitem{mor:ktt} Robert Morelli, 
\newblock The $K$-theory of a toric variety. 
\newblock {\em Adv. Math.} 100 (1993), no. 2, 154--182.

\bibitem{nnu:g-ts} Takeo Nishinou, Yuichi Nohara, and Kazushi Ueda,
\newblock Toric degenerations of Gelfand--Cetlin systems and potential functions.
\newblock {\em Adv. Math.} 224 (2010), 648--706.

\bibitem{oda:cba} Tadao Oda,
\newblock {\em Convex Bodies and Algebraic Geometry: An Introduction to the Theory of Toric Varieties}. 
\newblock Springer, New York, 1988.

\bibitem{pab:toric-degen} Milena Pabiniak,
\newblock Displacing (Lagrangian) submanifolds in the manifolds of full flags.
\newblock {\em Adv. Geom.} 15 (2015) No.\ 1,101--108.

\bibitem{pay:tvb} Sam Payne,
\newblock Toric vector bundles, branched covers of fans, and the resolution property. 
\newblock {\em J. Algebraic Geom.} 18 (2009), no. 1, 1--36.

\bibitem{Seg68} Graeme Segal, 
\newblock Equivariant {$K$}-theory.
\newblock {\em Publications Math{\'e}matiques de l'Institut des Hautes 
{\'E}tudes Scientifiques} 34:129--151, 1968.

\bibitem{Thomason}
R.~W. Thomason,
\newblock Comparison of equivariant algebraic and topological {$K$}-theory.
\newblock {\em Duke Math. J.}, 53(3):795--825, 1986.

\bibitem{vevi:hak} Gabriele Vezzosi and Angelo Vistoli,
\newblock Higher algebraic {$K$}-theory for actions of diagonalizable 
groups. 
\newblock {\em Invent. Math.} 153 (2003), no. 1, 1--44.
\end{thebibliography}
\end{document}